\documentclass[final,reqno]{elsarticle}
\setlength{\textwidth}{5.9in}
\setlength{\oddsidemargin}{0.3in}
\setlength{\evensidemargin}{0.3in}
\setlength{\textheight}{8.5in}
\setlength{\topmargin}{0pt}
\setlength{\parindent}{15pt}
\setlength{\parskip}{5pt}
%
%	Temporary
%\setlength{\oddsidemargin}{-2.25cm}
%\setlength{\evensidemargin}{-2.25cm}
%\setlength{\topmargin}{-3.5cm}
%
\usepackage{natbib}
\bibliographystyle{unsrtnat}

\usepackage{mathrsfs}
\usepackage{amssymb,amsmath,amsfonts,latexsym}
\usepackage{amsthm}
\usepackage{graphicx,float,epsfig,color,fancyhdr}
\usepackage{colortbl}
\usepackage[colorlinks=false, linkcolor=blue,  citecolor=OliveGreen,  pdfstartview={}{}]{hyperref} 
%\usepackage[utf8]{inputenc}
%\UseRawInputEncoding
\usepackage[dvipsnames]{xcolor}

\newtheorem{lemma}{Lemma}[section]
\newtheorem{remark}{Remark}[section]

\newtheorem{theorem}{Theorem}[section]

\newtheorem{problem}{Problem}

%
%-------	Macros
%
\def\CE{\mathcal{E}}
\def\CM{\mathcal{X}}
\def\CN{\mathcal{Y}}

\def\CT{\mathcal{T}}
\def\E{K}
\def\G{\Gamma}
\def\Go{\G_0}
\def\HusE{H^{1+s}(\E)}
\def\HusO{H^{1+s}(\O)}
\def\HuE{H^1(\E)}
\def\HuGoO{H^1_{\Go}(\O)}
\def\HuO{H^1(\O)}
\def\HurO{H^{1+r}(\O)}

\def\HusO{H^{1+s}(\O)}

\def\LO{L^2(\O)}
\def\N{\mathbb{N}}
\def\O{\Omega}
\def\P{\mathbb{P}}
\def\R{\mathbb{R}}
\def\dim{\mathop{\mathrm{\,dim}}\nolimits}

\def\g{\gamma}
\def\ha{\widehat{\boldsymbol{a}}}

\def\hdel{\widehat{\delta}}

\def\l{\lambda}
\def\PiK{\Pi^{\E}}

\def\sp{\mathop{\mathrm{sp}}\nolimits}

\def\Vh{V_h}
\def\VK{V^{\E}}
\newcommand{\vertiii}[1]{{\left\vert\kern-0.25ex\left\vert\kern-0.25ex\left\vert #1 
    \right\vert\kern-0.25ex\right\vert\kern-0.25ex\right\vert}}
%\allowdisplaybreaks
\journal{}
\date{\today}

%\usepackage{showkeys}
%\usepackage{refcheck}

%-----------------------------------------------------------------------

\begin{document}
\begin{frontmatter}

\title{A virtual element method for the Steklov
eigenvalue problem allowing small edges.}

\author[1]{Felipe Lepe}
\ead{flepe@ubiobio.cl}
\address[1]{Departamento de Matem\'atica,
Universidad del B\'io-B\'io, Casilla 5-C, Concepci\'on, Chile.}
\author[1,2]{David Mora}
\ead{dmora@ubiobio.cl}
\address[2]{CI$^2$MA, Universidad de Concepci\'on, Casilla 160-C, Concepci\'on, Chile.}
\author[3]{Gonzalo Rivera}
\ead{gonzalo.rivera@ulagos.cl}
\address[3]{Departamento de Ciencias Exactas, Universidad de Los Lagos,
Casilla 933, Osorno, Chile.}
\author[2]{Iv\'an Vel\'asquez}
\ead{ivelasquez@ci2ma.udec.cl}
%\address[4]{CI$^2$MA, Departamento de Ingenier\'ia Matem\'atica,
%Universidad de Concepci\'on, Concepci\'on, Chile.}

\begin{abstract} 
The aim of this paper is to analyze the influence 
of small edges in the computation 
of the spectrum of the Steklov eigenvalue problem by a lowest order
virtual element method.
Under weaker assumptions on the polygonal meshes,
which can permit arbitrarily small edges with respect
to the element diameter,
we show that the scheme provides a correct approximation
of the spectrum and prove optimal error estimates
for the eigenfunctions and a double order for the eigenvalues.
Finally, we report some numerical tests
supporting the theoretical results.

%we will prove that the proposed method
%does not introduce spurious eigenvalues. From the
%classic theory of compact operators we will derive
%the correct approximation of the spectrum and hence,
%error estimates for the eigenfunctions and eigenvalues.

\end{abstract}

\begin{keyword} 
Virtual element method 
\sep Steklov eigenvalue problem 
\sep error estimates
\sep small edges
\MSC 35Q35 \sep 65N15 \sep 65N25 \sep 65N30 \sep 76B15.
\end{keyword}

\end{frontmatter}

%-----------------------------------------------------------------------

\setcounter{equation}{0}
\section{Introduction}

In this paper we are interested in the approximation
by virtual elements of the eigenvalues and eigenfunctions
of the Steklov problem which is characterized by
the presence of the eigenvalue on the boundary condition. 
This problem has attracted much attention
in recent years due to the important applications
in many physical subjects.
For instance, it appears in the study of the dynamic
of liquids in moving containers, the so called
sloshing problem \cite{BRS03, CM79, CY96}.
Also, this problem have interesting applications
in inverse scattering \cite{LSTJSC19},
among other.

%the particularity of have in the
%boundary condition the eigenvalue. To study this problem, the techniques
%for the Laplace operator are useful. This problem
%is relevant since due its applications in many
%physical subjects, like the dynamic of liquids
%in moving containers, the so called sloshing
%problem \cite{BRS03, CM79, CY96}. Also, this
%problem have interesting applications in one
%dimensional problems like the vibration of
%pendulums \cite{ahn}..... \cite{LSTJSC19}. 

There are several studies on the finite element approximations
of the Steklov eigenvalue problem, for example, see
\cite{A_M2AN2004, AP_APNUM2009, BrOs1972, GM-ima2011, LLX_apmath2013, YLL_apnum2009, XIMA13}.
Traditionally, finite element methods rely on triangular (simplicial)
and quadrilateral meshes. However, in complex simulations
one often encounters general polygonal and polyhedral meshes.
In recent years there has been a significant growth
in the mathematical and engineering literature in developing
numerical methods that can make use of general polytopal
meshes; among the large number of papers on this subject, we cite as a minimal sample
\cite{BBCMMR2013,BLMbook2014,CGH14,DDbook2020,RW,ST04}.

%and several finite element approximation
%have been proposed
%in order to compute the solutions of this system.
%We refer to \cite{A_M2AN2004, AP_APNUM2009, BrOs1972,
%GM-ima2011, LLX_apmath2013, YLL_apnum2009, XIMA13} just to mention
%some previous FEM methods where a priori and a
%posteriori error estimates have been derived.

%The aim of the present paper is to introduce and analyze a
%virtual element method (VEM) to solve the fourth
%order plate buckling problem.

The VEM has been introduced in \cite{BBCMMR2013}
and has been applied successfully in a large
range of problems arising from engineering and
physics phenomenons; see for instance \cite{AN2020,ABSV2016,BDR2017,
BLM2015,BLV-M2AN,BBBPS2016,CG2017,CGPS2017,MPP2018,PPR15}.
Regarding VEM for eigenvalue problems, we mention the following recent works
\cite{CGMMV,DrAA2011,GMV, GV, MM, MR2019, MRR1, MRR2, MV2}.
In particular, an a priori and a posteriori VEM discretization
for the Steklov eigenvalue problem has been presented in \cite{MRR1,MRR2}.
However, the theoretical results and error estimates for the
eigenvalues and eigenfunctions were obtained
under the standard mesh assumptions introduced
in \cite{BBCMMR2013}, which do not allow
to consider meshes containing elements with small edges
compared to the element diameter.

In \cite{BLR17,BM20,BrS} has been recently
analyzed the possibility to consider in VEM discretizations
arbitrarily small edges with respect to the element diameter.
Here, we will follow the VEM approach presented in \cite{BLR17},
for the Poisson equation, to write a lowest order virtual scheme
for the Steklov eigenvalue problem which permit arbitrarily
small edges in the polygonal meshes.
This can be useful in adaptive schemes
by considering refined meshes as a tool to
handle solutions with corner singularities.

More precisely, we will propose a
virtual element method of lowest order to solve the Steklov
eigenproblem, allowing small edges in the polygons of the mesh. We will consider the continuous
variational formulation presented in \cite{MRR1};
however, we will write a different discrete
virtual scheme, which is based on a different
stabilization bilinear form (see \cite{WRR}).
We will use the so-called Babu\v ska-Osborn abstract
spectral approximation theory (see \cite{BO}), to show
that under weaker assumptions on the polygonal meshes,
the resulting virtual element scheme provides a correct
approximation of the spectrum and prove optimal order
error estimates for the eigenfunctions and a double
order for the eigenvalues.
In particular, our theoretical estimates fully support 
meshes with arbitrarily small edges with respect to
the element diameter. In addition, we remark
that spurious modes were not found for different values
of the a scaled stabilization (see in particular
Section~\ref{sectionslosh} below).
%the present method does not introduce spurious modes.}
Moreover, the present work can be seen as a stepping
stone towards the more challenging eigenvalue problems.

%In particular for the Steklov spectral problem,
%since the solution operator results to be compact,
%the spectral analysis lies in the classic theory
%of \cite{BO}, where the convergence in norm between
%the continuous and discrete solution operators
%holds when the mesh size goes to zero. This is
%a key point to establish the spectral convergence
%and hence, the convergence of our proposed
%numerical method. Is in this approximation
%result where the small edges assumptions plays
%the lead role of our analysis, since this
%convergence in norm will depend of suitable
%approximation properties that appear naturally
%due the small edges hypothesis, which are well
%establish in \cite{BLR17}.

%With these results at hand, our aim is to
%permit arbitrarily small edges with respect
%to the element diameter on the polygonal
%meshes in solving eigenvalue problems.
%This can be useful in adaptive schemes
%by considering refined meshes as a tool to
%handle solutions with corner singularities.
%As a stepping stone towards the more
%challenging eigenproblems, we begin
%with the Steklov eigenvalue problem
%due to the simplicity of its variational formulation.
%We will show that the results obtained in \cite{BLR17}
%can be adapted to the present framework.

The paper is organized as follows: In Section~\ref{SEC:STAT},
we present the model problem and preliminary 
results related to the solution operator and
eigenfunctions. More precisely, we will establish
the spectral characterization
of the solution operator, which allows to study
the numerical method. Section~\ref{SEC:DISCRETE}
is dedicated to present the 
virtual element method. Here we will introduce
the assumptions on the mesh.
%, which as we mentioned before,
%will be mor relaxed compared with those on the classic
%literature of VEM.
We will present approximation results that will be
the key point of our analysis, which will depend on
the particular choice of the stabilization form.
Section~\ref{SEC:approximation}, contains the error estimates
for the eigenfunctions and a double order for the eigenvalues.
Finally, in Section~\ref{sec:numerics}
we present some numerical results on different families
of polygonal meshes with small edges, in order
to confirm the theoretical rates of convergence proved in the paper
and to confirm that it is not polluted with spurious modes.

%we will analyze the approximation of the spectrum,
%where we resort to the classic theory for compact
%operators
%. Section~\ref{SEC:BUCKL:SPEC}
%and finally in Section~\ref{sec:numerics},
%we present a series of numerical tests where 
%we evidenciate that small edges
%the
%smallness of the edges on the mesh are considered,
%where we evidenciate that small edges 
%in order to approximate the solutions of the Steklov spectral problem.

Throughout the article we will use standard notations
for Sobolev spaces, norms and seminorms. Moreover, we
will denote by $C$ a generic constant independent of
the mesh parameter $h$, which may take different values in
different occurrences.

\label{SEC:INTR}

%-----------------------------------------------------------------------

\setcounter{equation}{0}
\section{The spectral problem}
\label{SEC:STAT}

Let $\O\subset\R^2$ be a bounded domain with polygonal boundary
$\partial\O$. Let $\Go$ and $\G_1$ be disjoint open subsets of
$\partial\O$ such that $\partial\O=\bar{\G}_0\cup\bar{\G}_1$ and
$\left|\Go\right|\ne0$. We denote by $n$ the outward unit normal vector
to $\partial\O$ and by $\partial_n$ the normal derivative.

In what follows, we recall the variational formulation of the
Steklov eigenvalue problem proposed
in \cite{MRR1}. Also, we summarize some results from this reference.

The  Steklov eigenvalue problem reads as follows:
Find $(\l,u)\in\R\times\HuO$, $u\ne0$, such that
$$
\left\{\begin{array}{l}
\Delta u=0\quad\text{in }\O,
\\[0.1cm]
\partial_n u
=\left\{\begin{array}{ll}
\l u & \text{on }\Go,
\\
0 & \text{on }\G_1.
\end{array}\right.
\end{array}\right. 
$$
where $\partial_n u$ denotes the normal derivative of $u$.
By testing the first equation above with $v\in\HuO$ and integrating by
parts, we arrive at the following equivalent variational formulation:

\begin{problem}
\label{P1}
Find $(\l,u)\in\R\times\HuO$, $u\ne0$, such that
$$
\int_{\O}\nabla u\cdot\nabla v
=\l\int_{\Go}uv\qquad\forall v\in\HuO.
$$
\end{problem}

Observe that the left-hand side is not $\HuO$-elliptic. A remedy for this 
is  to use a shift argument to rewrite Problem \ref{P1}
in the following form: 
\begin{problem}
\label{P2}
Find $(\l,u)\in\R\times\HuO$, $u\ne0$, such that
$$
\ha(u,v)
=\left(\l+1\right)b(u,v)
\qquad\forall v\in\HuO,
$$
\end{problem}
where the bilinear form
$\ha:\HuO\times \HuO\rightarrow\mathbb{R}$
is defined by
\begin{align*}
\ha(u,v)
& :=a(u,v)+b(u,v)
\qquad u,v\in\HuO,
\end{align*}
with  $a,b:\HuO\times\HuO\rightarrow\mathbb{R}$ defined by 
\begin{equation*}
a(u,v)
 :=\int_{\O}\nabla u\cdot\nabla v,
\qquad b(u,v)
 :=\int_{\Go}uv
\qquad u,v\in\HuO.
\end{equation*}
All the previous bilinear forms are bounded and symmetric.
In addition, the next result, proved in \cite[Lemma 2.1]{MRR1},
establishes that $\ha(v,v)$ is $\HuO$-elliptic.
%the well-posedness of problem \eqref{T1}.
\begin{lemma}
\label{ha-elipt}
There exists a constant $\alpha>0$, depending on $\O$, such that
$$
\ha(v,v)
\ge\alpha\left\|v\right\|_{1,\O}^2
\qquad\forall v\in\HuO.
$$
\end{lemma}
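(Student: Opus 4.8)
The plan is to establish coercivity by a compactness (contradiction) argument. The starting observation is that, by definition, $\ha(v,v)=a(v,v)+b(v,v)=\left|v\right|_{1,\O}^2+\left\|v\right\|_{0,\Go}^2$, so the bilinear form controls simultaneously the full gradient seminorm and the $\LGo$-norm of the trace. The seminorm alone fails to be a norm because constants lie in its kernel; the role of the boundary term on $\Go$ is precisely to eliminate those constants, and this is where the hypothesis $\left|\Go\right|\neq0$ becomes essential.

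First I would argue by contradiction, assuming that no constant $\alpha>0$ satisfies the stated bound. Then there exists a sequence $\{v_n\}\subset\HuO$ with $\left\|v_n\right\|_{1,\O}=1$ for every $n$ and $\ha(v_n,v_n)\to0$. In view of the identity above, this forces both $\left|v_n\right|_{1,\O}\to0$ and $\left\|v_n\right\|_{0,\Go}\to0$.

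Next, since $\{v_n\}$ is bounded in $\HuO$, I would extract a subsequence (not relabeled) converging weakly in $\HuO$ to some $v$, strongly in $\LO$ by the Rellich compact embedding $\HuO\hookrightarrow\LO$, and with traces converging strongly in $\LGo$ by the compactness of the trace operator from $\HuO$ into $L^2(\partial\O)$. Weak lower semicontinuity of the seminorm yields $\left|v\right|_{1,\O}\le\liminf\left|v_n\right|_{1,\O}=0$, so $\nabla v=0$ and $v$ is constant on $\O$; the strong trace convergence gives $\left\|v\right\|_{0,\Go}=\lim\left\|v_n\right\|_{0,\Go}=0$. Because $\left|\Go\right|\neq0$, a constant whose trace vanishes on $\Go$ must itself vanish, hence $v=0$. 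The contradiction then follows from the strong $\LO$ convergence, which gives $\left\|v_n\right\|_{0,\O}\to\left\|v\right\|_{0,\O}=0$; combined with $\left|v_n\right|_{1,\O}\to0$ this yields $\left\|v_n\right\|_{1,\O}^2=\left|v_n\right|_{1,\O}^2+\left\|v_n\right\|_{0,\O}^2\to0$, contradicting $\left\|v_n\right\|_{1,\O}=1$.

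The step I expect to be the main obstacle is the one carrying the real analytic content, namely justifying that the $\LGo$ control suffices to rule out nonzero constants; equivalently, the argument is a proof of a generalized Poincaré--Friedrichs inequality, and the delicate ingredients are the compactness of the trace map into $L^2(\partial\O)$ and the essential use of $\left|\Go\right|\neq0$. An alternative, more direct route would invoke a generalized Poincaré inequality of the form $\left\|v\right\|_{0,\O}\le C\big(\left|v\right|_{1,\O}+\left|\int_{\Go}v\right|\big)$, bound $\left|\int_{\Go}v\right|\le\left|\Go\right|^{1/2}\left\|v\right\|_{0,\Go}$ by Cauchy--Schwarz, and conclude $\left\|v\right\|_{0,\O}^2\le C'\,\ha(v,v)$, whence $\left\|v\right\|_{1,\O}^2\le(1+C')\,\ha(v,v)$; but that Poincaré inequality is itself proved by the same compactness argument, so I would favor the self-contained version above.
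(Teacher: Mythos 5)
Your proof is correct. Note, however, that the paper does not prove this lemma at all: it simply cites \cite[Lemma~2.1]{MRR1}, where ellipticity is obtained by invoking a known generalized Poincar\'e inequality, $\left\|v\right\|_{1,\O}^2\le C\bigl(\left|v\right|_{1,\O}^2+\bigl(\int_{\Go}v\bigr)^2\bigr)$, together with Cauchy--Schwarz --- exactly the ``alternative, more direct route'' you sketch at the end. What you do differently is unpack that black box: your contradiction argument via Rellich compactness and trace compactness is precisely the standard (Peetre--Tartar style) proof of that inequality, so your version is self-contained where the paper's is a citation. The trade-offs are the usual ones: the citation is short and inherits its constant from a known result, while your argument needs no external lemma but produces a non-constructive $\alpha$. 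Two small refinements to your writeup: (i) you implicitly use that $\O$ is connected when concluding that $\nabla v=0$ forces $v$ to be constant (true here, since ``domain'' means connected, but worth saying, as the lemma is false for a disconnected $\O$ with a component not meeting $\Go$); (ii) you do not actually need compactness of the trace operator --- since $\left|v_n\right|_{1,\O}\to0$ and $v_n\to v$ strongly in $\LO$ by Rellich, the convergence $v_n\to v$ is strong in $\HuO$, and continuity of the trace already gives $\left\|v\right\|_{0,\Go}=\lim\left\|v_n\right\|_{0,\Go}=0$.
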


Next, we define the solution operator associated with Problem~\ref{P2}:
\begin{align*}
T:\ \HuO & \longrightarrow\HuO,
\\
f & \longmapsto Tf:=w,
\end{align*}
where $w\in\HuO$ is the unique solution (as a consequence
of  Lemma~\ref{ha-elipt} and the Lax-Milgram Theorem)
of the following source problem:
\begin{equation}
\label{T1}
\ha(w,v)=b(f,v)
\qquad\forall v\in\HuO.
\end{equation}

%
%\begin{proof}
%The result follows immediately from the generalized Poincar\'e
%inequality. 
%\end{proof}

%We deduce from Lemma~\ref{ha-elipt} that

Thus, the linear operator $T$ is well
defined and bounded. Also, $T$ is self-adjoint with respect to
the inner product $\ha(\cdot,\cdot)$ in $\HuO$ (see \cite[Section 2]{MRR1}). 

Notice that $(\l,u)\in\R\times\HuO$ solves
Problem~\ref{P2} (and hence Problem~\ref{P1}) if and only if $Tu=\mu u$
with $\mu\neq0$ and $u\ne0$, in which case $\mu:=1/(1+\l)$.

The following additional regularity result for the solution of
problem~\eqref{T1} and consequently, for the eigenfunctions of $T$, has been
proved in \cite[Lemma 2.2]{MRR1}.

\begin{lemma}
\label{LEM:REG} 
%There exists $r_{\O}\in(1/2,1]$ such that the following results hold: 
\begin{itemize}
\item[i)] for all $f\in\HuO$, there exist $r\in(1/2,1]$ and $C>0$ such that the
solution $w$ of problem~\eqref{T1} satisfies $w\in\HurO$
and
$$
\left\|w\right\|_{1+r,\O}
\le C\left\|f\right\|_{1,\O};
$$
\item[ii)] if $u$ is an eigenfunction of Problem~\ref{P1} with
eigenvalue $\l$, there exist $r>1/2$ and $C>0$ (depending on $\l$) such that
$u\in\HurO$ and
$$
\left\|u\right\|_{1+r,\O}
\le C\left\|u\right\|_{1,\O}.
$$
\end{itemize}
\end{lemma}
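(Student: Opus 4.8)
The plan is to establish part (i) first, since it carries all the analytic content, and then to obtain part (ii) at once from the spectral relation $Tu=\mu u$. For part (i), the first step is to identify the strong form of the source problem \eqref{T1}. Testing against $v$ with compact support in $\O$ shows that $w$ is harmonic, while integrating by parts against an arbitrary $v\in\HuO$ recovers the natural boundary conditions $\partial_n w + w = f$ on $\Go$ and $\partial_n w = 0$ on $\G_1$. Hence $w$ solves a mixed Neumann/Robin problem for the Laplacian on the polygon $\O$, driven by the trace $f|_{\Go}$, which lies in $H^{1/2}(\Go)$ because $f\in\HuO$.

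The core step is then to invoke the regularity theory for second order elliptic problems on plane polygonal domains (Grisvard-type results). The maximal attainable Sobolev exponent is governed by the singular exponents at the corners of $\O$ and at the points where the type of boundary condition changes, that is, at the junctions between $\bar\Go$ and $\bar\G_1$. Because the Robin term $w$ on $\Go$ is a lower order perturbation of a Neumann condition, the leading corner singularities behave as in the Neumann/Neumann and Neumann/Dirichlet configurations, whose exponents are bounded strictly below by $1/2$ away from the degenerate angle. This yields the existence of some $r\in(1/2,1]$, depending only on $\O$ and on the partition $\partial\O=\bar\Go\cup\bar\G_1$, for which $w\in\HurO$ together with the a priori estimate $\|w\|_{1+r,\O}\le C\|f\|_{1,\O}$; on a convex domain with harmless junctions one may take $r=1$. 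The principal obstacle is precisely this bookkeeping of corner and transition singularities, ensuring that the minimal singular exponent strictly exceeds $1/2$ so that the normal trace is well defined and the threshold $r>1/2$ is reached.

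For part (ii), I would exploit that $u$ is an eigenfunction, so that $Tu=\mu u$ with $\mu=1/(1+\l)\neq0$. Rewriting the eigenrelation as $\mu\,\ha(u,v)=b(u,v)$ for all $v\in\HuO$ shows that $w:=\mu u$ is exactly the solution of \eqref{T1} with datum $f=u\in\HuO$. Applying part (i) then gives $\mu u\in\HurO$ with $\|\mu u\|_{1+r,\O}\le C\|u\|_{1,\O}$, whence $u\in\HurO$ and $\|u\|_{1+r,\O}\le C(1+\l)\|u\|_{1,\O}$. This bootstrapping through the operator $T$ is where the constant acquires its dependence on the eigenvalue $\l$, completing the proof.
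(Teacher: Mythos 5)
The paper does not actually prove this lemma: it is quoted verbatim from \cite[Lemma 2.2]{MRR1}, so the ``paper's proof'' is a citation. Your argument is essentially the proof given in that reference: identify the strong form (harmonic $w$ with the Robin condition $\partial_n w + w = f$ on $\Go$ and $\partial_n w = 0$ on $\G_1$), invoke Grisvard's regularity theory for the Laplacian on polygons with Neumann-type data in $H^{1/2}$, and obtain part (ii) by feeding the eigenfunction back through the solution operator, $Tu=\mu u$ with $\mu=1/(1+\l)$, which is exactly where the constant picks up its $\l$-dependence. One caution on a side remark: your claim that the singular exponents in ``Neumann/Dirichlet configurations'' stay strictly above $1/2$ is false in general --- a flat Dirichlet--Neumann junction (angle $\pi$) has exponent exactly $\pi/(2\omega)=1/2$, and reentrant junctions drop below it. This does not damage your proof only because the problem has no Dirichlet portion: both $\Go$ (Robin, a lower-order perturbation of Neumann) and $\G_1$ are Neumann-type, so all corner and junction exponents are of the form $k\pi/\omega$ with $\omega<2\pi$, hence strictly greater than $1/2$, which is precisely what the threshold $r>1/2$ requires.
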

%\begin{proof}
%The proof of (i) follows from the classical regularity result for the
%Laplace equation with Neumann boundary conditions (cf. \cite{G}). The
%proof of (ii) follows from the same arguments and the fact that $w$ is
%the solution of problem~\eqref{T1} with $f=\l w$, combined with a
%bootstrap trick.
%\end{proof}
\begin{remark}\label{angleome}
The constant $r>1/2$ is the Sobolev exponent for the Laplace
problem with Neumann boundary conditions. If $\O$ is convex, then
$r\ge1$, whereas, otherwise, $r:=\pi/\omega$ with $\omega$
being the largest reentrant angle of $\O$ (see \cite{G}).
\end{remark}

Hence, as a consequence of the compact inclusion
$\HurO\hookrightarrow\HuO$, $T$ is a compact operator.
We have the following spectral
characterization for the operator $T$.

\begin{theorem}
\label{CHAR_SP}
The spectrum of $T$ decomposes as follows: 
$\sp(T)=\left\{0,1\right\}\cup\left\{\mu_k\right\}_{k\in\N}$, where:
\begin{enumerate}
\item[\textit{i)}] $\mu=1$ is an eigenvalue of $T$ and its associated
eigenspace is the space of constant functions in $\O$;
\item[\textit{ii)}] $\mu=0$ is an infinite-multiplicity eigenvalue of
$T$ with associated eigenspace is 
$\HuGoO:=\left\{q\in\HuO:\ q=0\ \mbox{on }\Go\right\}$;
\item[\textit{iii)}] $\left\{\mu_k\right\}_{k\in\N}\subset(0,1)$ is a
sequence of finite-multiplicity eigenvalues of $T$ which converge to $0$
and their corresponding eigenspaces lie in $\HurO$.
\end{enumerate}
\end{theorem}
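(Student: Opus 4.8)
The plan is to combine the abstract spectral theorem for compact self-adjoint operators with the explicit algebraic structure of $T$ dictated by its defining relation~\eqref{T1}. Since $T$ is compact and self-adjoint for the inner product $\ha(\cdot,\cdot)$, that theorem immediately furnishes the general shape of the spectrum: $\sp(T)\setminus\{0\}$ is a finite or countable set of real eigenvalues of finite multiplicity whose only possible accumulation point is $0$, and $0\in\sp(T)$ because $T$ acts on the infinite-dimensional space $\HuO$. It then remains to pin down the kernel, to confine the nonzero eigenvalues to $(0,1]$, and to isolate the value $\mu=1$.

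First I would identify the kernel. Reading off~\eqref{T1}, the equality $Tf=0$ is equivalent to $b(f,v)=0$ for every $v\in\HuO$; since the trace operator sends $\HuO$ onto a dense subspace of $\LGo$, this forces $f=0$ on $\Go$, that is $f\in\HuGoO$. The reverse inclusion is immediate, for if $f\in\HuGoO$ then $b(f,v)=0$ for all $v$, and ellipticity (Lemma~\ref{ha-elipt}) gives $Tf=0$. Thus $\ker T=\HuGoO$, an evidently infinite-dimensional space, which establishes part~(ii).

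Next I would localize the nonzero spectrum by means of a Rayleigh quotient. If $Tu=\mu u$ with $u\neq0$ and $\mu\neq0$, then testing the relation $\ha(Tu,v)=b(u,v)$ against $v=u$ produces $\mu\,\ha(u,u)=b(u,u)$, so $\mu=b(u,u)/\ha(u,u)$. Because $\ha(u,u)=a(u,u)+b(u,u)$ with both terms nonnegative and $\ha(u,u)>0$ by ellipticity, this quotient lies in $[0,1]$; and since $\mu\neq0$ forces $b(u,u)>0$, in fact $\mu\in(0,1]$. The value $\mu=1$ is attained exactly when $a(u,u)=\int_\O|\nabla u|^2=0$, i.e. when $u$ is constant, and conversely every constant satisfies $\ha(u,v)=b(u,v)$ and hence $Tu=u$; this yields part~(i) and relegates the remaining eigenvalues $\{\mu_k\}$ to $(0,1)$, their accumulation at $0$ and finite multiplicity being inherited from the spectral theorem. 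The extra regularity $u\in\HurO$ asserted in~(iii) will follow by writing $u=\mu_k^{-1}Tu$ and applying Lemma~\ref{LEM:REG}(i).

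The main obstacle I anticipate is the density argument that pins $\ker T$ down to exactly $\HuGoO$, since it rests on the trace map from $\HuO$ having dense range in $\LGo$; the remainder of the argument is elementary manipulation of the bilinear forms combined with the abstract spectral theorem.
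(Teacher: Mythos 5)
The paper itself states Theorem~\ref{CHAR_SP} without proof (it is recalled from \cite{MRR1}), so your attempt can only be measured against the standard argument, which you essentially reproduce: spectral theorem for compact self-adjoint operators, identification of $\ker T$, a Rayleigh quotient to confine the nonzero eigenvalues, and the regularity lemma for the eigenspaces. Those steps are correct as you wrote them. In particular, the kernel identification, the identity $\mu=b(u,u)/\ha(u,u)\in(0,1]$, the characterization of $\mu=1$ by $a(u,u)=0$ (constants, since $\O$ is a connected domain), and the regularity claim via $u=\mu_k^{-1}Tu$ together with Lemma~\ref{LEM:REG}(i) are all sound. A small simplification: the density-of-traces argument you flag as the main obstacle is unnecessary, since testing $b(f,v)=0$ with $v=f$ gives $\|f\|_{0,\Go}=0$ directly, hence $f\in\HuGoO$.

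There is, however, one genuine gap: part~(iii) asserts a countably infinite sequence $\{\mu_k\}_{k\in\N}\subset(0,1)$ converging to $0$, and nothing in your argument shows that $T$ has infinitely many nonzero eigenvalues. The spectral theorem you invoke only guarantees that the nonzero eigenvalues accumulate at $0$ \emph{if} there are infinitely many of them; a priori $T$ could have finite rank (its kernel is infinite-dimensional, so no dimension count rules this out), in which case the set $\{\mu_k\}$ would be finite and the convergence statement vacuous. To close the gap, observe that $T$ maps the $\ha$-orthogonal complement $(\ker T)^{\perp}$ into itself (by self-adjointness) and is injective there, and that $(\ker T)^{\perp}$ is infinite-dimensional: it is isomorphic to the quotient $\HuO/\HuGoO$, which in turn is isomorphic to the trace space $\{v|_{\Go}:v\in\HuO\}$, a dense (hence infinite-dimensional) subspace of $\LGo$ — exactly the density fact you already used. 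An injective compact self-adjoint operator on an infinite-dimensional Hilbert space admits an orthonormal basis of eigenvectors with nonzero eigenvalues, each of finite multiplicity, so it must have infinitely many distinct nonzero eigenvalues; removing the eigenvalue $\mu=1$, whose eigenspace is the one-dimensional space of constants, leaves the required infinite sequence in $(0,1)$, which then converges to $0$ by the accumulation property. With this addition your proof is complete.
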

%
%\begin{proof}
%Properties (i) and (ii) are easy to check. Properties (iii) follows from
%the classical spectral characterization of compact operators and
%Lemma~\ref{LEM:REG}(ii).
%\end{proof}

%-----------------------------------------------------------------------

\setcounter{equation}{0}
\section{VEM discretization}
\label{SEC:DISCRETE}

We will study in this section, the virtual element
numerical approximation of the eigenproblem
presented in Problem~\ref{P2}, by considering weaker mesh assumptions
than the mesh assumptions considered in \cite{MRR1}.
%relaxing the mesh assumptions
%considered in \cite{MRR1}.
%
We will follow some
recent results from \cite{BLR17,BrS} for the Poisson problem.
With this aim, first we recall the mesh construction.

%The aim of our numerical method is to be the more 
%general possible and considering the more weak
%hypotheses as possible. With this in mind, and
%inspired by the work presented in \cite{BLR17},
%we will consider our meshes as follows.

Let $\left\{\CT_h\right\}_h$ be a sequence of decompositions of $\O$
into polygons $\E$. Let $h_\E$ denote the diameter of the element $\E$
and $h$ the maximum of the diameters of all the elements of the mesh,
i.e., $h:=\max_{\E\in\O}h_\E$. For $\CT_h$ we will consider
the following assumption:
\begin{itemize}
\item \textbf{A1.} There exists $\g>0$ such that, for all meshes
$\CT_h$, each polygon $\E\in\CT_h$ is star-shaped with respect to a ball
of radius greater than or equal to $\g h_{\E}$.
\end{itemize}

The next results will be obtained only under assumption \textbf{A1.}
In particular, we can consider meshes with edges arbitrarily small
with respect to the element diameter $h_{\E}$.

%are going to avoid the usual assumption
%on the length of edges can not be arbitrarily small.
%Observe that requiring for $\CT_h$ only \textbf{A1} is more weak that the classic
%assumptions on \cite{BBCMMR2013}. Clearly, the error  estimates for our method
%will be subjected  only to \textbf{A1}.
% \begin{figure}[H]
%	\begin{center}
%		\begin{minipage}{15cm}
%			\centering\includegraphics[height=4.5cm, width=11.5cm]{tablasymallas/polygons.JPG}
%
%               \end{minipage}
%		\caption{Sample meshes with small edges. From left to right: $\CT_h^1$, $\CT_h^2$ and $\CT_h^3$ for $N=8$.
%}
%		\label{FIG:meshes}
%	\end{center}
%\end{figure}
We consider now a simple polygon $\E$, we define 
$$
\mathbb{B}_1(\partial\E)
:=\left\{v\in C^0(\partial\E):v|_e\in\P_1(e)
\text{ for all edges }e\subset\partial\E\right\}.
$$
We then consider the finite-dimensional space defined as follows:
$$
\VK
:=\left\{v\in\HuE:
\ v|_{\partial\E}\in\mathbb{B}_1(\partial\E)
\text{ and }\Delta v|_{\E}=0\right\}.
$$

As in \cite{BLR17}, we choose the following degrees of freedom:
For all $v_h\in\VK$, they are defined as follows:
\begin{itemize}
\item values of $v_h$ at the $N_K$ vertices of $\E$.
%\item the values of $v_h$ at $k \geq ˆ1$ distinct points of
%every edge $e\in\partial\E$ (for example
%we can take the $k \geqˆ' 1$ internal points of
%the $(k + 1)$-Gauss-Lobatto quadrature
%rule in $e$ (see \cite{BBMR2014}).
%\item the moments of $v_h$ against a scaled polynomial basis
%$\{m_i\}_{i=1}^{k(k-1)/2}$ of $\P_{k-2}(\E)$:
%$$T_i^{0}(v_h):=\vert\E\vert^{-1}\int_{\E}v_hm_i,
%\quad\textrm{span}\{m_i\}_{i=1}^{k(k-1)/2}=\P_{k-2}(\E),
%\quad\Vert m_i\Vert_{\infty,\E}\approx1.$$
\end{itemize}
%We will denote by $\{T_i^{\partial}(v_h)\}_{i=1}^{Nk}$ the $Nk$
%boundary degrees of freedom.

Next, for every
decomposition $\CT_h$ of $\O$ into simple polygons $\E$, we define
the global virtual space
$$
\Vh:=\left\{v\in\HuO:\ v|_{\E}\in\VK\right\}.
$$

In order to construct the discrete scheme, we need some preliminary
definitions. First, we split the bilinear form $\ha(\cdot,\cdot)$ as
follows:
$$
\ha(w,v)=\sum_{\E\in\CT_h}a^{\E}(w,v)+b(w,v)
\qquad w,v\in\HuO,
$$
where 
\begin{equation*}
\label{defin}
a^{\E}(w,v)
:=\int_{\E}\nabla w\cdot\nabla v
\qquad w,v\in\HuO.
 \end{equation*}
 
%Observe that it is not possible
%to compute the bilinear form $a^K(\cdot,\cdot)$
%exactly with the degrees of freedom that we are
%considering.

%Is for this reason the we need to project properly
%the entries of $a^K(\cdot,\cdot)$ to compute  the matrix.
%In fact, the final output will be a local matrix on
%each element $\E$ whose associated bilinear form is exact
%whenever one of the two entries is a polynomial of degree
%$k$.

Next, for any $\E\in\CT_h$ and for any sufficiently regular
function $v$, we define first
\begin{equation}
\label{eq:fixed}
\overline{v}:=\vert\partial\E\vert^{-1}\int_{\partial \E}v.
\end{equation}
%which makes sense for any  $H^1(\E)$.

%\begin{equation}
% \label{equd}
%\overline{\varphi}:=\frac1{N_{\E}}\sum_{i=1}^{N_{\E}}\varphi(P_i),
%\end{equation}
%where $P_i$, $1\le i\le N_{\E}$, are the vertices of $\E$.
Now, we define the projector $\PiK:\VK\longrightarrow\P_1(\E)\subseteq\VK$ for
each $v\in\VK$ as the solution of 
\begin{subequations}
\begin{align*}
a^{\E}\big(\PiK v,q\big)
& =a^{\E}(v,q)
\qquad\forall q\in\P_1(\E),
%\label{numero}
\\
\overline{\PiK v}
&=\overline{v}.
%\label{numeroo}
\end{align*}
\end{subequations}

%\textcolor{blue}{\begin{remark}
%Equation \eqref{numeroo} is only needed for the
%problem above to be well posed. However, it is not used
%at all on the forthcoming analysis. Therefore, it could be substituted by any
%other appropriate compatible average of $\varphi$ on $\partial K$, for instance,
%\end{remark}}

Now, we introduce the following symmetric and semi-positive
definite bilinear form on $\VK\times\VK$ (see \cite{WRR}).
For all elements $\E\in\CT_h$:
\begin{equation}
\label{20}
S^{\E}(w_h,v_h):=h_{\E}\int_{\partial{\E}}\partial_{s}w_h\partial_s
v_h\qquad\forall w_h,v_h\in\VK, 
\end{equation}
where $\partial_s$ denotes a derivative along the edge.

%for some positive constants $c_0$ and $c_1$ independent of $\E$.
Then, set
$$
a_h(w_h,v_h)
:=\sum_{\E\in\CT_h}a_h^{\E}(w_h,v_h)
\qquad w_h,v_h\in\Vh,
$$
where $a_h^{\E}(\cdot,\cdot)$ is the bilinear form defined on
$\VK\times\VK$ by
\begin{equation}
\label{21}
a_h^{\E}(w,v)
:=a^{\E}\big(\PiK w,\PiK v\big)
+S^{\E}\big(w-\PiK w,v-\PiK v\big)
\qquad w,v\in\VK. 
\end{equation}

%The following properties of the bilinear form
%$a_h^{\E}(\cdot,\cdot)$ holds.
%\begin{itemize}
%\item \textit{$k$-Consistency}: 
%$$
%a_h^{\E}(p,v_h)
%=a^{\E}(p,v_h)
%\qquad\forall p\in\P_k(\E),
%\quad\forall v_h\in\VK. 
%$$
%
%\item \textit{Stability}: There exist two positive constants $\alpha_*$
%and $\alpha^*$, independent of $\E$, such that:
%\begin{equation}
%\label{24}
%\alpha_*a^{\E}(v_h,v_h)
%\leq a_h^{\E}(v_h,v_h)
%\leq\alpha^*a^{\E}(v_h,v_h)
%\qquad\forall v_h\in\VK. 
%\end{equation}
%\end{itemize}

Now, we introduce the following discrete semi-norm:
\begin{equation}
\label{eq:triple}
\vertiii{v}_{\E}^2:=a^{\E}\big(\PiK v,\PiK v)+S^{\E}(v-\bar{v},v-\bar{v})\qquad\forall
v\in\VK+\mathcal{V}^{\E},
\end{equation}
where  $\mathcal{V}^{\E}\subseteq H^1(\E)$ is a subspace of sufficiently
regular functions for $S^{\E}(\cdot,\cdot)$ to make sense.
 
Now, for any sufficiently regular functions,
we introduce the following global semi-norms
$$
\vertiii{v}^2:=\sum_{\E\in\CT_h}\vertiii{v}_{\E}^2,\qquad
\left|v\right|_{1,h}^2
:=\sum_{\E\in\CT_h}\left\|\nabla v\right\|_{0,\E}^2.
$$
%which is well defined for every $v\in\LO$ such that $v|_{\E}\in\HuE$ for
%all polygon $\E\in\CT_h$. 
 
It has been proved in  \cite[Lemma~3.1]{BLR17} that
there exist positive constants $C_1,C_2,C3$, independent of $h$,
such that
\begin{align}
C_1\vertiii{v}_{\E}^2\le a_h^{\E}(v,v)\le C_2\vertiii{v}_{\E}^2\quad\forall v\in\VK,\label{eqrefgt1}\\
a_h^{\E}(v,v)\le C_3(\vertiii{v}^2+\vert v\vert_{1,\E}^2)\quad\forall v\in\VK.\label{eqrefgt12}
\end{align}
In addition, it holds
\begin{align}
a^{\E}(v,v)\le C_4\vertiii{v}_{\E}^2\quad\forall v\in\VK,\label{eqrefgt2}\\
\nonumber\vertiii{p}_{\E}^2\le C_5 a^{\E}(p,p)\quad\forall p\in\P_1(\E),\label{eqrefgt3}
\end{align}
where $C_4,C_5$ are independent of $h$.

Now we are in a position to write the virtual element
discretization of Problem~\ref{P1}.
\begin{problem}
\label{P11}
Find $(\l_h,u_h)\in\R\times\Vh$, $u_h\ne0$, such that
$$
a_h(u_h,v_h)
=\l_h b(u_h,v_h)
\qquad\forall v_h\in\Vh.
$$
\end{problem}

We use again a shift argument to rewrite this discrete eigenvalue
problem in the following convenient equivalent form.
\begin{problem}
\label{P21}
Find $(\l_h,u_h)\in\R\times\Vh$, $u_h\ne0$, such that
$$
\ha_h(u_h,v_h)
=\left(\l_h+1\right)b(u_h,v_h)
\qquad\forall v_h\in\Vh,
$$
\end{problem}

\noindent where the bilinear form
$\ha_h:\Vh\times\Vh\rightarrow\mathbb{R}$ is defined by
\begin{equation*}
\label{eq:shifteado}
\ha_h(u_h,v_h)
:=a_h(u_h,v_h)+b(u_h,v_h)
\qquad u_h,v_h\in\Vh.
\end{equation*}

Clearly $\ha_h(\cdot,\cdot)$ is symmetric and continuous.
In the following result we prove that $\ha_h(\cdot,\cdot)$ is elliptic
in $\Vh$.

\begin{lemma}
\label{ha-elipt-disc}
There exists a constant $\beta>0$, independent of $h$, such that
$$
\ha_{h}(v_h,v_h)
\ge\beta\left\|v_h\right\|_{1,\O}^2
\qquad\forall v_h\in\Vh.
$$
\end{lemma}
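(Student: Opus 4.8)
The plan is to reduce the discrete ellipticity to the continuous one (Lemma~\ref{ha-elipt}) by showing that $\ha_h(v_h,v_h)$ dominates $\ha(v_h,v_h)$ up to an $h$-independent constant. First I would split $\ha_h(v_h,v_h)=a_h(v_h,v_h)+b(v_h,v_h)$ and work on the first term elementwise. Applying the lower bound in \eqref{eqrefgt1}, namely $a_h^{\E}(v_h,v_h)\ge C_1\vertiii{v_h}_{\E}^2$, and summing over $\E\in\CT_h$, gives $a_h(v_h,v_h)\ge C_1\sum_{\E\in\CT_h}\vertiii{v_h}_{\E}^2$.

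The next step is to convert the triple-bar quantities back into genuine $H^1$-seminorms on each element. For this I would invoke \eqref{eqrefgt2}, which states $a^{\E}(v_h,v_h)\le C_4\vertiii{v_h}_{\E}^2$, equivalently $\vertiii{v_h}_{\E}^2\ge C_4^{-1}a^{\E}(v_h,v_h)$. Summing over the mesh and recalling that $\sum_{\E\in\CT_h}a^{\E}(v_h,v_h)=\left|v_h\right|_{1,\O}^2$ yields $a_h(v_h,v_h)\ge (C_1/C_4)\left|v_h\right|_{1,\O}^2$. Setting $C_0:=\min\{C_1/C_4,1\}$ and restoring the boundary term $b(v_h,v_h)=\int_{\Go}v_h^2$, I obtain
$$\ha_h(v_h,v_h)\ge\frac{C_1}{C_4}\left|v_h\right|_{1,\O}^2+\int_{\Go}v_h^2\ge C_0\left(\left|v_h\right|_{1,\O}^2+\int_{\Go}v_h^2\right)=C_0\,\ha(v_h,v_h).$$
Finally, applying Lemma~\ref{ha-elipt} to the continuous form gives $\ha(v_h,v_h)\ge\alpha\left\|v_h\right\|_{1,\O}^2$, so the assertion holds with $\beta:=C_0\alpha$. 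This constant is independent of $h$ because $C_1$ and $C_4$ are $h$-independent by \eqref{eqrefgt1}--\eqref{eqrefgt2}, while $\alpha$ depends only on $\O$.

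This argument is essentially a chaining of the stated estimates, so I do not expect a serious analytical obstacle; the main conceptual point is recognizing that the consistency term $a^{\E}(\PiK v_h,\PiK v_h)$ by itself does \emph{not} control the full element seminorm, since $\PiK$ discards part of the gradient, and that inequality \eqref{eqrefgt2} is precisely the device that restores this control. The one thing to be careful about is that all the constants remain independent of $h$; this hinges entirely on the $h$-independence asserted in \eqref{eqrefgt1}--\eqref{eqrefgt2}, which is itself a consequence of Assumption~\textbf{A1} together with the particular choice of the stabilization $S^{\E}(\cdot,\cdot)$ in \eqref{20}. Any attempt to bound $\left\|v_h\right\|_{0,\O}$ directly would require a discrete Poincaré–Friedrichs inequality, but passing through $\ha(\cdot,\cdot)$ lets Lemma~\ref{ha-elipt} supply that inequality for free.
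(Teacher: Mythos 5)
Your proof is correct and follows essentially the same route as the paper: both chain \eqref{eqrefgt1} with \eqref{eqrefgt2} to bound $a_h(v_h,v_h)$ from below by a constant multiple of $\left|v_h\right|_{1,\O}^2$, restore the boundary term $b(v_h,v_h)$, and conclude with a Poincar\'e-type argument, all with $h$-independent constants. The only cosmetic difference is that you close by recognizing the lower bound as $C_0\,\ha(v_h,v_h)$ and citing Lemma~\ref{ha-elipt}, whereas the paper invokes the generalized Poincar\'e inequality directly --- the same fact, packaged differently.
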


\begin{proof}
From  the definition of the bilinear form $\ha_h(\cdot,\cdot)$, we have that
\begin{multline*}
\ha_{h}(v_h,v_h)=a_h(v_h,v_h)+b(v_h,v_h)\\
=\sum_{\E\in\CT_h}a_h^{\E}\big(v_h,v_h\big)+b(v_h,v_h)
\geq\sum_{\E\in\CT_h}C_1\vertiii{v}_{\E}^2+\Vert v_h\Vert_{0,\Go}^{2}
\geq C\sum_{\E\in\CT_h}a^{\E}\big(v_h,v_h\big)+\Vert v_h\Vert_{0,\Go}^{2}\\
\geq C\vert v_h\vert_{1,\O}^2+\Vert v_h\Vert_{0,\Go}^{2}
\geq \beta\left\|v_h\right\|_{1,\O}^2
\qquad\forall v_h\in\Vh,
\end{multline*}
%\begin{align*}
%\ha_{h}(v_h,v_h)&=a_h(v_h,v_h)+b(v_h,v_h)\\
%&=\sum_{\E\in\CT_h}a_h^{\E}\big(v_h,v_h\big)+b(v_h,v_h)\\
%&\geq\sum_{\E\in\CT_h}C_1\vertiii{v}_{\E}^2+\Vert v_h\Vert_{0,\G}^{2}\\
%&\geq C\sum_{\E\in\CT_h}a^{\E}\big(v_h,v_h\big)+\Vert v_h\Vert_{0,\G}^{2}\\
%&\geq C\vert v_h\vert_{1,\O}^2+\Vert v_h\Vert_{0,\G}^{2}\\
%&\geq \beta\left\|v_h\right\|_{1,\O}^2
%\qquad\forall v_h\in\Vh,
%\end{align*}
where we have used \eqref{eqrefgt1}, \eqref{eqrefgt2} and
the generalized Poincar\'e inequality. This concludes the proof.
%
%Repeating the arguments in \cite[Lemma~3.1]{BLR17},
%together with the generalized Poincar\'e inequality,
%we have that the bilinear form $\ha_{h}(\cdot,\cdot)$ is bounded
%and uniformly elliptic. More precisely, we have the following result.
%Thanks to \eqref{24} and Lemma~\ref{ha-elipt}, it is easy to check that
%the above inequality holds with
%$\beta:=\alpha\min\left\{\alpha_{*},1\right\}$.
\end{proof}

With this coercivity result at hand, we are in a position
to  introduce the  discrete solution operator
\begin{align*}
T_h:\ \HuO & \longrightarrow\HuO,
\\
f & \longmapsto T_hf:=w_h,
\end{align*}
where $u_h\in\Vh$ is the solution of the following discrete source
problem
\begin{equation*}
\ha_h(w_h,v_h)=b(f,v_h)
\qquad\forall v_h\in\Vh.
\end{equation*}

 Notice that Lemma~\ref{ha-elipt-disc} implies that the linear operator $T_h$ is well
defined and bounded uniformly with respect to $h$. Moreover, as in the
continuous case, $(\l_h,u_h)\in\R\times\Vh$ solves Problem~\ref{P21}
(and hence Problem~\ref{P11}) if and only if $T_hu_h=\mu_h u_h$ with
$\mu_h\neq0$ and $u_h\ne0$, in which case $\mu_h:= 1/(1+\l_h)$.
Also, $T_h|_{\Vh}:\ \Vh\longrightarrow\Vh$ is self-adjoint with
respect to $\ha_{h}(\cdot,\cdot)$. 
%Indeed, given $f,g\in\Vh$,
%\begin{equation*}
%\ha_h(T_hf,g)
%=b(f,g)
%=b(g,f)
%=\ha_h(T_hg,f)
%=\ha_h(f,T_hg).
%\end{equation*}

As a consequence, we have the following spectral characterization for $T_h$.

\begin{theorem}
\label{CHAR_SP_DISC}
The spectrum of $T_h|_{\Vh}$ consists of $M_h:=\dim(\Vh)$ eigenvalues,
repeated according to their respective multiplicities. It decomposes as
follows: $\sp(T_h|_{\Vh})
=\left\{0,1\right\}\cup\left\{\mu_{hk}\right\}_{k=1}^{N_h}$, where:
\begin{enumerate}
\item[\textit{i)}] the eigenspace associated with $\mu_h=1$ is the space
of constant functions in $\O$;
\item[\textit{ii)}] the eigenspace associated with $\mu_h=0$ is
$Z_h:=\Vh\cap\HuGoO=\left\{q_h\in V_h:\ q_h=0\ \mbox{on }\Go\right\}$;
\item[\textit{iii)}] $\mu_{hk}\subset(0,1)$,
$k=1,\dots,N_h:=M_h-\dim(Z_h)-1$, are non-defective eigenvalues repeated
according to their respective multiplicities.
\end{enumerate}
\end{theorem}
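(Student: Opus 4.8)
The plan is to mimic the argument behind the continuous characterization in Theorem~\ref{CHAR_SP}, transporting it to the finite-dimensional setting. Since $\ha_h(\cdot,\cdot)$ is symmetric, continuous and, by Lemma~\ref{ha-elipt-disc}, elliptic, it defines an inner product on $\Vh$, and $T_h|_{\Vh}$ is self-adjoint with respect to it. Hence $T_h|_{\Vh}$ is diagonalizable: it admits a $\ha_h$-orthonormal basis of eigenvectors, its spectrum consists of exactly $M_h=\dim(\Vh)$ real eigenvalues repeated according to multiplicity, and every eigenvalue is non-defective. This already yields the total count $M_h$ and the non-defectiveness asserted in \textit{iii)}.

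Next I would locate the eigenvalues through the Rayleigh quotient. If $T_hu_h=\mu_hu_h$ with $u_h\neq0$, testing the defining source problem with $u_h$ gives $\ha_h(T_hu_h,u_h)=b(u_h,u_h)$, so $\mu_h=b(u_h,u_h)/\ha_h(u_h,u_h)$. Because $b(u_h,u_h)=\Vert u_h\Vert_{0,\Go}^2\ge0$ and $\ha_h(u_h,u_h)\ge\beta\Vert u_h\Vert_{1,\O}^2>0$, every eigenvalue satisfies $\mu_h\ge0$; and since $\ha_h=a_h+b$ with $a_h(u_h,u_h)\ge0$, one has $b(u_h,u_h)\le\ha_h(u_h,u_h)$, whence $\mu_h\le1$. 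Thus $\sp(T_h|_{\Vh})\subset[0,1]$.

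I would then identify the two extreme eigenvalues. The equality $\mu_h=1$ forces $a_h(u_h,u_h)=0$; invoking the lower bound in \eqref{eqrefgt1} gives $\vertiii{u_h}_{\E}=0$ on each element, and then \eqref{eqrefgt2} yields $\sum_{\E\in\CT_h}a^{\E}(u_h,u_h)=\vert u_h\vert_{1,\O}^2=0$, so $u_h$ is constant; conversely, any constant satisfies $\PiK c=c$ and hence $a_h(c,v_h)\equiv0$, so $T_hc=c$. Therefore the eigenspace of $\mu_h=1$ is precisely the one-dimensional space of constants, proving \textit{i)}. Symmetrically, $\mu_h=0$ means $T_hu_h=0$, i.e. $b(u_h,v_h)=0$ for all $v_h\in\Vh$; taking $v_h=u_h$ gives $\Vert u_h\Vert_{0,\Go}=0$, so $u_h=0$ on $\Go$ and $u_h\in Z_h$, while the converse inclusion is immediate. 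Hence the eigenspace of $\mu_h=0$ is exactly $Z_h$, proving \textit{ii)}.

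Finally, the remaining eigenvalues are counted by removing from the total $M_h$ the one-dimensional constant eigenspace ($\mu_h=1$) and the $\dim(Z_h)$-dimensional kernel ($\mu_h=0$), which gives $N_h=M_h-\dim(Z_h)-1$. By the previous step none of these can equal $0$ or $1$, so $\mu_{hk}\in(0,1)$, and non-defectiveness follows from the self-adjointness already established. The only delicate point is the sharp identification of the $\mu_h=1$ eigenspace with the constants: this is where assumption \textbf{A1} and the chosen stabilization enter, through the $h$-uniform equivalence \eqref{eqrefgt1}--\eqref{eqrefgt2} that allows passage from the computable form $a_h$ back to the genuine Dirichlet energy $a^{\E}(\cdot,\cdot)$.
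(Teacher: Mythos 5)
Your proof is correct and takes essentially the approach the paper intends: the paper states this theorem as a direct consequence of the self-adjointness of $T_h|_{\Vh}$ with respect to $\ha_h(\cdot,\cdot)$ (mirroring the continuous characterization in Theorem~\ref{CHAR_SP}), and your finite-dimensional spectral argument, Rayleigh-quotient bounds, and identification of the eigenspaces for $\mu_h=1$ (via \eqref{eqrefgt1}--\eqref{eqrefgt2}) and $\mu_h=0$ (via the kernel of $b(\cdot,\cdot)$ on $\Vh$) supply exactly the details the paper leaves implicit.
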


%-----------------------------------------------------------------------

\setcounter{equation}{0}
\section{Convergence and error estimates}
\label{SEC:approximation}

In order to prove that the solutions of the discrete
problem converge to those of the continuous
problem, we will follow the standard procedure
for spectral theory for compact operators \cite{BO},
which consist in showing that $T_h$
converges in norm to $T$ as $h$ tends to zero.

%The aim of this section is to establish that the proposed virtual
%element discretization provides an
%accurate approximation for the spectrum of the physical problem.
%This is equivalent to prove that
%$T_h$ converge to the continuous solution operator $T$ as $h$ tends to zero.

%To do this task, and
%taking into account the compactness of $T$,
%we resort to the classic spectral
%theory for compact operators \cite{BO}
%in order to prove the convergence in norm of $\|(T-T_h)f\|_{1,\Omega}$, for $f\in H^1(\Omega)$
%and $h\rightarrow 0$.

With this end, we begin by proving the following result.

%\FL{To prove that $T_h$ provides a correct spectral approximation of $T$, we
%will resort to the classical theory for compact operators (see
%\cite{BO}), which is based on the convergence in norm of $T_h$ to $T$ as
%$h\to0$. With the aim of proving this, the first step is to establish
%the following result.}

\begin{lemma}
\label{lemcotste}
There exists $C>0$, independent of $h$, such that,
for all $f\in\HuO$, if $w=Tf$ and $w_h=T_hf$, then
\begin{equation}
\label{eq:T-Th1}
\left\|w-w_h\right\|_{1,\O}
\le C\left(\left\|w-w_I\right\|_{1,\O}+ \left|w_{\pi}-w\right|_{1,h}
+\vertiii{w-w_{I}}+\vertiii{w-w_{\pi}}\right),
\end{equation}
for all $w_I\in\Vh$ and for all $w_{\pi}\in\LO$ such that
$w_{\pi}|_{\E}\in\P_1(\E)$ $\forall\E\in\CT_h$.  In addition 
\end{lemma}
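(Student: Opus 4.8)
The plan is to run the standard Strang-type argument adapted to the virtual element setting, drawing all the quantitative input from the edge-size-robust bounds \eqref{eqrefgt1}--\eqref{eqrefgt3}, which hold under assumption \textbf{A1} alone. First I would split the error by the triangle inequality,
$$
\|w-w_h\|_{1,\O}\le\|w-w_I\|_{1,\O}+\|w_I-w_h\|_{1,\O},
$$
so that, since the first summand already appears on the right-hand side of \eqref{eq:T-Th1}, it remains only to estimate $v_h:=w_h-w_I\in\Vh$. Because $v_h$ is discrete, I would apply the coercivity of $\ha_h(\cdot,\cdot)$ (Lemma~\ref{ha-elipt-disc}) and, crucially, use the definitions $w=Tf$ and $w_h=T_hf$: as $v_h\in\HuO$ we have $b(f,v_h)=\ha(w,v_h)$ and $\ha_h(w_h,v_h)=b(f,v_h)$, whence
$$
\ha_h(v_h,v_h)=\ha_h(w_h,v_h)-\ha_h(w_I,v_h)=\ha(w,v_h)-\ha_h(w_I,v_h).
$$

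The heart of the proof is the elementwise treatment of this consistency error. I would write it as $\sum_{\E}\big(a^{\E}(w,v_h)-a_h^{\E}(w_I,v_h)\big)+b(w-w_I,v_h)$, insert on each $\E$ the piecewise polynomial $w_{\pi}$, and exploit the polynomial consistency of $a_h^{\E}$. This consistency, namely $a_h^{\E}(p,v_h)=a^{\E}(p,v_h)$ for every $p\in\P_1(\E)$, is not stated explicitly but follows immediately from \eqref{21} and the definition of $\PiK$, since $\PiK p=p$ (so the stabilization term vanishes) and $a^{\E}(p,\PiK v_h)=a^{\E}(p,v_h)$. It yields
$$
a^{\E}(w,v_h)-a_h^{\E}(w_I,v_h)=a^{\E}(w-w_{\pi},v_h)+a_h^{\E}(w_{\pi}-w_I,v_h),
$$
and I would bound the first term by the Cauchy--Schwarz inequality in $a^{\E}(\cdot,\cdot)$ and the second by the discrete Cauchy--Schwarz inequality supplied by \eqref{eqrefgt1}, after the further split $w_{\pi}-w_I=(w_{\pi}-w)+(w-w_I)$. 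Summing over $\E$ and treating $b(w-w_I,v_h)$ with the trace inequality leads to
$$
\ha_h(v_h,v_h)\le C\big(|w_{\pi}-w|_{1,h}+\vertiii{w-w_{\pi}}+\vertiii{w-w_I}+\|w-w_I\|_{1,\O}\big)\big(\|v_h\|_{1,\O}+\vertiii{v_h}\big).
$$

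Finally, I would absorb the factor on the right. Both $\|v_h\|_{1,\O}\le\beta^{-1/2}\ha_h(v_h,v_h)^{1/2}$ (Lemma~\ref{ha-elipt-disc}) and $\vertiii{v_h}\le C\,\ha_h(v_h,v_h)^{1/2}$ (from the lower bound $\ha_h(v_h,v_h)\ge C_1\vertiii{v_h}^2$ obtained exactly as in the proof of Lemma~\ref{ha-elipt-disc}); dividing through by $\ha_h(v_h,v_h)^{1/2}$ and recalling the triangle inequality gives the claimed estimate. The point that requires care, and the reason Lemma~\ref{LEM:REG} is invoked, is that every object involving $w$ (its projection $\PiK w$, the stabilization $S^{\E}(w-\PiK w,\cdot)$, and the seminorms $\vertiii{w-w_I}$, $\vertiii{w-w_{\pi}}$) must be well defined; this is guaranteed because $w\in\HurO$ with $r>1/2$ places $w|_{\E}$ in the admissible space $\mathcal{V}^{\E}$. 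The main obstacle is conceptual rather than technical: one must keep every constant independent of the edge lengths, which is precisely what the borrowed bounds \eqref{eqrefgt1}--\eqref{eqrefgt3} ensure under \textbf{A1}.
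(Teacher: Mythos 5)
Your proof is correct and follows essentially the same route as the paper: triangle inequality, coercivity of $\ha_h$ applied to $v_h=w_h-w_I$, the consistency identities coming from the definitions of $T$ and $T_h$, insertion of $w_\pi$ via the polynomial consistency of $a_h^{\E}$ (which the paper uses implicitly and you correctly justify), and the stability bounds \eqref{eqrefgt1}--\eqref{eqrefgt2}. The only cosmetic differences are that the paper tracks the quantity $\vertiii{v_h}+\left\|v_h\right\|_{0,\Go}$ and closes with the generalized Poincar\'e inequality, while you absorb directly through $\ha_h(v_h,v_h)$ using Lemma~\ref{ha-elipt-disc}; both yield \eqref{eq:T-Th1} and, as a byproduct, the companion bound \eqref{eq:uh-uI}.
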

\begin{equation}
\label{eq:uh-uI}
\vertiii{w_{h}-w_{I}}
\le C\left(\left\|w-w_I\right\|_{1,\O}
+ \left|w_{\pi}-w\right|_{1,h}+\vertiii{w-w_{I}}+\vertiii{w-w_{\pi}}\right).
\end{equation}

\begin{proof}
Let $w=Tf$ and
$w_h=T_hf$. From triangular inequality we have
$$\|w-w_{h}\|_{1,\O}\leq \|w-w_{I}\|_{1,\O}+\|w_{I}-w_{h}\|_{1,\O}. $$

Our task is to estimate the norms of the right hand side above.
To do this, we will consider the arguments
on the proof of Lemma~\ref{ha-elipt-disc}.  

%Let $\widehat{C}:=\min\left\{\min_{\E\in \CT_h}\{C_{1}(\E)\},1\right\}$
%and let $f\in\HuO$.
Now, for $w_I\in\Vh$, we set $v_h:=w_h-w_I$ and thanks to
Lemma~\ref{ha-elipt-disc}, the definitions of $a_h^{\E}(\cdot,\cdot)$ (cf. \eqref{21})
and those of $T$ and $T_h$, we have 
\begin{align*}
\left(\vertiii{v_{h}}+\left\|v_h\right\|_{0,\G_{0}}\right)^{2}&\leq
2\left(\vertiii{v_{h}}^{2}+\left\|v_h\right\|^2_{0,\G_{0}}\right)\\
&\leq C\ha_h(v_h,v_h)=C(\ha_h(w_h,v_h)-\ha_h(w_I,v_h)) 
\\
&=C\left(b(f,v_h)-\sum_{\E\in\CT_h}a_h^\E(w_I,v_h)-b(w_I,v_h)\right)
\\
&=C\Big(b(f,v_h)-b(w_I,v_h)\\
&\left.-\sum_{\E\in\CT_h}
\left(a_h^\E(w_I-w_{\pi},v_h)+a^\E(w_{\pi}-w,v_h)+a^\E(w,v_h)\right)\right)
\\
& =C\left(b(w-w_I,v_h)
-\sum_{\E\in\CT_h}
\left(a_h^\E(w_I-w_{\pi},v_h)+a^\E(w_{\pi}-w,v_h)\right)\right).
\end{align*}
Therefore, from the trace theorem, \eqref{eqrefgt1} and the boundedness of
$a_h^\E(\cdot,\cdot)$ (\eqref{eqrefgt12}) and $a^\E(\cdot,\cdot)$, we get
\begin{align*}
\left(\vertiii{v_{h}}+\left\|v_h\right\|_{0,\Go}\right)^{2}&
\leq C\Big(\left\|w-w_I\right\|_{0,\Go}
\left\|v_h\right\|_{0,\Go}\\
&\quad\left.+\sum_{\E\in\CT_h}
\left(C(\vertiii{w_I-w_{\pi}}_{\E}+\left|w_I-w_{\pi}\right|_{1,\E})
\vertiii{v_h}_{\E}
+\left|w_{\pi}-w\right|_{1,\E}
\left|v_h\right|_{1,\E}\right)\right)
\\
&  \leq C\left(\left\|w-w_I\right\|_{0,\Go}
\left\|v_h\right\|_{0,\Go}
+\sum_{\E\in\CT_h}
\left\{C\left(\vertiii{w_I-w_{\pi}}_{\E}+\left|w_{\pi}-w\right|_{1,\E}\right)
\vertiii{v_h}_{\E}\right\}\right)
\\
&  \leq C \left(\left\|w-w_I\right\|_{0,\Go}+\vertiii{w_I-w_{\pi}}+ \left|w_{\pi}-w\right|_{1,h}\right)\left(\|v_{h}\|_{0,\Go}+\vertiii{v_{h}}\right).
%\\
%& \leq C\left(\left\|u-u_I\right\|_{1,\O}
%+\left|u-u_{\pi}\right|_{1,h}\right)
%\left\|v_h\right\|_{1,\O}.
\end{align*}
Therefore, we have 
\begin{equation*}
\vertiii{v_{h}}+\left\|v_h\right\|_{0,\G_{0}}
\leq \widetilde{C} \left(\left\|w-w_I\right\|_{1,\O}
+\vertiii{w-w_{\pi}}+\vertiii{w-w_{I}}+ \left|w_{\pi}-w\right|_{1,h}\right).
\end{equation*}
Finally, \eqref{eq:T-Th1} follows from the
triangular inequality and the generalized Poincar\'e inequality.
Moreover, \eqref{eq:uh-uI} follows from the above estimate.
\end{proof}

%We have prove so far that the error $\|u-u_I\|_{1,\Omega}$ is controlled by the four terms of \eqref{eq:T-Th1}. Now we are interested in estimate these norms.  From \cite[equation (4.1)]{BLR17} we have the following estimate
%\begin{equation}
%\label{eq:triple_stab}
%\vertiii{u-u_I}\leq C\left(\sum_{K\in\CT_h}h_K^{2s-2}|u|^2_{s,K} \right)^{1/2}\leq Ch^{s-1}|u|_{s,\Omega}.
%\end{equation}

Let us introduce the following approximation result for
polynomials in star-shaped domains (see for instance \cite{BS-2008}),
which is derived by results of interpolation between Sobolev spaces
(see for instance \cite[Theorem~I.1.4]{GR}), leading to an analogous
result for integer values of $s$. Moreover, we remark that the
result for integer values is stated in
\cite[Proposition~4.2]{BBCMMR2013} and follows from the well establish
Scott-Dupont theory (see \cite{BS-2008}). 

%\begin{lemma}
%\label{lmm:bh}
%Under assumption \textbf{A1}, let $s$ be a positive number  such that $1\leq s\leq k+1$. Then, for all $v\in H^1(\Omega)\cap H^s(\Omega)$ there exists $v_{\pi}\in\mathbb{P}_k(\Omega)$ such that
%\begin{equation*}
%|v-v_{\pi}|_{1,\Omega}\leq h^{s-1}|v|_{s,\Omega}.
%\end{equation*}
%\end{lemma}
\begin{lemma}
\label{lmm:bh}
If assumption {\bf A1} is satisfied, then there exists a constant
$C$, depending only on $k$ and $\g$, such that for every $s$ with 
$0\le s\le k$ and for every $v\in\HusE$, there exists
$v_{\pi}\in\P_k(\E)$ such that
$$
\left\|v-v_{\pi}\right\|_{0,\E}
+h_{\E}\left|v-v_{\pi}\right|_{1,\E}
\le Ch_{\E}^{1+s}\left\|v\right\|_{1+s,\E}.
$$
\end{lemma}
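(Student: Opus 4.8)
The plan is to invoke the Scott--Dupont theory (Bramble--Hilbert type estimates) for the integer Sobolev indices $s\in\{0,1,\dots,k\}$ and then fill in the fractional values $s\in(0,k)$ by interpolation between Sobolev spaces. Throughout, the key structural hypothesis is assumption \textbf{A1}: each $\E$ is star-shaped with respect to a ball of radius at least $\g h_\E$. This shape-regularity is exactly what is needed so that the constant in the Bramble--Hilbert lemma depends only on $\g$ (and $k$) and not on the individual geometry of $\E$, and in particular is insensitive to the presence of arbitrarily small edges.

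First I would treat the integer case. For a fixed integer $s$ with $0\le s\le k$, choose $v_\pi\in\P_k(\E)$ to be the averaged Taylor polynomial of degree $k$ of $v$, centered on the ball of radius $\g h_\E$ with respect to which $\E$ is star-shaped. The classical Bramble--Hilbert / Scott--Dupont estimate (see \cite{BS-2008}) then gives, for each integer $m$ with $0\le m\le 1$,
\begin{equation*}
\left|v-v_\pi\right|_{m,\E}
\le C\, h_\E^{\,1+s-m}\left|v\right|_{1+s,\E}
\le C\, h_\E^{\,1+s-m}\left\|v\right\|_{1+s,\E},
\end{equation*}
where the constant $C$ depends only on $k$ and on the chunkiness parameter, which under \textbf{A1} is controlled by $\g$. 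Summing the cases $m=0$ and $m=1$ (the latter weighted by $h_\E$) yields precisely the asserted bound
$\left\|v-v_\pi\right\|_{0,\E}+h_\E\left|v-v_\pi\right|_{1,\E}\le C h_\E^{1+s}\left\|v\right\|_{1+s,\E}$
for integer $s$. This is exactly \cite[Proposition~4.2]{BBCMMR2013}.

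To pass to non-integer $s\in(0,k)$, I would regard the map $v\mapsto v-v_\pi$ as a bounded linear operator and apply the theory of interpolation between Sobolev spaces, as in \cite[Theorem~I.1.4]{GR}. Writing $\left\|v-v_\pi\right\|_{0,\E}+h_\E\left|v-v_\pi\right|_{1,\E}$ as a norm equivalent (uniformly in $h_\E$, after the natural scaling) to an $H^1(\E)$-type quantity, the integer estimates provide the endpoint bounds at $s=\lfloor s\rfloor$ and $s=\lceil s\rceil$; real interpolation then delivers the estimate for the intermediate value, with the $h_\E^{1+s}$ power arising correctly because the two endpoint powers of $h_\E$ interpolate affinely in $s$. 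The main point to watch, and the step I expect to require the most care, is ensuring that the interpolation constant inherits its dependence solely on $k$ and $\g$, uniformly over the mesh: this follows because under \textbf{A1} one may first rescale $\E$ to unit diameter, perform the interpolation on the reference-scale domain where all constants depend only on $\g$, and then scale back, tracking the powers of $h_\E$ explicitly. No dependence on edge lengths enters at any stage, which is precisely why the lemma is compatible with arbitrarily small edges.
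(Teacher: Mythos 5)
Your proposal follows essentially the same route as the paper: the paper gives no detailed proof of this lemma, deriving it directly from the Scott--Dupont theory (via \cite[Proposition~4.2]{BBCMMR2013}) for integer values of $s$ and from interpolation between Sobolev spaces (\cite[Theorem~I.1.4]{GR}) for the fractional values, which are exactly the two steps you carry out. Your additional attention to the uniformity of the constants under assumption \textbf{A1} via rescaling to unit diameter is a correct filling-in of details the paper leaves implicit.
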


Now, we have the following approximation result
in the virtual space $V_h$, which follows from
\cite[Theorem~3.4]{BLR17}.

%is an extension
%of \cite[Proposition~4.3]{BBCMMR2013} to less regular functions.
%For $v_I\in\Vh$ in Lemma~\ref{lemcotste},

\begin{lemma}
\label{estima2}
Under the assumption  {\bf A1}, then, for
each $s$ with $1/2<s\le 1$, there exist $\widehat{t}>1/2$
and a constant $C$, independent of $h$,
such that for every $v\in\HusO$, there exists
$v_I\in\Vh$ that satisfies
\begin{align}
\label{firststimate}
\left|v-v_I\right|_{1+t,\E}&\le Ch_{\E}^{s-t}\left|v\right|_{1+s,\E},
\qquad 0\leq t<\min\{\widehat{t},s\},\\ 
\label{secondstimate}
\left\|v-v_I\right\|_{0,\E}
&\le Ch_{\E}\left|v\right|_{1+s,\E}.
\end{align}
%$$
%\textcolor{red}{\left\|v-v_I\right\|_{0,\E}
%\le Ch_{\E}^{1+s}\left|v\right|_{1+s,\E}.\quad\text{(esta es la que debemos obtener)}}
%$$
\end{lemma}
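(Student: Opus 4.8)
The statement is an interpolation estimate in the virtual element space $V_h$, and the plan is to build the interpolant $v_I$ elementwise and then quote the corresponding local estimate from \cite[Theorem~3.4]{BLR17}. The key structural fact is that $\VK$ is the lowest-order harmonic virtual space with the boundary degrees of freedom being the vertex values, so the natural interpolant is determined purely by the trace of $v$ on $\partial\E$: one sets $v_I|_{\partial\E}$ to be the piecewise-linear function in $\mathbb{B}_1(\partial\E)$ agreeing with $v$ at the $N_K$ vertices, and then lets $v_I$ be the (unique) discrete harmonic extension into the interior, so that $v_I\in\VK$ and, since the vertex values match across shared edges, $v_I\in\Vh\subset\HuO$.

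Next I would control the two norms on the left-hand side. The central observation is that the discrete harmonic extension is energy-minimizing among $H^1(\E)$-functions with the prescribed boundary data, so $\left|v-v_I\right|_{1,\E}$ and the higher seminorm $\left|v-v_I\right|_{1+t,\E}$ are dominated by the error in the boundary interpolation together with a bulk harmonic-regularity bound; this is exactly the content engineered in \cite{BLR17} to survive arbitrarily small edges under only assumption \textbf{A1}. Here the quantity $\widehat{t}>1/2$ is the interior elliptic-regularity (Sobolev) exponent for the harmonic extension problem on a star-shaped polygon, analogous to the $r>1/2$ appearing in Remark~\ref{angleome} and Lemma~\ref{LEM:REG}; one needs $t<\widehat{t}$ so that the $H^{1+t}$ seminorm of the harmonic part is finite, and $t<s$ so that the scaling $h_\E^{s-t}$ is a genuine approximation power. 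With these ingredients the seminorm estimate \eqref{firststimate} follows from the local approximation properties of one-dimensional piecewise-linear interpolation on each edge combined with a trace inequality, and the $L^2$ estimate \eqref{secondstimate} follows by a Poincaré/duality argument (or directly from \cite[Theorem~3.4]{BLR17}), noting the optimal $h_\E^1$ power there.

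The step I expect to be the genuine obstacle is establishing the edge-interpolation bounds \emph{uniformly in the presence of arbitrarily small edges}. A naive trace inequality on a single short edge of length $\ell\ll h_\E$ produces constants that blow up like $\ell^{-1/2}$, which is precisely the degeneracy that standard VEM mesh assumptions rule out. The resolution, which is the technical heart of \cite{BLR17}, is to avoid per-edge constants and instead estimate the boundary interpolation error globally on $\partial\E$ using the fact that $v\in H^{1+s}(\E)$ with $s>1/2$ gives a trace in $H^{1/2+s}(\partial\E)$ that is continuous, so the pointwise vertex interpolation is well defined and its error is controlled by the full trace seminorm rather than by edgewise quantities. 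Once this small-edge-robust boundary estimate is in hand, the interior harmonic-extension bound and the scaling in $h_\E$ assemble into \eqref{firststimate}--\eqref{secondstimate} in the standard way; accordingly, for the present lemma I would simply invoke \cite[Theorem~3.4]{BLR17} for the local estimates and verify that the hypotheses there reduce to assumption \textbf{A1} and to the regularity $v\in\HusO$ with $1/2<s\le1$.
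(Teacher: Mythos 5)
Your proposal is correct and takes essentially the same approach as the paper: estimate \eqref{firststimate} is quoted from \cite[Theorem~3.4]{BLR17}, and \eqref{secondstimate} is proved by precisely the Poincar\'e-type argument you indicate (the paper, following \cite[Remark~4.1]{BM20}, bounds $\left\|v-v_I\right\|_{0,\E}$ by a boundary integral plus $h_{\E}\left|v-v_I\right|_{1,\E}$, then applies the one-dimensional Lagrange interpolation estimate on $\partial\E$ and the trace bound $\left|v\right|_{1/2,\partial\E}\le\left|v\right|_{1,\E}$, and finally \eqref{firststimate} with $t=0$). One minor correction: the $L^2$ bound is not available directly from \cite[Theorem~3.4]{BLR17}, so the separate Poincar\'e argument is genuinely needed, but since that is your primary suggestion the proposal stands.
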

\begin{proof}
Estimate \eqref{firststimate} has been obtained in \cite[Theorem~3.4]{BLR17}.
%\eqref{firststimate} follows  from \cite[ Proposition 4.1]{BM20}.  
%\GR{For the estimate \eqref{secondstimate} we use  \cite[Remark 4.1]{BM20}, standard approximation results in one dimension and  \cite[Lemma 6.1]{BM20}, which lead us to
%\begin{align*}
%\left\|v-v_I\right\|_{0,\E}\leq& C\left(h_{\E}|v|_{1/2,\partial \E}
%   +h_{\E}|v-v_{I}|_{1,\E}\right)\leq C\left(h_{\E}|v|_{1, \E}+h_{\E}^{1+s}|v|_{1+s,\E}\right)\leq Ch_{\E}|v|_{1+s, \E}.
%\end{align*}}
%Estimate \eqref{firststimate} has been obtained in \cite[Theorem~3.4]{BLR17}.
 To obtain \eqref{secondstimate}, with $1/2<s\leq1$,
first we use the Poincar\'e and the Cauchy-Schwarz inequalities,
to obtain (see \cite[Remark 4.1]{BM20}),
% and the steps
%that derive \cite[equation (4.39)]{BLR17}

%$\|v-v_{b}\|_{0,\partial\E}\leq h_{E}|v|_{1,\partial \E}$,
\begin{align*}
\left\|v-v_I\right\|_{0,\E}&\leq C\left(\int_{\partial \E}|v-v_{I}|ds+h_{\E}|v-v_{I}|_{1,\E}\right)\\
&\leq C\left(h_{\E}^{1/2}\|v-v_{I}\|_{0,\partial\E}+h_{\E}|v-v_{I}|_{1,\E}\right)\\
&\leq C\left(h_{\E}^{1/2}h_{\E}^{1/2}|v|_{1/2,\partial \E}+h_{\E}|v-v_{I}|_{1,\E}\right)\\
&\leq C\left(h_{\E}|v|_{1,\E}+h_{\E}|v-v_{I}|_{1,\E}\right)\\
%&\leq C\left(h_{\E}^{3/2}\left(h_{\E}^{-1/2}|v|_{1,\E}+h_{\E}^{s-1/2}|v|_{1+s,\E}\right)+h_{\E}|v-v_{I}|_{1,\E}\right)\\
%   &\leq C\left(h_{\E}|v|_{1,\E}+h_{\E}^{s+1}|v|_{1+s,\E}+h_{\E}|v-v_{I}|_{1,\E}\right)\\
&\leq Ch_{\E}|v|_{1+s,\E},
\end{align*}
where we have use an standard approximation estimate in one
dimension,
%($\|v-v_{I}\|_{0,\partial\E}\le h_{\E}^{1/2}|v|_{1/2,\partial \E}$,
since $v_I|_{\partial\E}$ corresponds to the standard
piecewise linear Lagrange interpolant of $v$
and then $|v|_{1/2,\partial \E}\le|v|_{1,\E}$. This concludes the proof.
\end{proof}

Now we are in position to establish the convergence in norm of $T_h$ to $T$ as
$h\to0$.
\begin{lemma}
\label{convnorm}
There exists $r\in(1/2,1]$ (cf. Lemma~\ref{LEM:REG}(i)) and $C>0$, 
independent of $h$, such that
%There exists $r_\O>1/2$ such that for all $r\in (0,r_\O)$
%(cf. Lemma~\ref{LEM:REG}(i)), There exists $C>0$,
%independent of $h$, such that
$$
\left\|\left(T-T_h\right)f\right\|_{1,\O}
\le Ch^{r}\left\|f\right\|_{1,\O}
\qquad\forall f\in\HuO.
$$
\end{lemma}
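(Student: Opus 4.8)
The plan is to combine the abstract bound from Lemma~\ref{lemcotste} with the polynomial and virtual-space approximation estimates of Lemmas~\ref{lmm:bh} and \ref{estima2}, using the elliptic regularity of the continuous solution from Lemma~\ref{LEM:REG}(i). First I would fix $f\in\HuO$ and set $w=Tf$, $w_h=T_hf$. By Lemma~\ref{LEM:REG}(i) there exists $r\in(1/2,1]$ with $w\in\HurO$ and $\|w\|_{1+r,\O}\le C\|f\|_{1,\O}$, so the regularity exponent $r$ in the statement is exactly the one supplied by that lemma. The starting point for the estimate is \eqref{eq:T-Th1}, which reduces the error $\|w-w_h\|_{1,\O}$ to four approximation terms: $\|w-w_I\|_{1,\O}$, $|w_\pi-w|_{1,h}$, $\vertiii{w-w_I}$, and $\vertiii{w-w_\pi}$, where $w_I\in\Vh$ and $w_\pi$ is elementwise in $\P_1(\E)$ are at our disposal.

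Next I would choose $w_\pi$ and $w_I$ as the approximants furnished by Lemmas~\ref{lmm:bh} and \ref{estima2} (applied with $s=r$, which satisfies $1/2<r\le1$), and then bound each of the four terms by $Ch^r\|w\|_{1+r,\O}$. The term $|w_\pi-w|_{1,h}$ is controlled directly by Lemma~\ref{lmm:bh} with $s=r$, summing $|w-w_\pi|_{1,\E}\le Ch_\E^{r}|w|_{1+r,\E}$ over the elements. For $\|w-w_I\|_{1,\O}$ I would use \eqref{firststimate} with $t=1$ (or $t\to 1$) together with \eqref{secondstimate} from Lemma~\ref{estima2}, again with $s=r$, giving the factor $h_\E^{r}$ on each element. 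The two triple-norm terms require expanding the definition \eqref{eq:triple}: I would bound $\vertiii{w-w_\pi}_{\E}^2=a^{\E}(\PiK(w-w_\pi),\PiK(w-w_\pi))+S^{\E}(w-w_\pi-\overline{(w-w_\pi)},\cdot)$ using the stability of $\PiK$ for the first part and an explicit estimate of the edge-based stabilization $S^{\E}$ from \eqref{20} for the second. The identical treatment applies to $\vertiii{w-w_I}$. In every case the key is that $w$ enjoys $H^{1+r}$ regularity, so the local approximation constants all carry the common power $h_\E^{r}$.

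The main obstacle I expect is estimating the stabilization contribution $S^{\E}(\cdot,\cdot)=h_\E\int_{\partial\E}\partial_s(\cdot)\,\partial_s(\cdot)$ inside the triple norms, since this is a \emph{boundary} (edge-derivative) term that must be bounded by an interior $H^{1+r}$ seminorm while remaining robust with respect to arbitrarily small edges. This is precisely the point where the choice of stabilization from \cite{WRR} and the small-edge analysis of \cite{BLR17,BM20} matters: I would rely on the inverse/trace inequalities for the boundary tangential derivative that are uniform under assumption \textbf{A1} only, so that no constant degenerates as an edge length tends to zero. Concretely, one reduces $h_\E\|\partial_s(w-w_I)\|_{0,\partial\E}^2$ to $h_\E\|\partial_s(w-w_I)\|_{0,\partial\E}^2$ controlled by $h_\E^{2r}|w|_{1+r,\E}^2$ via a scaled trace estimate, mirroring the one-dimensional interpolation bound used in the proof of Lemma~\ref{estima2}.

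Finally, I would collect the four element-wise bounds, sum the squares over $\E\in\CT_h$, and replace each $h_\E$ by the global maximum $h$, extracting the factor $h^{r}$. Invoking the regularity bound $\|w\|_{1+r,\O}\le C\|f\|_{1,\O}$ from Lemma~\ref{LEM:REG}(i) then yields
$$
\left\|\left(T-T_h\right)f\right\|_{1,\O}=\|w-w_h\|_{1,\O}\le Ch^{r}\|w\|_{1+r,\O}\le Ch^{r}\|f\|_{1,\O},
$$
with $C$ independent of $h$ because all intermediate constants depend only on $\g$ (through \textbf{A1}) and on $\O$. Since the bound holds for every $f\in\HuO$, this is exactly the asserted norm convergence $\|T-T_h\|\le Ch^{r}$, establishing that $T_h\to T$ in operator norm as $h\to0$.
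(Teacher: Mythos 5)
Your proposal follows the same route as the paper's proof: reduce via Lemma~\ref{lemcotste} to the four approximation terms, choose $w_\pi$ and $w_I$ from Lemmas~\ref{lmm:bh} and~\ref{estima2} with $s=r$, expand the triple norms through \eqref{eq:triple}, control the stabilization part by a trace-type argument, and conclude with Lemma~\ref{LEM:REG}(i). However, one step fails as written: for $\|w-w_I\|_{1,\O}$ you invoke \eqref{firststimate} ``with $t=1$ (or $t\to1$)''. That choice is not admissible, since \eqref{firststimate} requires $0\le t<\min\{\widehat{t},s\}\le s\le 1$; moreover $|v-v_I|_{1+t,\E}$ with $t=1$ is an $H^2$-type seminorm carrying the factor $h_\E^{s-t}\to h_\E^{s-1}$, which is useless for $s\le1$. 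What you need is $t=0$, giving $|w-w_I|_{1,\E}\le Ch_\E^{r}|w|_{1+r,\E}$, combined with \eqref{secondstimate} for the $L^2$ part. This is a slip rather than a structural flaw, but the step must be repaired.

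Second, your treatment of the stabilization contribution is right in spirit but misses the precise mechanism. To bound $h_\E|w-w_I|_{1,\partial\E}^2$ one needs interior regularity of $w-w_I$ strictly above $H^1$: the paper fixes $\sigma$ with $1/2<\sigma<r$ and uses the scaled trace inequality
$$
h_\E|w-w_I|_{1,\partial\E}^2
\le C\left(|w-w_I|_{1,\E}^{2}+h^{2\sigma}|w-w_I|_{1+\sigma,\E}^{2}\right),
$$
and it is exactly the fractional-order estimate \eqref{firststimate} with $t=\sigma$ (this is why that estimate is stated for fractional $t$ and why $\widehat{t}>1/2$ matters) that turns the right-hand side into $Ch_\E^{2r}|w|_{1+r,\E}^{2}$. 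Your appeal to ``the one-dimensional interpolation bound used in the proof of Lemma~\ref{estima2}'' points at the wrong tool: that bound, $|v|_{1/2,\partial\E}\le|v|_{1,\E}$, serves the $L^2$ estimate \eqref{secondstimate}, not the edge-derivative term. With these two repairs your argument coincides with the paper's proof.
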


\begin{proof}
The result follows from Lemma~\ref{lemcotste}. In particular, we
have to bound the term on the right and side of \eqref{eq:T-Th1}.
For the first and second terms, using Lemmas \ref{estima2} and
\ref{lmm:bh}, respectively, we obtain
\begin{equation}
\label{eq:uI}
||w-w_{I}||_{1,\O}+|w-w_{\pi}|_{1,h}\leq
C\sum_{\E\in\CT_{h}}h_{\E}^{r}|w|_{1+r}\leq Ch^{r}|w|_{1+r,\O}.
\end{equation}
Now, we bound the term $\vertiii{w-w_{I}}$. To do this task, we 
invoke the definition of $\vertiii{\cdot}$ given in \eqref{eq:triple}, \eqref{eq:fixed}
and, operating as in the proof of \cite[Theorem 4.5]{BLR17} we have
\begin{align}
\nonumber
\sum_{\E\in\CT_{h}}\vertiii{w-w_I}_{\E}^{2}&
=\sum_{\E\in\CT_{h}}\left\{a^K\left(\PiK(w-w_I), \PiK(w-w_I) \right)\right.\\\nonumber
&\left.+S^K\left((w-w_I)-\overline{(w-w_{I})},(w-w_I)-\overline{(w-w_{I})}\right)\right\}\\\nonumber
&\leq C\sum_{\E\in\CT_{h}}\left(|\PiK(w-w_I)|^2_{1,K}+S^\E(w-w_I,w-w_I) \right)\\\label{eq:inter}
&=C\sum_{\E\in\CT_{h}}\left(|w-w_I|^2_{1,K}+h_K|w-w_I|_{1,\partial \E}^2\right).
\end{align}
Let $\sigma$ be such that $1/2<\sigma<r$, using a scaled trace inequality, we get
$$h_K|w-w_I|_{1,\partial \E}^2\leq C \left(|w-w_{I}|_{1,\E}^{2}
+h^{2\sigma}|w-w_{I}|_{1+\sigma,\E}^{2}\right)\leq Ch_{\E}^{2r}|w|_{1+r,\E}^{2}.$$  
Using the above estimate in \eqref{eq:inter} and
Proposition \ref{estima2}, we obtain
\begin{align}
\label{eq:estimeui}
\vertiii{w-w_{I}}=\left(\sum_{\E\in\CT_{h}}\vertiii{w-w_I}_{\E}^{2}\right)^{1/2}
&\leq C\left(\sum_{\E\in\CT_{h}}h_{\E}^{2r}|w|_{1+r,\E}^{2}\right)\leq Ch^{r}|w|_{1+r,\O}.
\end{align}
Similarly, we obtain
\begin{align*}
\vertiii{w-w_\pi}\leq Ch^{r}|w|_{1+r,\O}.
\end{align*}
thus, the lemma follows from \eqref{eq:uI}--\eqref{eq:estimeui}
and Lemma~\ref{LEM:REG}(i).
% Lemma~\ref{lemcotste},
%Propositions~\ref{estima1} and \ref{estima2}, and
%Lemma~\ref{LEM:REG}(i).
\end{proof}

% --------------------------------------

%\section{Error estimates}
%\label{SEC:BUCKL:SPEC}

We conclude the analysis of our paper deriving error estimates for our method.
In particular, we are going to present error estimates for eigenfunctions
and eigenvalues. With this aim, with Lemma~\ref{convnorm} at hand,
we will prove that isolated parts of $\sp(T)$ are approximated
by isolated parts of $\sp(T_h)$ (see \cite{K}).

Let $\mu\in(0,1)$ be an isolated eigenvalue of $T$ with
multiplicity $m$ and let $\CE$ be its associated eigenspace. Then, there
exist $m$ eigenvalues $\mu^{(1)}_h,\dots,\mu^{(m)}_h$ of $T_h$ (repeated
according to their respective multiplicities) which converge to $\mu$. From now and on, let $\CE_h$ be the discrete subspace associated to $\CE$, corresponding to the direct sum of their corresponding associated eigensapaces.

%\FL{As a direct consequence of Lemma~\ref{convnorm}, together with standard results about
%spectral approximation (see \cite{K}, for instance) show that isolated
%parts of $\sp(T)$ are approximated by isolated parts of $\sp(T_h)$. More
%precisely, let $\mu\in(0,1)$ be an isolated eigenvalue of $T$ with
%multiplicity $m$ and let $\CE$ be its associated eigenspace. Then, there
%exist $m$ eigenvalues $\mu^{(1)}_h,\dots,\mu^{(m)}_h$ of $T_h$ (repeated
%according to their respective multiplicities) which converge to $\mu$. Let $\CE_h$ be the direct sum of their corresponding associated
%eigenspaces.}

We recall the definition of the \textit{gap} $\hdel$ between two closed
subspaces $\CM$ and $\CN$ of $\HuO$:
$$
\hdel(\CM,\CN)
:=\max\left\{\delta(\CM,\CN),\delta(\CN,\CM)\right\},
\quad\text{where}\quad
\delta(\CM,\CN)
:=\sup_{x\in\CM:\ \left\|x\right\|_{1,\O}=1}
\left(\inf_{y\in\CN}\left\|x-y\right\|_{1,\O}\right).
$$
The following error estimates for the approximation of eigenvalues and
eigenfunctions hold true.

\begin{theorem}
\label{gap}
There exists a strictly positive constant $C$ such that
\begin{align*}
\hdel(\CE,\CE_h) 
& \le C\g_h,
\\
\left|\mu-\mu_h^{(i)}\right|
& \le C\g_h,\qquad i=1,\dots,m,
\end{align*}
where
$$
\g_h
:=\sup_{f\in\CE:\ \left\|f\right\|_{1,\O}=1}
\left\|(T-T_h)f\right\|_{1,\O}.
$$
\end{theorem}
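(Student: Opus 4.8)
The plan is to obtain both estimates as a direct application of the Babu\v{s}ka--Osborn spectral approximation theory for compact self-adjoint operators \cite{BO}, feeding in the norm convergence already proved in Lemma~\ref{convnorm} as the single quantitative ingredient. Recall that $T$ is compact and self-adjoint with respect to $\ha(\cdot,\cdot)$, that $T_h$ is self-adjoint with respect to $\ha_h(\cdot,\cdot)$, and that by Lemmas~\ref{ha-elipt} and \ref{ha-elipt-disc} both forms are uniformly equivalent to the $\HuO$ inner product, so that all abstract hypotheses hold with constants independent of $h$. I would fix a circle $\mathscr{C}$ in the complex plane centered at $\mu$ that encloses no other point of $\sp(T)$ (possible since $\mu\in(0,1)$ is isolated of finite multiplicity $m$), and introduce the associated Riesz spectral projectors
\begin{equation*}
E:=\frac{1}{2\pi i}\oint_{\mathscr{C}}(z-T)^{-1}\,dz,
\qquad
E_h:=\frac{1}{2\pi i}\oint_{\mathscr{C}}(z-T_h)^{-1}\,dz .
\end{equation*}
By Lemma~\ref{convnorm}, $T_h\to T$ in operator norm, so for $h$ small enough $\mathscr{C}$ lies in the resolvent set of $T_h$; the range of $E_h$ is then precisely $\CE_h$, the direct sum of the eigenspaces of the $m$ discrete eigenvalues enclosed by $\mathscr{C}$, confirming $\dim\CE_h=m=\dim\CE$.

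The central step is to bound the projector difference restricted to $\CE$. Using the resolvent identity $(z-T_h)^{-1}-(z-T)^{-1}=(z-T_h)^{-1}(T_h-T)(z-T)^{-1}$ together with the uniform boundedness of the resolvents along $\mathscr{C}$, and observing that for $\phi\in\CE$ one has $(z-T)^{-1}\phi=(z-\mu)^{-1}\phi\in\CE$, I would estimate
\begin{equation*}
\left\|(E-E_h)|_{\CE}\right\|
\le C\sup_{z\in\mathscr{C}}\left\|(T-T_h)(z-T)^{-1}|_{\CE}\right\|
\le C\,\g_h,
\end{equation*}
where only the action of $T-T_h$ on $\CE$ enters, which is exactly $\g_h$. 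Since $E|_{\CE}$ is the identity on $\CE$, the first half of the gap follows from
\begin{equation*}
\delta(\CE,\CE_h)\le\left\|(I-E_h)|_{\CE}\right\|
=\left\|(E-E_h)|_{\CE}\right\|\le C\,\g_h .
\end{equation*}
For the reverse inequality $\delta(\CE_h,\CE)$ I would invoke the equality of dimensions $\dim\CE_h=\dim\CE$: once $\delta(\CE,\CE_h)$ is smaller than $1$, the two one-sided gaps between equidimensional finite-dimensional subspaces are comparable, so $\delta(\CE_h,\CE)\le C\,\delta(\CE,\CE_h)\le C\g_h$, giving $\hdel(\CE,\CE_h)\le C\g_h$.

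For the eigenvalue estimate I would use the self-adjoint form of the Babu\v{s}ka--Osborn theorem: each discrete eigenvalue $\mu_h^{(i)}$ enclosed by $\mathscr{C}$ satisfies $|\mu-\mu_h^{(i)}|\le C\left\|(T-T_h)|_{\CE}\right\|=C\,\g_h$, which is the first-order bound claimed (the sharper double-order estimate, exploiting symmetry, being the subject of the subsequent analysis). I expect the only genuine subtlety to be that $T$ and $T_h$ are self-adjoint with respect to the two different inner products $\ha(\cdot,\cdot)$ and $\ha_h(\cdot,\cdot)$; I would handle this by carrying out all resolvent estimates in the fixed $\HuO$-norm and invoking the $h$-uniform equivalence constants from Lemmas~\ref{ha-elipt} and \ref{ha-elipt-disc} so that no constant degenerates as $h\to0$. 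The main analytical difficulty of the whole argument — the quantitative norm convergence $T_h\to T$ — has in fact already been cleared in Lemma~\ref{convnorm}, so that what remains here is the routine resolvent algebra of \cite{BO,K} outlined above.
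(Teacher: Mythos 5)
Your proposal is correct and takes essentially the same route as the paper: the paper's proof consists precisely of invoking the norm convergence from Lemma~\ref{convnorm} and then citing Theorems~7.1 and 7.3 of \cite{BO}. The Riesz-projector and resolvent-identity argument you sketch (including the equidimensionality step and the uniform treatment of the two inner products via Lemmas~\ref{ha-elipt} and \ref{ha-elipt-disc}) is just an unfolding of the standard proof of those Babu\v{s}ka--Osborn theorems, so the two arguments coincide in substance.
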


\begin{proof}
As a consequence of Lemma~\ref{convnorm}, $T_h$ converges in norm to $T$
as $h$ goes to zero. Then, the proof follows as a direct consequence of
Theorems~7.1 and 7.3 from \cite{BO}.
\end{proof}

The theorem above yields error estimates depending on $\g_h$. The next
step is to show an optimal order estimate for this term. 

\begin{theorem}
\label{order}
There exist $r\in(1/2,1]$ and a positive constant $C$
such that
\begin{equation*}
\label{cota}
\left\|(T-T_h)f\right\|_{1,\O}
\leq Ch^{r}
\left\|f\right\|_{1,\O}
\qquad\forall f\in\CE,
\end{equation*}
and, consequently,
\begin{equation*}
\label{cotgap}
\g_h\le Ch^{r}.
\end{equation*}
\end{theorem}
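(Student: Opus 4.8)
The plan is to recognize that Theorem~\ref{order} is, at its core, nothing more than the restriction of the norm--convergence estimate of Lemma~\ref{convnorm} to the finite--dimensional eigenspace $\CE$. Since $\CE\subset\HuO$, the bound
$\|(T-T_h)f\|_{1,\O}\le Ch^{r}\|f\|_{1,\O}$ proved in Lemma~\ref{convnorm} holds for every $f\in\HuO$, and in particular for every $f\in\CE$, with the same exponent $r\in(1/2,1]$ inherited from the regularity result Lemma~\ref{LEM:REG}. So the first displayed estimate would follow essentially by citation, and the whole purpose of the theorem is to package this as a concrete rate for the abstract quantity $\g_h$ appearing in Theorem~\ref{gap}.

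To make the argument self--contained I would spell out why the regularity exponent is the one claimed. For $f\in\CE$ we have $Tf=\mu f$, so $w:=Tf$ is itself an element of $\CE$, i.e.\ an eigenfunction (or combination thereof). By Lemma~\ref{LEM:REG}(ii) every such eigenfunction lies in $\HurO$ and satisfies $\|w\|_{1+r,\O}\le C\|w\|_{1,\O}\le C\|f\|_{1,\O}$, where the last inequality uses the boundedness of $T$. This supplies exactly the $H^{1+r}$--regularity of $w$ that Lemma~\ref{convnorm} exploits through Lemma~\ref{LEM:REG}(i); re--running the chain of estimates there (the interpolation bound of Lemma~\ref{estima2}, the polynomial approximation of Lemma~\ref{lmm:bh}, and the bound \eqref{eq:estimeui} on $\vertiii{w-w_I}$) yields $\|(T-T_h)f\|_{1,\O}\le Ch^{r}\,|w|_{1+r,\O}\le Ch^{r}\|f\|_{1,\O}$. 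The consequence for $\g_h$ is then immediate: taking the supremum over $f\in\CE$ with $\|f\|_{1,\O}=1$ in the definition of $\g_h$ gives $\g_h\le Ch^{r}$.

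I do not expect a substantive obstacle here, since the analytic heavy lifting was already carried out in Lemma~\ref{convnorm}. The only point deserving a moment's care is the uniformity of the regularity constant over the whole eigenspace: the constant in Lemma~\ref{LEM:REG}(ii) depends on the eigenvalue $\l$, but as $\CE$ is finite--dimensional and associated with a single isolated $\mu=1/(1+\l)$, this dependence is harmless and yields a constant $C$ uniform on the unit sphere of $\CE$. Thus the main ``difficulty'' is purely organizational---correctly identifying that the rate delivered by the norm--convergence lemma is already optimal and transferring it to $\g_h$.
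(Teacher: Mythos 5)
Your proposal is correct, and it is in fact more self-contained than what the paper does: the paper's entire proof of Theorem~\ref{order} is the citation ``See \cite[Theorem 4.2]{MRR1}'', outsourcing the argument to the earlier VEM paper for the Steklov problem, whose proof runs exactly along the lines of your second paragraph (eigenfunction regularity on the finite-dimensional eigenspace, with constants uniform because $\CE$ is associated with a single isolated eigenvalue). Two remarks on the comparison. First, for the statement \emph{as literally written} (mere existence of some $r\in(1/2,1]$), your one-line route --- restrict Lemma~\ref{convnorm} to $\CE\subset\HuO$ and take the supremum in the definition of $\g_h$ --- is already a complete proof, so the theorem is formally a corollary of Lemma~\ref{convnorm}; the paper could have said this instead of citing \cite{MRR1}. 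Second, the reason the theorem is stated separately (and the reason \cite{MRR1} proves it on $\CE$ rather than on all of $\HuO$) is the distinction between the two exponents in Lemma~\ref{LEM:REG}: the restriction argument alone only delivers the source-problem exponent of part (i), whereas the intended $r$ here is the eigenfunction exponent of part (ii), which can be larger (capped at $1$ for this lowest-order scheme) and is the one that feeds into the double-order estimate $h^{2r}$ of Theorem~\ref{cotadoblepandeo}. Your second paragraph supplies precisely this refinement --- $w=Tf=\mu f\in\CE$, apply Lemma~\ref{LEM:REG}(ii), then re-run the estimates of Lemma~\ref{convnorm} with $\left\|w\right\|_{1+r,\O}\le C\left\|f\right\|_{1,\O}$ --- so your write-up captures both the trivial version and the substantive one; just be aware that the first paragraph by itself would not justify the stronger reading of the theorem.
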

\begin{proof}
See \cite[Theorem 4.2]{MRR1}.
\end{proof}

The error estimate for the eigenvalue $\mu\in(0,1)$ of $T$ leads to an
analogous estimate for the approximation of the eigenvalue
$\l=\frac1{\mu}-1$ of Problem~\ref{P1} by means of the discrete
eigenvalues $\l_h^{(i)}:=\frac1{\mu_h^{(i)}}-1$, $1\le i\le m$, of
Problem~\ref{P11}. 

We are able to improve the convergence order of 
Theorem~\ref{gap} for the eigenvalues. The following result shows in fact
that the convergence order is double.

\begin{theorem}
\label{cotadoblepandeo}
There exist $r\in(1/2,1]$ and a positive constant $C$
such that
$$\
\left|\l-\l_h^{(i)}\right|
\le Ch^{2r}.
$$
\end{theorem}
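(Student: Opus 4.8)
The plan is to establish the double-order convergence for the eigenvalues by exploiting the self-adjointness of both $T$ and $T_h$ together with the abstract spectral framework of Babu\v ska-Osborn. Since $\l=\frac{1}{\mu}-1$ and $\l_h^{(i)}=\frac{1}{\mu_h^{(i)}}-1$, and both $\mu,\mu_h^{(i)}$ are bounded away from zero in a neighborhood of the isolated eigenvalue $\mu\in(0,1)$, it suffices to prove a double-order estimate for the eigenvalue difference at the level of the operators $T$ and $T_h$, namely $|\mu-\mu_h^{(i)}|\le Ch^{2r}$. The standard device here is the abstract estimate (Theorem~7.3 in \cite{BO}): for a self-adjoint operator, the eigenvalue error is controlled by terms of the form $|((T-T_h)\phi,\psi)|$ where $\phi,\psi$ range over the eigenspace $\CE$ (suitably normalized), rather than by the operator norm alone. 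The key observation is that each such bilinear quantity factorizes into a product of two approximation errors, each of order $h^{r}$, producing the square.

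First I would recall the spectral-projection error formula. Writing $P$ for the spectral projector onto $\CE$ and $P_h$ for the discrete one, Theorem~7.3 of \cite{BO} gives, for the self-adjoint case,
\begin{equation*}
\left|\mu-\mu_h^{(i)}\right|
\le C\left(\sup_{\phi,\psi\in\CE,\ \|\phi\|_{1,\O}=\|\psi\|_{1,\O}=1}
\left|\ha\big((T-T_h)\phi,\psi\big)\right|
+\g_h^2\right),
\end{equation*}
where the $\g_h^2$ contribution is already of the desired order $h^{2r}$ by Theorem~\ref{order}. The heart of the argument is therefore to bound the bilinear term $\ha((T-T_h)\phi,\psi)$ by $Ch^{2r}$. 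Here I would use the Galerkin-type orthogonality satisfied by $T$ and $T_h$: for $\phi,\psi\in\CE$, setting $w=T\phi$, $w_h=T_h\phi$, and $z=T\psi$, $z_h=T_h\psi$, one rewrites $\ha(w-w_h,z)$ using the defining identities \eqref{T1} and the discrete source problem. Because $w-w_h$ and $z-z_h$ are both errors of order $h^{r}$ in the energy-type norm (by Lemma~\ref{convnorm} and Theorem~\ref{order}), after inserting the interpolants and polynomial projections and using the consistency relations \eqref{eqrefgt1}--\eqref{eqrefgt3} one peels off a factor $\|z-z_h\|$ multiplying a factor $\|w-w_h\|$, giving the product $h^{r}\cdot h^{r}=h^{2r}$.

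The main obstacle I anticipate is handling the \emph{nonconformity} introduced by the stabilization: because $a_h^{\E}$ differs from $a^{\E}$, the usual clean symmetry argument that makes $\ha((T-T_h)\phi,\psi)$ collapse to a product of two energy errors is polluted by consistency terms of the form $\sum_{\E}(a_h^{\E}-a^{\E})(\cdots)$. I would have to show that these variational-crime terms are themselves of order $h^{2r}$, which is exactly where the approximation properties \eqref{eqrefgt1}--\eqref{eqrefgt3} of the triple-bar seminorm and the polynomial-consistency of $\PiK$ must be invoked carefully; each such term should again factor as a product of an $O(h^{r})$ projection-consistency error against an $O(h^{r})$ solution error. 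Once all three kinds of contributions — the symmetric product term, the stabilization-consistency terms, and the residual $\g_h^2$ — are bounded by $Ch^{2r}$, the estimate $|\mu-\mu_h^{(i)}|\le Ch^{2r}$ follows, and transferring back through $\l=\frac{1}{\mu}-1$ (whose derivative is bounded on the relevant compact interval away from $\mu=0$) yields $|\l-\l_h^{(i)}|\le Ch^{2r}$, completing the proof.
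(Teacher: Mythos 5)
Your strategy is sound but genuinely different from the paper's. The paper never invokes the operator-level double-order machinery of \cite{BO}; instead it argues directly with the eigenpairs: given a discrete eigenpair $(\l_h^{(i)},u_h)$ with $\|u_h\|_{1,\O}=1$, Theorems~\ref{gap} and \ref{order} provide a continuous eigenpair $(\l,u)$ with $\|u-u_h\|_{1,\O}\le Ch^{r}$, and the symmetry of the forms together with the two eigenvalue equations yields the algebraic identity
\[
\bigl(\l_h^{(i)}-\l\bigr)b(u_h,u_h)=a(u-u_h,u-u_h)-\l\,b(u-u_h,u-u_h)+\bigl[a_h(u_h,u_h)-a(u_h,u_h)\bigr].
\]
The first two terms are $O(h^{2r})$ by continuity and the eigenfunction estimate; the third is exactly the variational-crime quantity you identify, bounded by $Ch^{2r}$ through the polynomial consistency of $a_h^{\E}$, the bound \eqref{eq:uh-uI}, Lemmas~\ref{lmm:bh} and \ref{estima2}, and the regularity of Lemma~\ref{LEM:REG}(ii); finally $b(u_h,u_h)$ is bounded below by a positive constant via Lemma~\ref{ha-elipt-disc}. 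Both routes ultimately hinge on the same consistency estimate, but the paper's identity needs it only for the single discrete eigenfunction $u_h$, whereas your route needs the bilinear terms $\ha((T-T_h)\phi,\psi)$ for all pairs in $\CE$ plus a separate treatment of the remainder; in exchange, your argument is the one that transfers verbatim to abstract spectral-approximation settings.

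There is one step you must repair. You quote the estimate of \cite{BO} ``for a self-adjoint operator'' with remainder $\g_h^2$, but that clean form requires $T$ and $T_h$ to be self-adjoint with respect to the \emph{same} inner product. Here $T$ is self-adjoint for $\ha(\cdot,\cdot)$ while $T_h$ is self-adjoint only for the mesh-dependent form $\ha_h(\cdot,\cdot)$; the variational crime therefore contaminates the hypotheses of the abstract theorem itself, not merely the bound on the bilinear term, and this is precisely why the paper (following \cite{MRR1}) proves the double order by hand. The remedy is to use the general (non-selfadjoint) version of the eigenvalue estimate, whose remainder is $\|(T-T_h)|_{\CE}\|\,\|(T^{*}-T_h^{*})|_{\CE^{*}}\|$: since $T^{*}=T$ and $\CE^{*}=\CE$, and since $\|(T-T_h)^{*}|_{\CE}\|\le\|T-T_h\|_{\mathcal{L}(\HuO)}\le Ch^{r}$ by Lemma~\ref{convnorm} (note that the restriction of the adjoint difference to $\CE$ is \emph{not} controlled by $\g_h$ alone), the remainder is still $O(h^{2r})$. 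Because $T$ is compact and $\ha$-self-adjoint, the ascent of $\mu-T$ equals one, so the individual eigenvalues $\mu_h^{(i)}$, and not only their arithmetic mean, inherit the rate; the transfer to $\l_h^{(i)}=1/\mu_h^{(i)}-1$ then works as you describe. With these corrections your argument goes through.
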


\begin{proof}
Let $u_h$ be such that $(\l_h^{(i)},u_h)$ is a solution of
Problem~\ref{P11} with $\left\|u_h\right\|_{1,\O}=1$. According to
Theorems~\ref{gap} and\ref{order}, there exists a solution
$(\l,u)$ of Problem~\ref{P1} such that 
\begin{equation}
\label{fin4}
\left\|u-u_h\right\|_{1,\O}\le Ch^{r}.
\end{equation}

From the symmetry of the bilinear forms and the facts that 
$a(u,v)=\l b(u,v)$ for all $v\in\HuO$ (cf. Problem~\ref{P1}) and 
$a_h(u_h,v_h)=\l_h^{(i)}b(u_h,v_h)$ for all $v_h\in\Vh$ (cf.
Problem~\ref{P11}), we have
\begin{align*}
a(u-u_h,u-u_h)-\l b(u-u_h,u-u_h)
& =a(u_h,u_h)-\l b(u_h,u_h)
\\
& =\left[a(u_h,u_h)-a_h(u_h,u_h)\right]
-\left(\l-\l_h^{(i)}\right)b(u_h,u_h),
\end{align*}
from which we obtain the following identity:
\begin{equation}
\label{45}
\left(\l_h^{(i)}-\l\right)b(u_h,u_h)
=a(u-u_h,u-u_h)-\l b(u-u_h,u-u_h)
+\left[a_h(u_h,u_h)-a(u_h,u_h)\right].
\end{equation}

The next step is to bound each term on the right hand side above. The
first and the second ones are easily bounded from the continuity of
$a(\cdot,\cdot)$ and $b(\cdot,\cdot)$, the trace theorem and
\eqref{fin4} as follows
\begin{equation}
\label{terms_12}
\left|a(u-u_h,u-u_h)\right|
+\l\left|b(u-u_h,u-u_h)\right|
\le Ch^{2r}.
\end{equation}

For the last term on the right hand side of \eqref{45},
we consider $u_\pi\in\P_1(\E)$ and $u_I\in V_h$
such that Lemmas~\ref{lmm:bh} and \ref{estima2} hold true, respectively.
Using standard arguments, we have 
\begin{align*}
\left|a_h(u_h,u_h)-a(u_h,u_h)\right|&
=\left| \sum_{\E\in\CT_{h}} a_{h}^{\E}(u_h-u_\pi,u_h-u_\pi)-a^{\E}(u_h-u_\pi,u_h-u_\pi)\right|\\
& \leq\sum_{\E\in\CT_{h}} \vertiii{u_h-u_\pi}_{\E}^2+\sum_{\E\in\CT_{h}}|u_h-u_\pi|_{1,\E}^2\\
&=\vertiii{u_h-u_\pi}^2+|u_h-u_\pi|_{1,h}^2\\
&\leq C\left(\vertiii{u-u_h}^2+\vertiii{u-u_\pi}^2+|u-u_h|^2_{1,h}+|u-u_\pi|^2_{1,h} \right)\\
&\leq C\left(\vertiii{u-u_I}^2+\vertiii{u_{h}-u_I}^2
+\vertiii{u-u_\pi}^2+|u-u_h|^2_{1,h}+|u-u_\pi|^2_{1,h} \right)\\
%&\leq C\left(\vertiii{u-u_I}^2+\vertiii{u-u_\pi}^2+\|u-u_{I}\|_{1,\O}^{2}
%+|u-u_{\pi}|_{1,h}^{2}+|u_h-u_\pi|^2_{1,\O} \right)\\
&\leq C\left(\vertiii{u-u_I}^2+\vertiii{u-u_\pi}^2
+\|u-u_{I}\|_{1,\O}^{2}+|u-u_{\pi}|_{1,h}^{2}+|u_h-u|^2_{1,\O} \right),
\end{align*}
where we have used \eqref{eq:uh-uI}.
Now, the terms on the right hand side above
can be bounded repeating
the argument in the proof of Lemma~\ref{convnorm}
and using the additional regularity result
in Lemma~\ref{LEM:REG}(ii). We get
\begin{align}
\label{ultimaeq}
 \left|a_h(u_h,u_h)-a(u_h,u_h)\right|\leq Ch^{2r}.
 \end{align}
%For the first term, we procede 
%invoking the definition of $\vertiii{\cdot}$ given in \eqref{eq:triple} and operating as in the proof of \cite[Theorem 4.5]{BLR17},
%obtaining
%%\begin{align}\nonumber
%\sum_{\E\in\CT_{h}}\vertiii{u-u_h}_{\E}&=\sum_{\E\in\CT_{h}}\left\{S^K\left((I-\mathcal{R})(u-u_h), (I-\mathcal{R})(u-u_h)\right)-a^K\left(\Pi_k^K(u-u_h), \Pi_k^K(u-u_h) \right)\right\}\\\nonumber
%&\leq C\sum_{\E\in\CT_{h}}\left(S^K(u-u_h, u-u_h)+|\Pi_k^K(u-u_h,)|^2_{1,K} \right)\\\nonumber
%&=C\sum_{\E\in\CT_{h}}\left(h_K|u-u_h|_{1,\partial K}^2+|u-u_h|^2_{1,K}\right)\\\nonumber
%&\leq C\sum_{\E\in\CT_{h}}\left(h_K\left(h_K^{-1}|u-u_h|_{1,K}^2+h_K^{2r-1}|u-u_h|_{1+r,K}^2\right)+|u-u_h|^2_{1,K}  \right)\\
%&\leq C\sum_{\E\in\CT_{h}}\left(h_K^{2r}|u|_{1+r,K}^2+h_K^{2r}|u-u_h|_{1+s,K}^2 \right)
%\end{align}
%Now, also from \eqref{numero} it is immediate to check that
%$$
%\left|u-\PiK u\right|_{1,\E}
%\leq\left|u-u_{\pi}\right|_{1,\E}
%\qquad\forall u_{\pi}\in\P_k(\E).
%$$
%Then, from the last two inequalities, Proposition~\ref{estima1},
%\eqref{fin4} and \eqref{cotgap}, we obtain
%$$
%\left|a_h(u_h,u_h)-a(u_h,u_h)\right|
%\le Ch^{2\min\{r,k\}}.
%$$

On the other hand, by virtue of Lemma~\ref{ha-elipt-disc} and the fact
that $\l_h^{(i)}\to\l$ as $h$ goes to zero, we know that there exists
$C>0$ such that
$$
b(u_h,u_h)
=\frac{\ha_h(u_h,u_h)}{\l_h^{(i)}+1}
\ge\frac{\beta\left\|u_h\right\|_{1,\O}^2}{\l_h^{(i)}+1}
\ge\frac{\beta}{C}>0.
$$
Finally, the proof follows from \eqref{45}, by
using the above estimate together with
\eqref{terms_12} and \eqref{ultimaeq}.

%By using this estimate to bound the left-hand side of \eqref{45}  from
%below, together with the previous one, \eqref{terms_12}  and \eqref{ultimaeq} for an upper
%bound of the right-hand side we conclude the proof
%$$
%\left|\l-\l_h^{(i)}\right|
%\leq Ch^{2r}
%$$
%and we end the proof.
\end{proof}
\section{Numerical experiments}

\label{sec:numerics}
In the present section we will report some numerical
tests in order to asses the performance of the proposed lowest order VEM
with meshes allowing small edges. All the reported numerical
results have been obtained with a MATLAB code.
%, meanwhile the
%meshes have been constructed with \emph{PolyMesher} \cite{malla}.
In order to observe the performance and accuracy of the proposed method,
we will consider different computational domains,
where the eigenfunctions, on one hand, can be smooth enough and,
on the other, can be singular due to the non-convex domains.
 
For all the tests, we will report the computed eigenvalues
for different polygonal meshes and the order of convergence.
%For the implementation, we will consider the lowest order for the approximation.
Our results will be compared with some references
and exact solutions in the cases where it is available.
In the cases where it is not possible to have a close form of the solution,
we will present extrapolated values for the eigenvalues (see \eqref{eq:fitting}).

\subsection{Square domain: the sloshing problem.}\label{sectionslosh}

We begin with a convex domain. In this case we consider
$\Omega:=(0,1)^2$ as computational domain. 
We fix $\Gamma_0$ on the top of the boundary
(representing a free surface)
and $\Gamma_1$ will the rest of the boundary.
In Figure~\ref{FIG:SLOSH}, we present the physical configuration
of the problem.
\vspace*{0.25cm}
\begin{figure}[H]
\begin{center}
\input{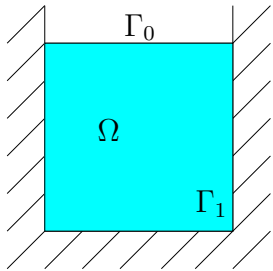}
\caption{Sloshing in a square domain.}
\label{FIG:SLOSH}
\end{center}
\end{figure}

%will be the top 
%On the bottom
%of the square will consider homogeneous boundary condition,
%corresponding to $\Gamma_1$,
%meanwhile $\Gamma_0$ will be the horizontal part
%of the boundary representing a free surface.

For this problem there are analytical solutions of the form
\begin{equation}
\label{eq:exact_sloshing}
\lambda_n=n\pi\tanh(n\pi), \quad u_n(x,y)=\cos(n\pi x)\sinh (n\pi x), \quad n\in\mathbb{N}.
\end{equation}

In Figure~\ref{FIG:meshes}, we present plots of the polygonal meshes that
we will consider for our tests. We note that the family of polygonal meshes $\CT_h^1$
have been obtained by gluing two different polygonal meshes at $y=0.6$.
It can be seen that very small edges compared with the element
diameter appears on the interface of the resulting mesh.
The second family of polygonal meshes $\CT_h^2$ have been obtained
from a triangular mesh with an additional point on
each edge as a new degree of freedom which has been moved
to a distance $h_e^{2}$ from one vertex and $(h_{e}-h_{e}^{2})$
from the other. We observe that this family satisfy $\mathbf{A1}$
but fail to satisfy the usual assumption that distance between
any two of its vertices is greater than or equal to $Ch_{\E}$ for each polygon,
since the length of the smallest edge is $h_e^2$, while the diameter
of the element is bounded above by a multiple of $h_e$.
The refinement level for the meshes will be denoted by $N$,
which corresponds to the number of subdivisions in the abscissae. 
\begin{figure}[H]
\begin{center}
\begin{minipage}{15cm}
%\centering\includegraphics[height=4.5cm, width=4.5cm]{tablasymallas/thao1.eps}\\
\centering\includegraphics[height=6.5cm, width=6.5cm]{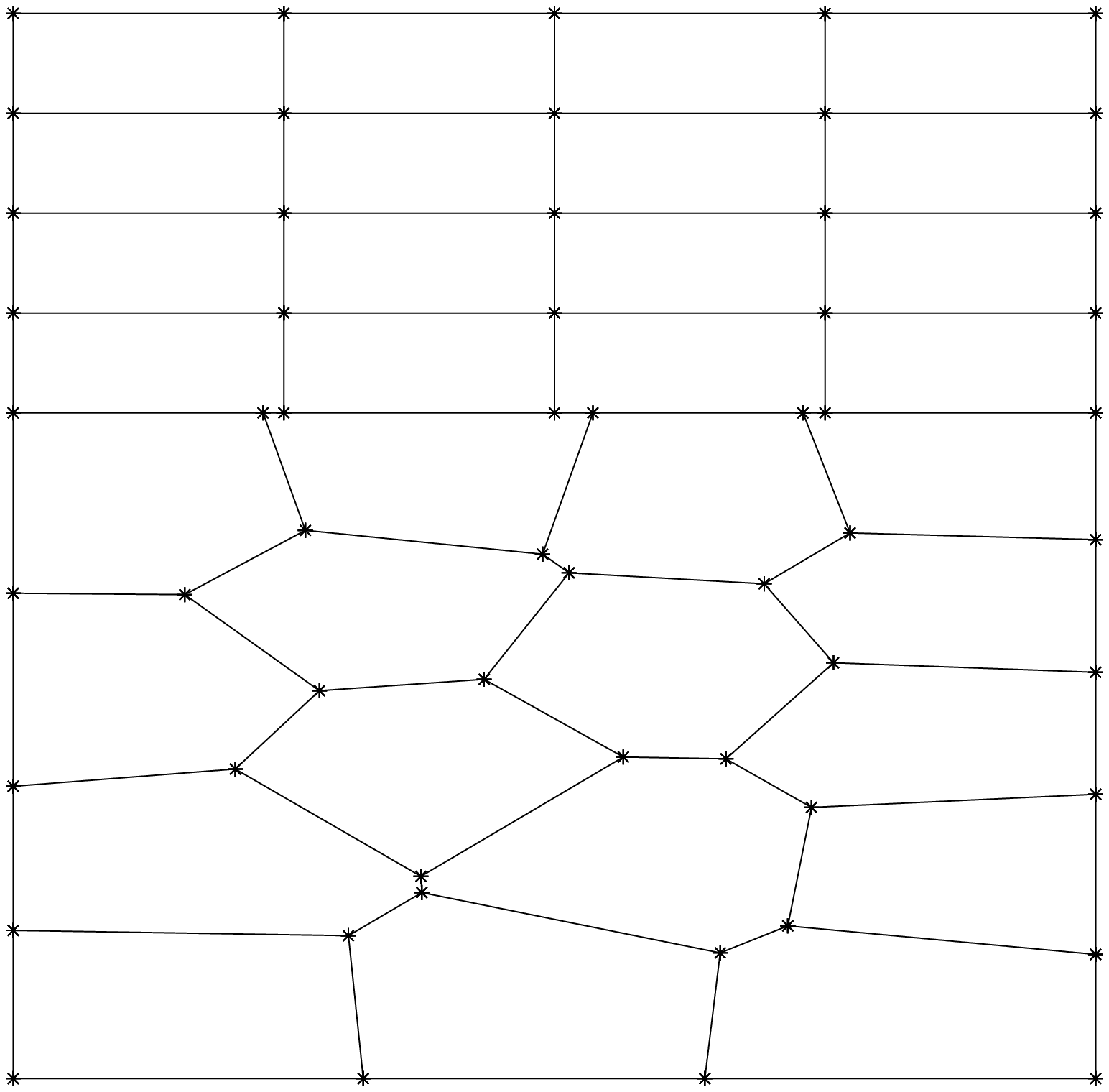}
\centering\includegraphics[height=6.5cm, width=6.5cm]{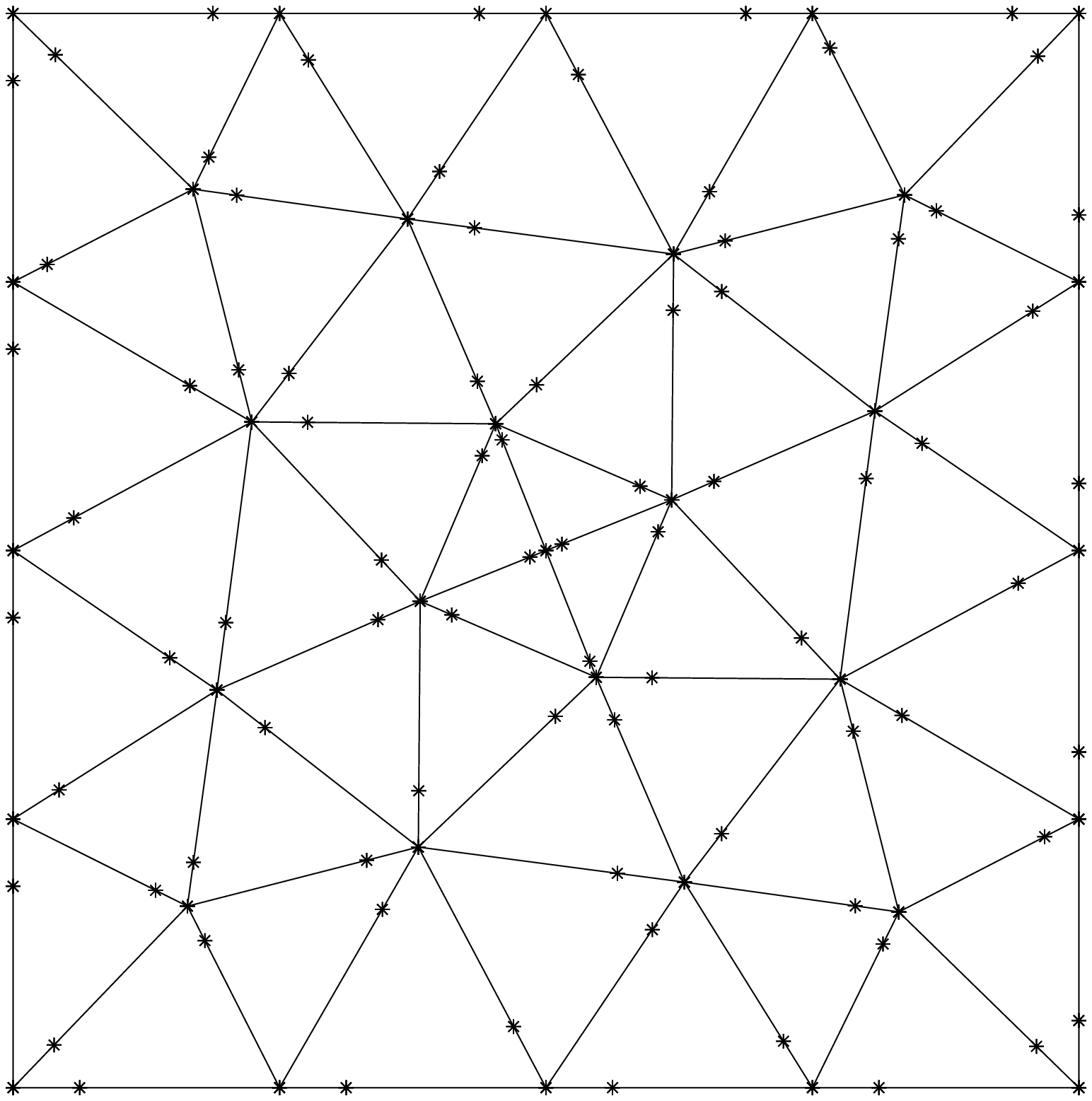}
%\centering\includegraphics[height=4.5cm, width=4.5cm]{tablasymallas/thao4.eps}
%\centering\includegraphics[height=4.5cm, width=4.5cm]{tablasymallas/thao5.eps}
%\centering\includegraphics[height=4.5cm, width=4.5cm]{tablasymallas/thao3T.eps}
\end{minipage}
\caption{Sample meshes with small edges. From left to right:
$\CT_h^1$  and $\CT_h^2$ for $N=4$.}
\label{FIG:meshes}
\end{center}
\end{figure}

%Since we consider the meshes in Figure \ref{FIG:meshes}, it is clear that when these meshes are refined, 
%there will be polygons with edges, since the presence of nodes that are very close from each other. 

%We observe from figure above the smallness of some edges in the polygons
%of the meshes. Clearly when the meshes are refined, these edges
%tend to smaller sizes. In Figure \ref{FIG:meshesN} we present a section for $\CT_h^2$.

%\begin{figure}[H]
%\begin{center}
%\begin{minipage}{15cm}
%\centering\includegraphics[height=6.5cm, width=6.5cm]{tablasymallas/ladosthao3.eps}
%\end{minipage}
%\caption{Mesh section}
%\label{FIG:meshesN}
%\end{center}
%\end{figure}

%Figure \ref{FIG:meshesN} plots the section at $x=0$
%   for $\CT_h^2$. The star dots represent the nodes of the edges. 

In Table~\ref{tabla1}, we report the first six eigenvalues computed with
meshes $\CT_h^1$  and $\CT_h^2$. The row 'Order' reports the convergence
order of the eigenvalues, computed with respect to the exact ones obtained with
\eqref{eq:exact_sloshing}, which are presented in the row 'Exact'.

\begin{table}[H]
\begin{center}
\caption{Test1. The lowest computed eigenvalues
$\l_{hi}$, $1\le i\le 6$ for different meshes.}
\vspace{0.3cm}
\begin{tabular}{|c|c|c|c|c|c|c|}
\hline
%\multicolumn{7}{|c|}{$\CT_{h}^{1}$} \\\hline
% $N$ & $\l_{h1}$ & $\l_{h2}$ & $\l_{h3}$ & 
% $\l_{h4}$ & $\l_{h5}$& $\l_{h6}$  \\
%\hline
%8    &3.2361   &7.1230  &12.3207  &19.6618  &29.9506  &43.1687  \\
%16   &3.1567   &6.4894  &10.1222  &14.2345  &19.0063  &24.6343 \\
%32   &3.1365   &6.3344  & 9.5975  &12.9765  &16.5115  &20.2433 \\
%64   &3.1314   &6.2960  & 9.4679  &12.6685  &15.9076  &19.1948 \\\hline
%Order   &1.99   &2.01  & 2.02  & 2.04  & 2.05  & 2.05  \\\hline
%Exact &3.1299  &  6.2831 &   9.4248 &  12.5664 &  15.7080 &  18.8496 \\
%\hline
\multicolumn{7}{|c|}{$\CT_{h}^{1}$} \\\hline
 $N$ & $\l_{h1}$ & $\l_{h2}$ & $\l_{h3}$ & 
 $\l_{h4}$ & $\l_{h5}$& $\l_{h6}$  \\
 \hline
8    &3.2422   &7.1802  &12.5152  &19.0595  &32.2326  &46.9310  \\
16   &3.1572   &6.4936  &10.1363  &14.2689  &19.0754  &24.7519  \\
32   &3.1366   &6.3347   &9.5984  &12.9787  &16.5158  &20.2508  \\
64   &3.1316   &6.2960   &9.4679  &12.6686  &15.9078  &19.1953  \\\hline
Order  &2.02   &2.04   &2.05   &2.00  & 2.12  & 2.12   \\\hline
Exact &3.1299  &  6.2831  &  9.4248  & 12.5664  & 15.7080 &  18.8496  \\
\hline
\multicolumn{7}{|c|}{$\CT_{h}^{2}$} \\\hline
 $N$ & $\l_{h1}$ & $\l_{h2}$ & $\l_{h3}$ & 
 $\l_{h4}$ & $\l_{h5}$& $\l_{h6}$  \\
 \hline
8    &3.1820  & 6.7247  &10.9616  &16.4556  &23.3067  &31.7699  \\
16   &3.1441  & 6.4043   &9.8511  &13.6063  &17.7749  &22.5294   \\
32   &3.1336  & 6.3135   &9.5285  &12.8170  &16.2084  &19.7245   \\
64   &3.1308  & 6.2907   &9.4503  &12.6275  &15.8287  &19.0608   \\\hline
Order  &1.94  & 1.96   &1.98  & 2.00  & 2.00  & 1.99    \\\hline
Exact &3.1299  &  6.2831   & 9.4248  & 12.5664  & 15.7080   &18.8496  \\
\hline
\end{tabular}
\label{tabla1}
\end{center}
\end{table}

The order of convergence is clearly $\mathcal{O}(h^{2})$,
which is expectable according to Theorem~\ref{cotadoblepandeo}
and due the smoothness of the eigenfunctions for this configuration
of the problem. Moreover, the nature of the meshes and 
the fact that we are allowing small edges for the polygons,
does not affect the order of convergence and no spurious eigenvalues were found.

In the next test, we will study the effects of the
stabilization~\eqref{20} in the computation of the spectrum.
We will consider the same physical configuration as in the previous test. 
Since the stabilization depends on the size of the 
element $K$ (see~\eqref{20}), we will compute the
first six eigenvalues for different values $h_K^{\alpha}$
using the family of meshes $\CT_h^2$.

%a fixed mesh, which in Table \ref{tablan} is particularly $\CT_h^2$.
%\subsection{Stability}

\begin{table}[H]
\begin{center}
\caption{Test2. The lowest computed eigenvalues $\l_{hi}$, $1\le i\le6$
for different $h_{\E}^{\alpha}$, $\alpha=\{1/2, 3/4, 1, 5/4, 3/2\}$ 
with meshes $\CT_{h}^{2}$.}
\vspace{0.3cm}
\begin{tabular}{|c|c|c|c|c|c|c|c|}
\hline
$h_{\E}^{\alpha}$& $N$ & $\l_{h1}$ & $\l_{h2}$ & $\l_{h3}$ & 
 $\l_{h4}$ & $\l_{h5}$& $\l_{h6}$  \\
\hline
&8 &   3.1906  &  6.8000  & 11.2387 &  17.2492  & 24.8335  & 34.3717\\
 &  16 &    3.1456 &   6.4174 &   9.8977 &  13.7232 &  18.0174 &  22.9781\\
$h_{\E}^{1/2} $   & 32 &    3.1338 &   6.3154  &  9.5349 &  12.8325  & 16.2400  & 19.7798\\
  & 64 &    3.1308  &  6.2909  &  9.4511  & 12.6295  & 15.8327  & 19.0678\\
 \hline
 & Order   &2.00   &2.02  & 2.04  & 2.08  & 2.07  & 2.06  \\ \hline
& Exact &3.1299  &  6.2831 &   9.4248 &  12.5664 &  15.7080 &  18.8496  \\\hline
$h_{\E}^{\alpha}$& $N$ & $\l_{h1}$ & $\l_{h2}$ & $\l_{h3}$ & 
 $\l_{h4}$ & $\l_{h5}$& $\l_{h6}$  \\
\hline
&8 &   3.1875 &   6.7730 &  11.1386  & 16.9596  & 24.2790 &  33.4292\\
 &  16 &  3.1451 &   6.4133 &   9.8829 &  13.6861 &  17.9400 &  22.8346\\
$h_{\E}^{3/4} $   &  32  &  3.1337 &   6.3148 &   9.5331 &  12.8282  & 16.2313 &  19.7645\\
 &  64   & 3.1308  &  6.2908  &  9.4509  & 12.6290  & 15.8317  & 19.0662\\ \hline
&  Order   &1.98   &  2.00  &          2.02 &            2.05 &             2.04&             2.03\\\hline
& Exact &3.1299  &  6.2831 &   9.4248 &  12.5664 &  15.7080 &  18.8496  \\
 \hline
 $h_{\E}^{\alpha}$& $N$ & $\l_{h1}$ & $\l_{h2}$ & $\l_{h3}$ & 
 $\l_{h4}$ & $\l_{h5}$& $\l_{h6}$  \\
\hline
&8    &3.1820  & 6.7247  &10.9616  &16.4556  &23.3067  &31.7699  \\
&16   &3.1441  & 6.4043   &9.8511  &13.6063  &17.7749  &22.5294   \\
$h_{E} $&32   &3.1336  & 6.3135   &9.5285  &12.8170  &16.2084  &19.7245   \\
&64   &3.1308  & 6.2907   &9.4503  &12.6275  &15.8287  &19.0608   \\\hline
& Order  &1.94  & 1.96   &1.98  & 2.00  & 2.00  & 1.99    \\\hline
& Exact  &3.1299  &  6.2831   & 9.4248  & 12.5664  & 15.7080   &18.8496  \\
\hline
$h_{\E}^{\alpha}$& $N$ & $\l_{h1}$ & $\l_{h2}$ & $\l_{h3}$ & 
 $\l_{h4}$ & $\l_{h5}$& $\l_{h6}$  \\
\hline
&8    &3.1728   & 6.6440   &10.6696  & 15.6472  & 21.7299  & 29.0670\\
 & 16  &  3.1420 &   6.3861 &   9.7866 &  13.4455 &  17.4448  & 21.9242\\
$h_{\E}^{5/4} $    & 32  &  3.1332 &   6.3101 &   9.5170 &  12.7889 &  16.1512 &  19.6248\\
  & 64  & 3.1307   & 6.2901   & 9.4483  & 12.6228  & 15.8194   &19.0444\\\hline
& Order  &1.88  &         1.90 &            1.91&            1.93 &             1.92 &             1.91    \\\hline
& Exact  &3.1299  &  6.2831   & 9.4248  & 12.5664  & 15.7080   &18.8496  \\
\hline
$h_{\E}^{\alpha}$& $N$ & $\l_{h1}$ & $\l_{h2}$ & $\l_{h3}$ & 
 $\l_{h4}$ & $\l_{h5}$& $\l_{h6}$  \\
\hline
&8&   3.1585   & 6.5223 &  10.2377  & 14.5052 &  19.4815 &  25.2098\\
  & 16&   3.1382 &   6.3532 &   9.6706 &  13.1590 &  16.8635 &  20.8743\\
$h_{\E}^{3/2} $   &32 &   3.1323  &  6.3026 &   9.4912  & 12.7263  & 16.0248 &  19.4044\\
  & 64 &  3.1305  &  6.2885  &  9.4430  & 12.6098  & 15.7937  & 18.9992\\\hline
&Order  &1.81   &  1.83   &          1.83 &             1.83 &             1.82 &              1.81\\\hline
&Exact  &3.1299  &  6.2831   & 9.4248  & 12.5664  & 15.7080   &18.8496  \\
\hline
\end{tabular}
\label{tablan}
\end{center}
\end{table}

We observe from the results of Table~\ref{tablan} that the method
converges to the exact eigenvalues with an optimal
quadratic order and no spurious eigenvalues were found for any chosen
stability parameter $h_K^{\alpha}$. We remark that these results
are also valid for other type of family polygonal meshes allowing small edges.

%does 
%the proposed stabilization parameter odes not affect
%in the convergence order of our the proposed method
%when small edges are considered. In fact, the eigenvalues
%converge to the exact ones. Clearly, the convergence is independent
%of the size of $h_K^{\alpha}$. Also, spurious modes are not
%observable when $\alpha$ changes. We remark that this results
%are also valid for other type of meshes allowing small edges.

%Finally in  Figure \ref{FIG:modes} we present plots for the first three eigenfunctions for the sloshing problem, computed with $\CT_h^3$ and $h_{E}^{3/4}$.
%\begin{figure}[H]
%	\begin{center}
%		\begin{minipage}{15cm}
%			\centering\includegraphics[height=4.5cm, width=4.5cm]{tablasymallas/modo1.eps}
%			\centering\includegraphics[height=4.5cm, width=4.5cm]{tablasymallas/modo2.eps}
%                          \centering\includegraphics[height=4.5cm, width=4.5cm]{tablasymallas/modo3.eps}
%                           %\centering\includegraphics[height=4.5cm, width=4.5cm]{tablasymallas/thao4.eps}
%                         %   \centering\includegraphics[height=4.5cm, width=4.5cm]{tablasymallas/thao5.eps}
%                           % \centering\includegraphics[height=4.5cm, width=4.5cm]{tablasymallas/thao3T.eps}
%               \end{minipage}
%		\caption{Sloshing modes: $w_{h1}$ (left), $w_{h2}$ (middle) and $w_{h3}$ (right) computed with $\CT_h^3$, $N=16$ and $h_{E}^{3/4}$.
%}
%		\label{FIG:modes}
%	\end{center}
%\end{figure}

\subsection{Rotated T domain}

In the following test we will consider a non-convex
domain which we call \emph{rotated T} an it is defined
by $\Omega_T:=(-0.5, 0.5)\times(-0.5, 0)\cup (-0.25,0.25)\times(0,1)$
with boundary condition $\Gamma_0=\partial \O_T$.
This non-convex domain presents two reentrant angles of the same size
$\omega=\frac{3\pi}{2}$ (cf. Figure~\ref{FIG:meshes2}),
and as a consequence, the eigenfunctions
of this problem may present singularities.
%
%(cf. Figure~\ref{FIG:meshes2}),
%\FL{and} as a consequence,
%we obtain \FL{non-smooth} eigenfunctions.
More precisely,
%the size of the reentrant angle are
%$\omega_1=\omega_2=\frac{3\pi}{2}$, then
the Sobolev exponent for the eigenfunctions
is $2/3$ (cf. Remark~\ref{angleome}), so that the eigenfunctions
will belong to $\HurO$ for all $r<2/3$, but in general not to $H^{1+\frac{2}{3}}(\O)$.
Therefore, according to Theorem~\ref{cotadoblepandeo},
the convergence rate for the eigenvalues should be $\vert\lambda-\lambda_h\vert\approx h^{4/3}$.

In Figure~\ref{FIG:meshes2}, we present the meshes
that we will consider for this numerical test.
We note that the families of polygonal meshes $\CT_h^3$, $\CT_h^4$ and $\CT_h^5$
have been obtained by gluing two different polygonal meshes at $x=0$.
It can be seen that very small edges compared with the element
diameter appears on the interface of the resulting meshes.

\begin{figure}[H]
\begin{center}
\begin{minipage}{15cm}
\centering\includegraphics[height=4.5cm, width=4.5cm]{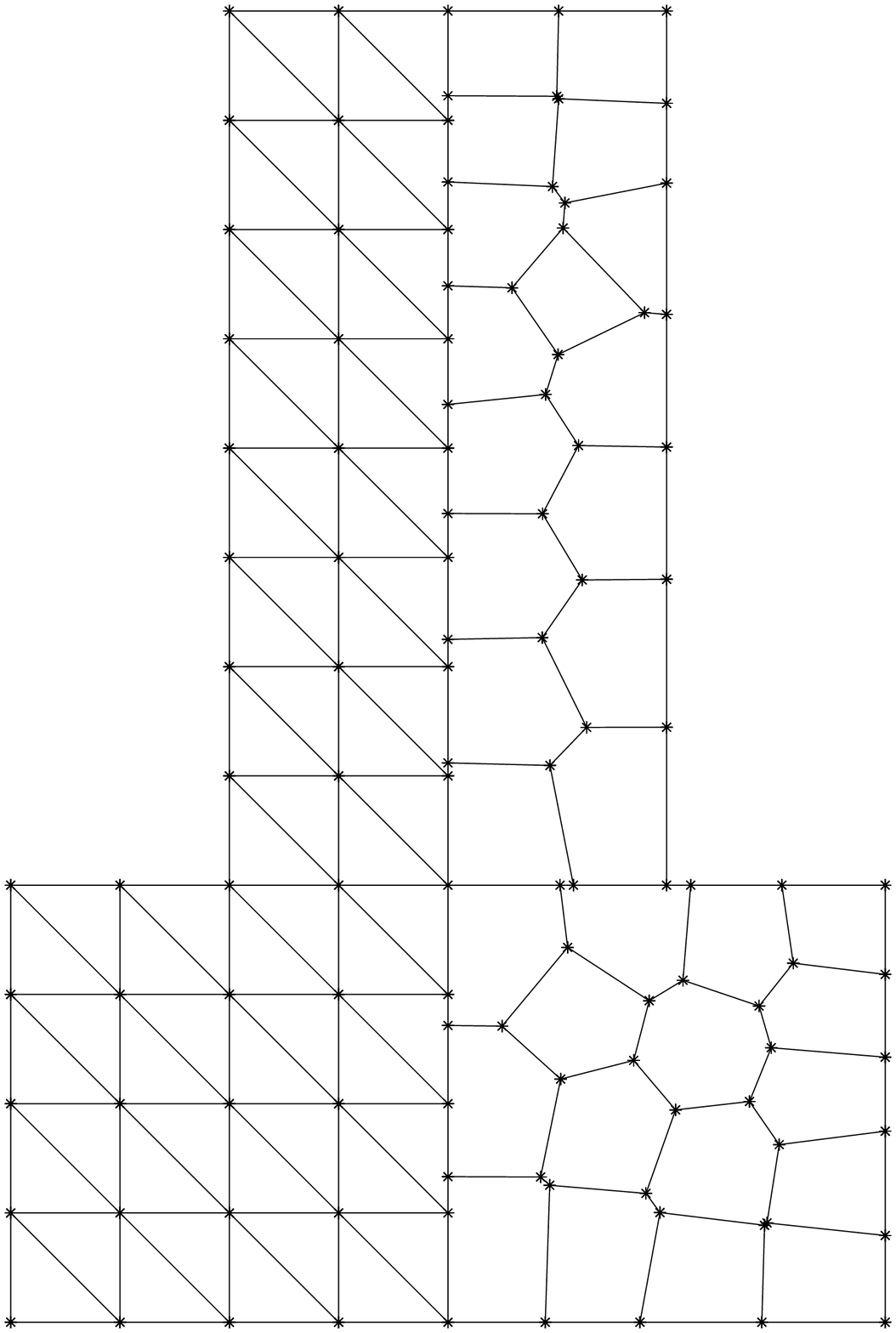}
\centering\includegraphics[height=4.5cm, width=4.5cm]{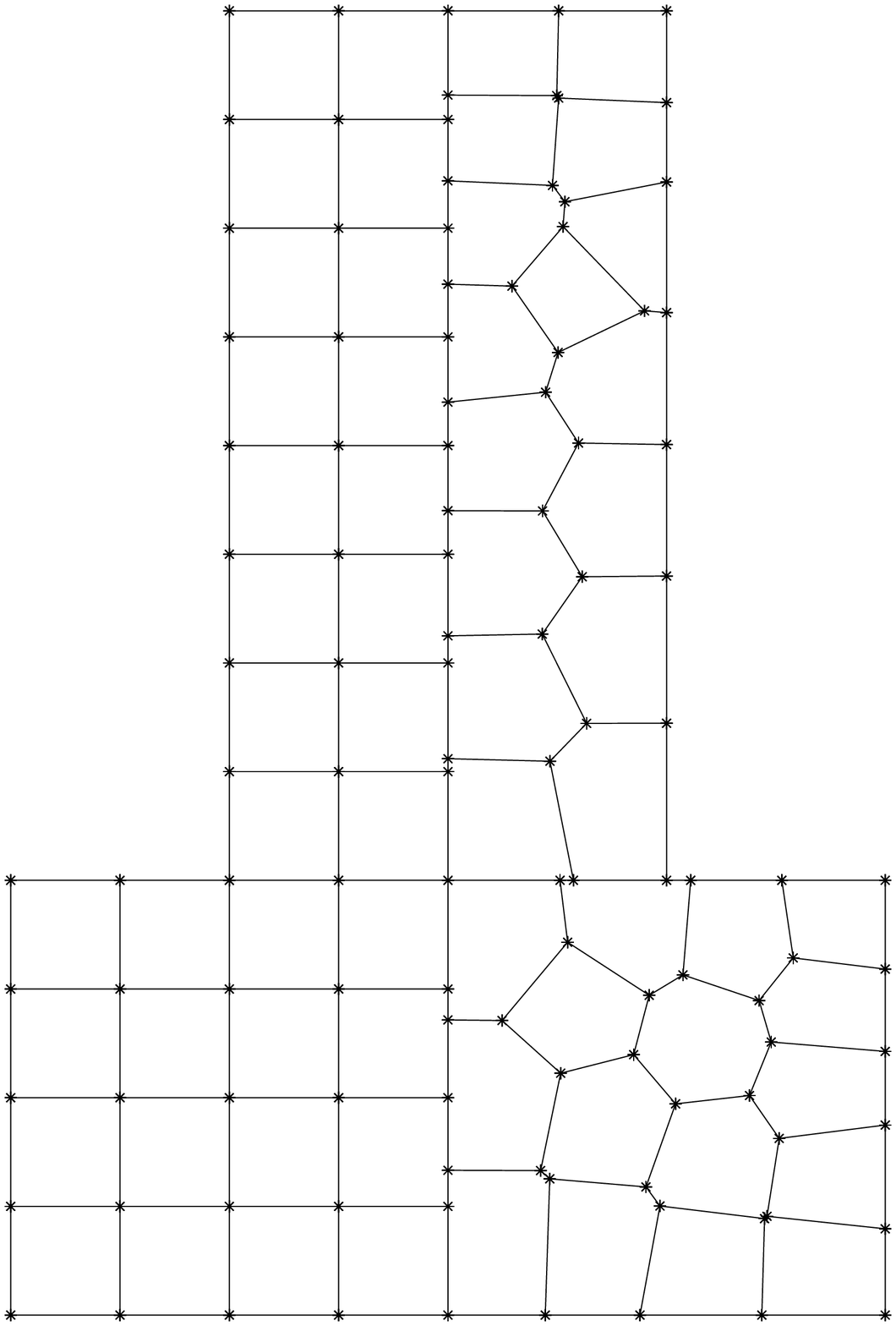}
\centering\includegraphics[height=4.5cm, width=4.5cm]{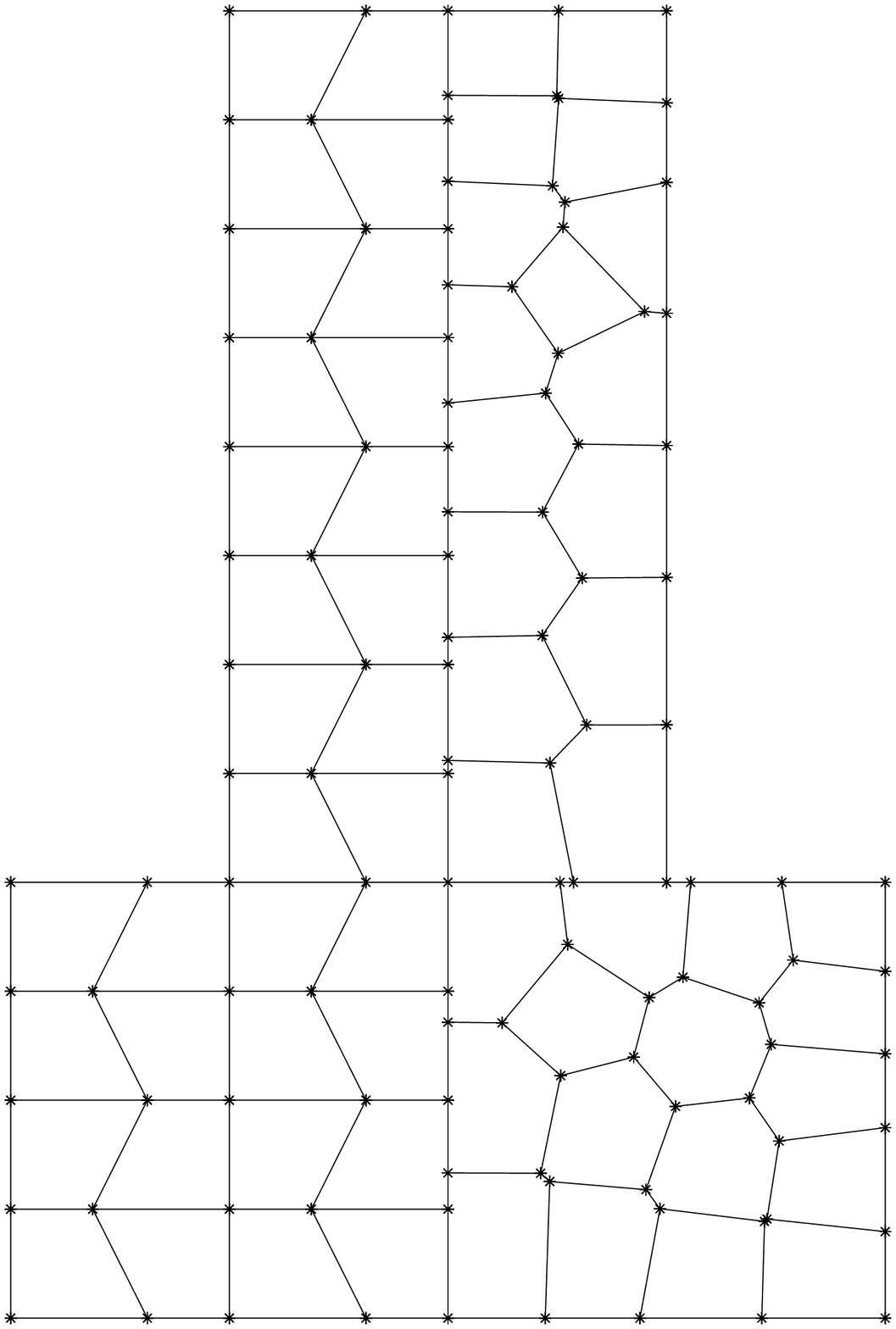}
%   \centering\includegraphics[height=4.5cm, width=4.5cm]{tablasymallas/VorVor.eps}
%   \centering\includegraphics[height=4.5cm, width=4.5cm]{tablasymallas/thao5.eps}
% \centering\includegraphics[height=4.5cm, width=4.5cm]{tablasymallas/thao3T.eps}
\end{minipage}
\caption{Sample meshes with small edges. From left to right: $\CT_h^3$, $\CT_h^4$ and $\CT_h^5$, for $N=8$.}
\label{FIG:meshes2}
\end{center}
\end{figure}

In Table~\ref{tabla3}, we report the computed eigenvalues
and the corresponding convergence rates,
the last row, that we called 'Extrap.', corresponds
to extrapolated values obtained with a least square fitting of the form
\begin{equation}
\label{eq:fitting}
\lambda_{hi}\approx\lambda_{i}+C_ih^{\alpha_i},
\end{equation}
where $\alpha_i$ is the approximated rate of convergence
of each $\lambda_i$, with $i\in\mathbb{N}$.

\begin{table}[H]
\begin{center}
\caption{Test 2. Computed lowest eigenvalues $\l_{h}^{i}$, $1\le
i\le4$, on different polygonal meshes.}
\vspace{0.3cm}
\begin{tabular}{|c|c|cccc|c|c|c|}
\hline
$\CT_h$   & $\l_{h}^{i}$ & $N=16$ & $N=30$ & $N=62$ & $N=130$ &
Order & Extrap. \\
\hline
          & $\l_{h}^{1}$ &0.5196  & 0.5157  & 0.5140 & 0.5134 &   1.41 &  0.5130  \\
$\CT_h^3$ & $\l_{h}^{2}$ & 1.2743 &  1.2622 & 1.2570 &  1.2552  &  1.48  & 1.2543\\
         & $\l_{h}^{3}$& 2.5567 & 2.5263 & 2.5146  & 2.5111  &   1.65 &  2.5096\\
           & $\l_{h}^{4}$& 3.1923 & 3.1556  & 3.1458 &  3.1437 &  2.19  & 3.1432\\
\hline
          & $\l_{h}^{1}$ &  0.5209   & 0.5163  &  0.5142 &  0.5135   & 1.41 &  0.5131 \\
$\CT_h^4$& $\l_{h}^{2}$ &1.2793 &  1.2641&  1.2577 & 1.2555 &   1.51 & 1.2545 \\
          & $\l_{h}^{3}$&2.5659 &  2.5296  & 2.5158  &  2.5115   & 1.66  & 2.5098 \\
          & $\l_{h}^{4}$&   3.2144&  3.1616  & 3.1474& 3.1441  & 2.16 & 3.1434\\
 \hline
          & $\l_{h}^{1}$ &0.5209   & 0.5163  &   0.5142 &  0.5135 &   1.41 &  0.5131  \\
$\CT_h^5$& $\l_{h}^{2}$  & 1.2795 &   1.2641 & 1.2577   & 1.2555  &   1.52 & 1.2545  \\
          & $\l_{h}^{3}$ &  2.5663 & 2.5296 &2.5158&   2.5115 &   1.66 &   2.5098\\
          & $\l_{h}^{4}$ &   3.2143 &   3.1616 & 3.1474 & 3.1441&   2.16 &  3.1434\\
\hline
%\hline
%$\CT_h$   & $\l_{h}^{i}$ & $N=16$ & $N=28$ & $N=60$ & $N=132$ &
%Order & Extrap. \\
%\hline
%          & $\l_{h}^{1}$ &0.5209  &  0.5164  &0.5142 & 0.5134  &   1.57 &  0.5132  \\
%$\CT_h^7$ & $\l_{h}^{2}$ &1.2786 & 1.2641 & 1.2577 &  1.2554  &  1.66  &  1.2549\\
%         & $\l_{h}^{3}$&2.5688 & 2.5311 &2.5162  &  2.5115  &   1.85 &   2.5107\\
%           & $\l_{h}^{4}$&3.2078 & 3.1593  &3.1465 &  3.1439 &  2.53  & 3.1438\\
%\hline
 \end{tabular}
\label{tabla3}
\end{center}
\end{table}

We observe from Table~\ref{tabla3} that 
for the first Steklov eigenvalue the method
converges with order close to $4/3$ which corresponds
to the Sobolev regularity for the Steklov problem on $\O_T$ (non-convex domain).
We also note that the method converges 
larger orders for the rest of the Steklov eigenvalues.

In Figure~\ref{FIG:Tmodes} we present plots
for the first four eigenfunctions for the Steklov problem in the \emph{rotated T} domain,
computed with $\CT_h^5$ and $N=30$.
\begin{figure}[H]
\begin{center}
\begin{minipage}{5cm}
\centering\includegraphics[height=5cm, width=5cm]{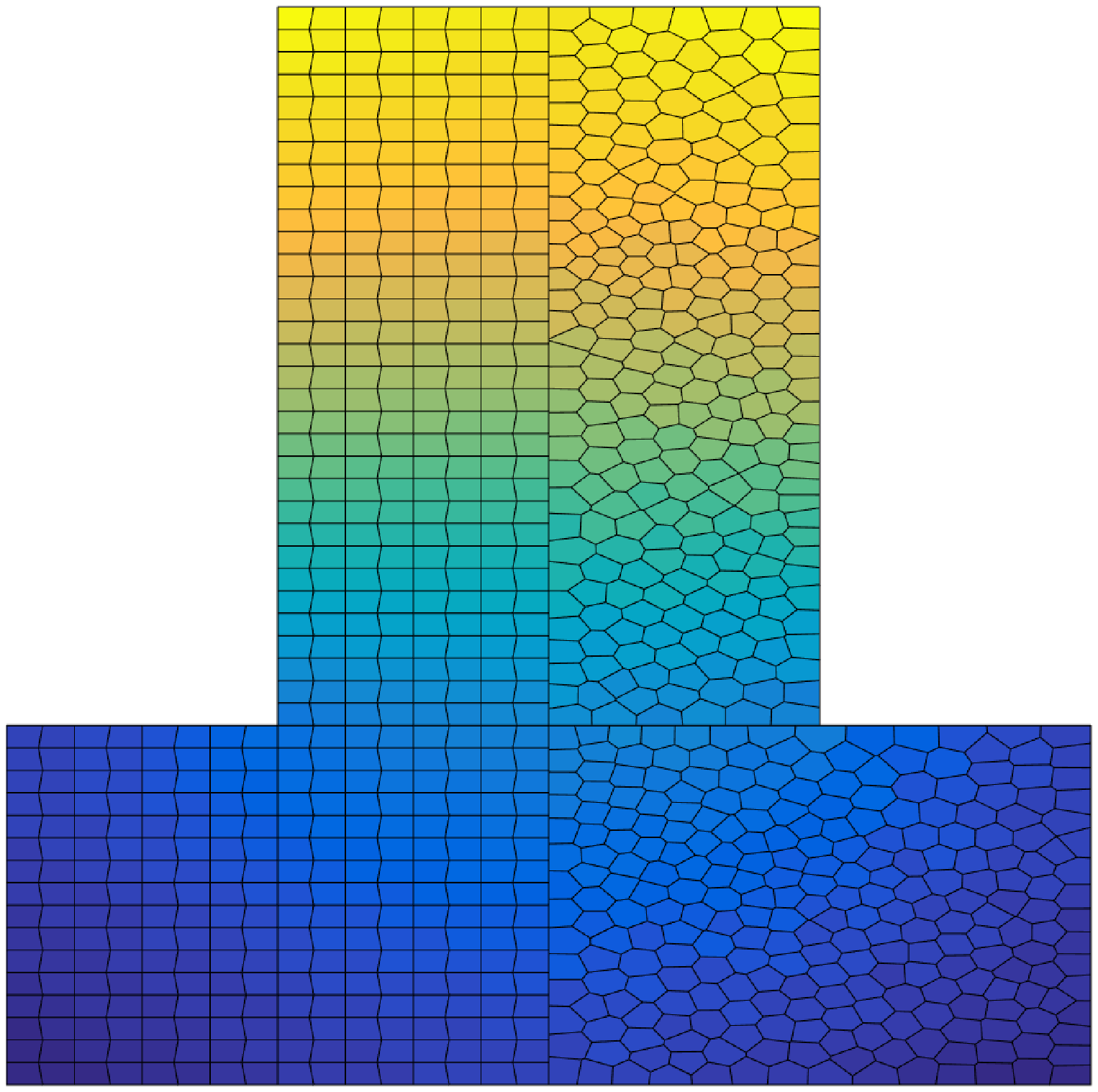}
\end{minipage}
\begin{minipage}{5cm}
\centering\includegraphics[height=5cm, width=5cm]{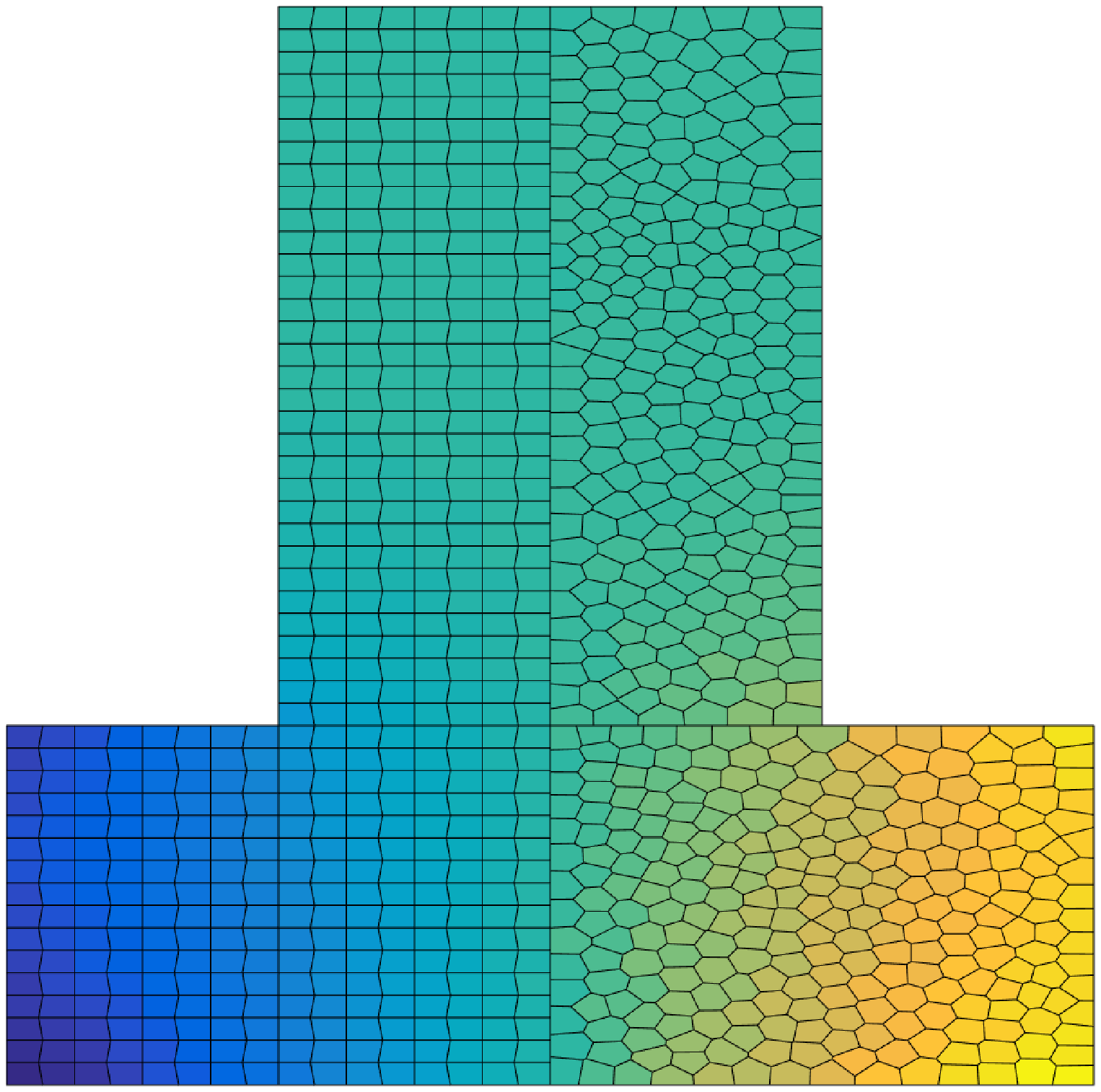}
\end{minipage}
\begin{minipage}{5cm}
\centering\includegraphics[height=5cm, width=5cm]{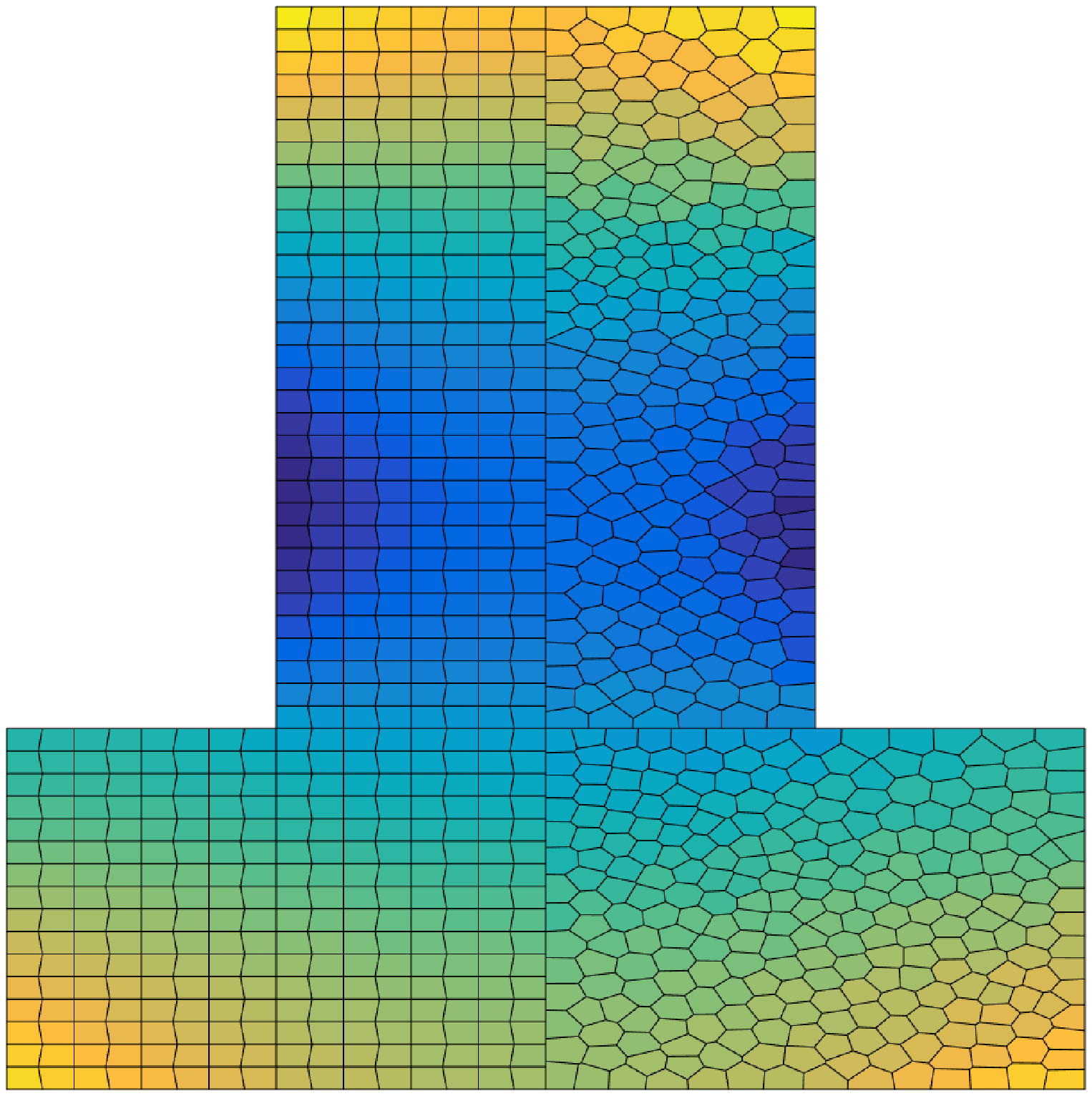}
\end{minipage}
\begin{minipage}{5cm}
\centering\includegraphics[height=5cm, width=5cm]{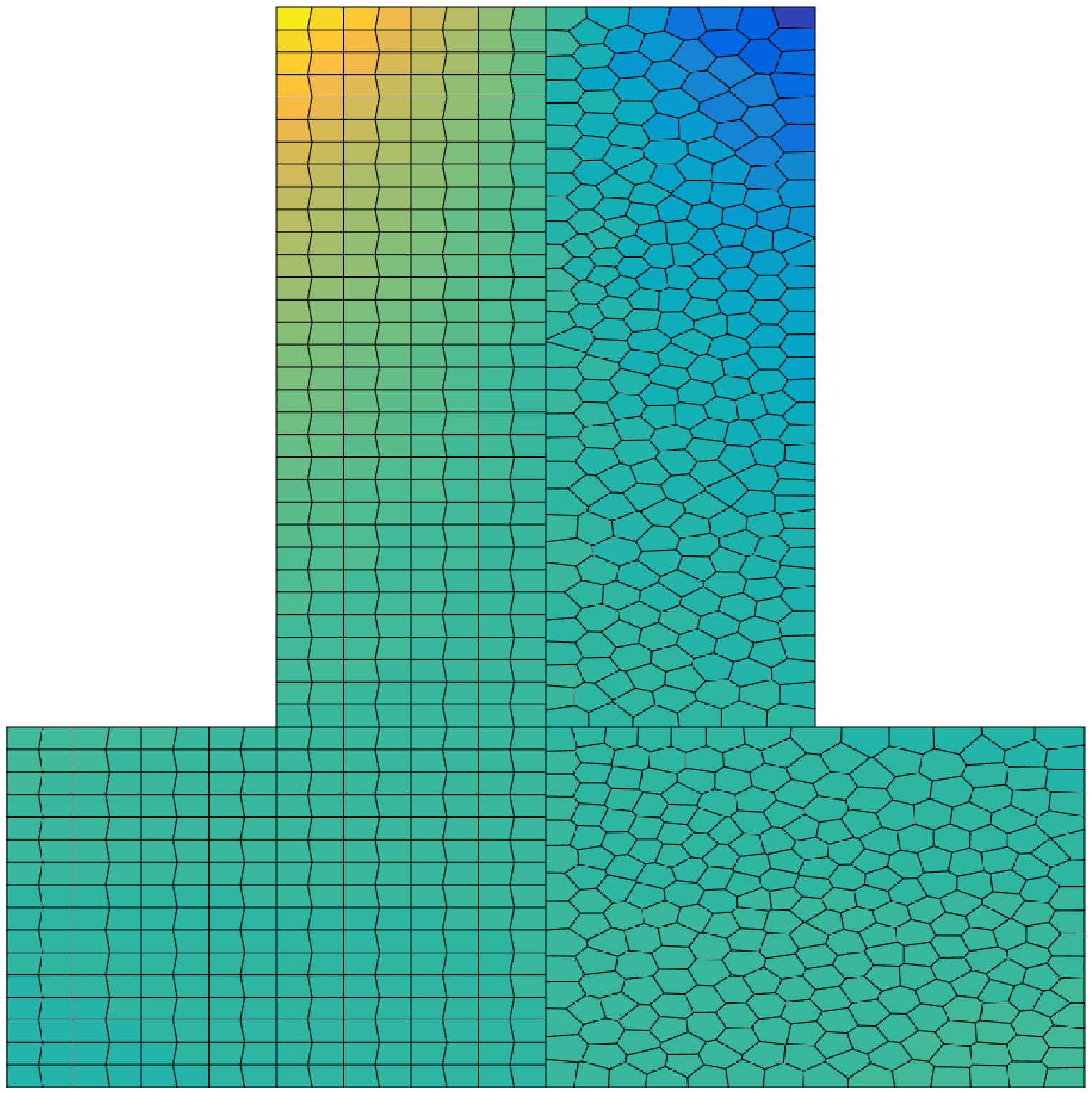}
\end{minipage}
\caption{From top left to bottom right, plots of the first four
eigenfunctions for the \emph{rotated T} domain, computed with $\CT_h^5$. }
		\label{FIG:Tmodes}
	\end{center}
\end{figure}

\subsection{L-shaped domain}
In this numerical example we test the properties of the
proposed method on an L-shaped domain:
$\O_L:=(0,1)\times(0,1)\backslash [0.5,1)\times[0.5,1)$
with $\Gamma_{0}=\partial\O_L$. For this test, we will
adopt a refinement with hanging nodes, which implies to
consider once again polygons with small edges.
More precisely, this test is focused to validate
the use of refined meshes as a tool to handle solutions
with corner singularities. With this purpose, we have
considered two families of meshes, namely: $\CT_h^6$ (see
upper left picture in Figure~\ref{FIG:LCuadRef})
and $\CT_h^{6,\ell}$. The initial uniform mesh $\CT_h^{6}$
has $N=32$ elements on each edge and the last one has $N = 128$
elements on each edge.

%consists in a sequence of uniform quadrilateral meshes.
%The initial uniform mesh has $N=16$  elements on each edge
%of the three squares composing $\Omega_L$ into $16\times 16$
%squares up to the last one that is associated to an
%   analogous $64\times 64$ subdivision.

On the other hand, the mesh $\CT_h^{6,\ell}$ is obtained
by refining a patch around the re-entrant corner of $\Omega_L$,
starting from an initial uniform quadrilateral mesh $\CT_h^{6,0}$,
which corresponds to the first mesh of $\CT_h^6$. 
The procedure consists in to split each element which belongs to the region:

$$R_{\ell}:=\left\{(x,y)\in \mathbb{R}^2:  |x-1/2|\leq \frac{6}{N}2^{1-\ell}
\quad \mbox{and} \quad  |y-1/2|\leq  \frac{6}{N}2^{1-\ell}\right\}
\cap \overline{\Omega}_L \qquad \ell=1,2,\ldots,\widehat{\ell},$$  
into three quadrilaterals by connecting the barycenter of the element
with the midpoint of each edge, where $\widehat{\ell}$ is the number
of meshes to refine, with the convention that $\CT_h^{6,0}:=\CT_h^{6}$
(the initial mesh with $N=32$). Note that although this process
is initiated with a quadrilateral mesh, the successively created meshes
will contain other kind of convex polygons as can be appreciated
in Figure~\ref{FIG:LCuadRef}.

\begin{figure}[H]
	\begin{center}
		\begin{minipage}{5cm}
			\centering\includegraphics[height=5cm, width=5cm]{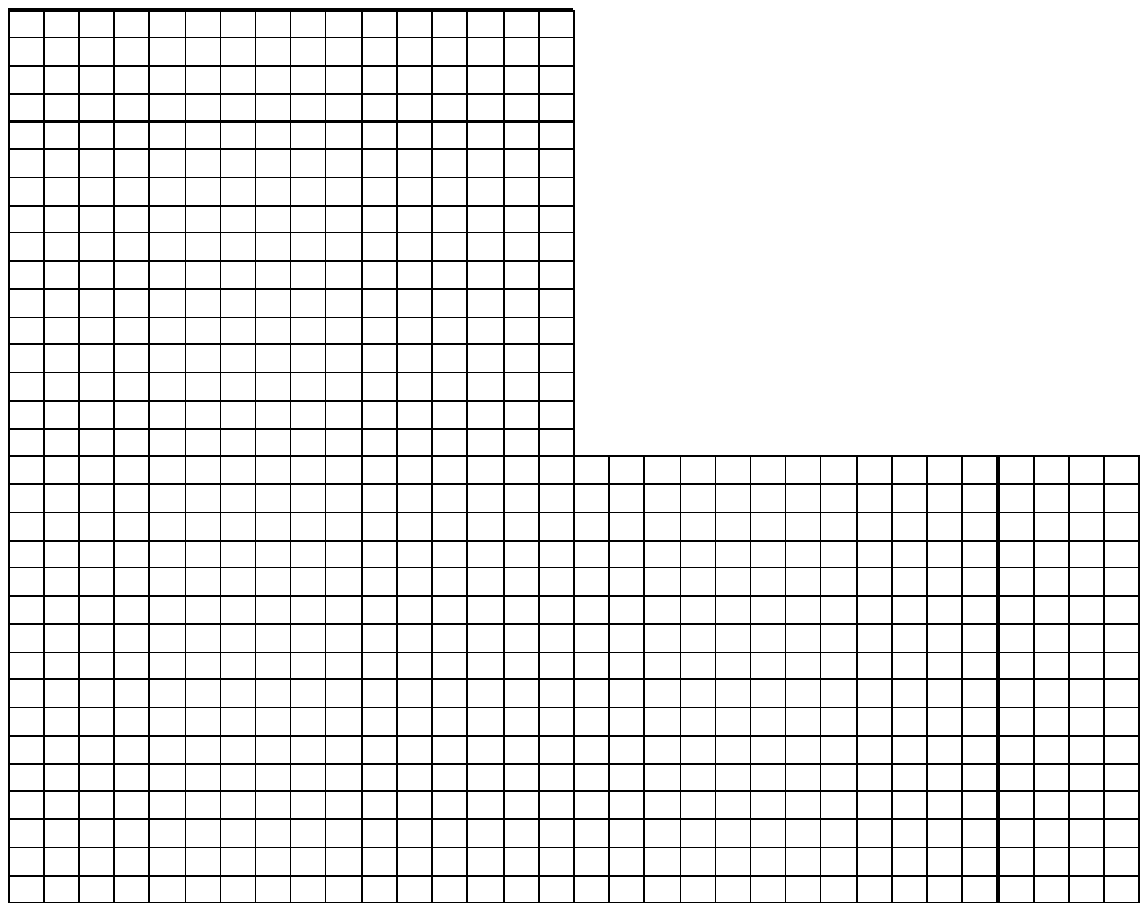}
		\end{minipage}
		\begin{minipage}{5cm}
			\centering\includegraphics[height=5cm, width=5cm]{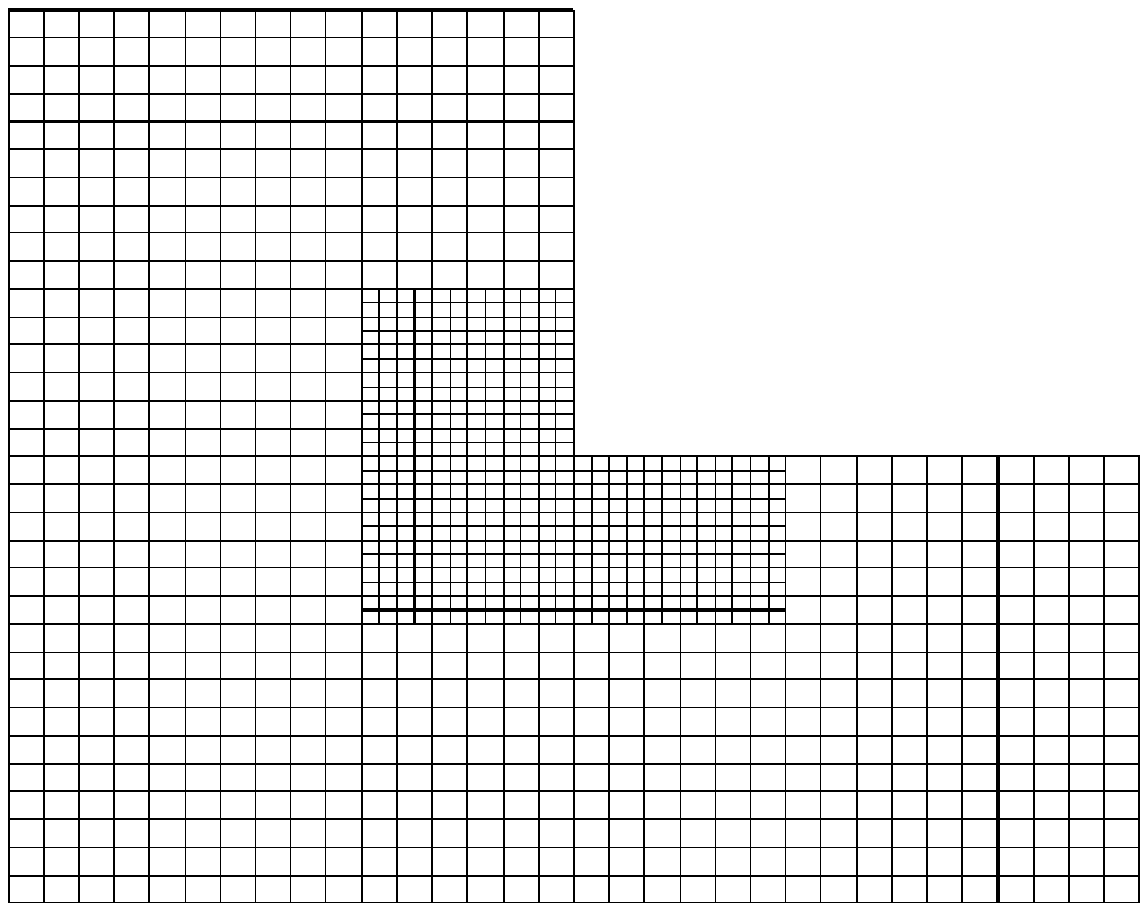}
		\end{minipage}
		\begin{minipage}{5cm}
			\centering\includegraphics[height=5cm, width=5cm]{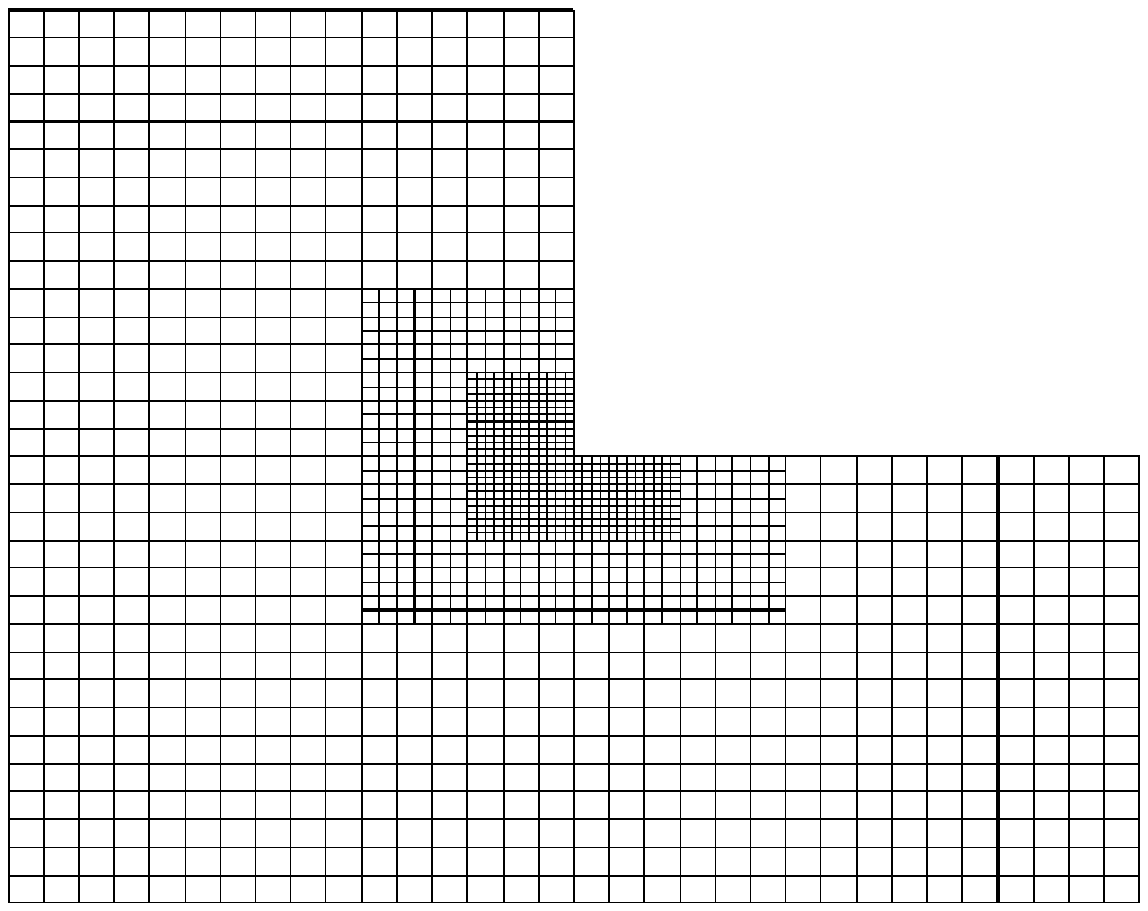}
		\end{minipage}
		\begin{minipage}{5cm}
			\centering\includegraphics[height=5cm, width=5cm]{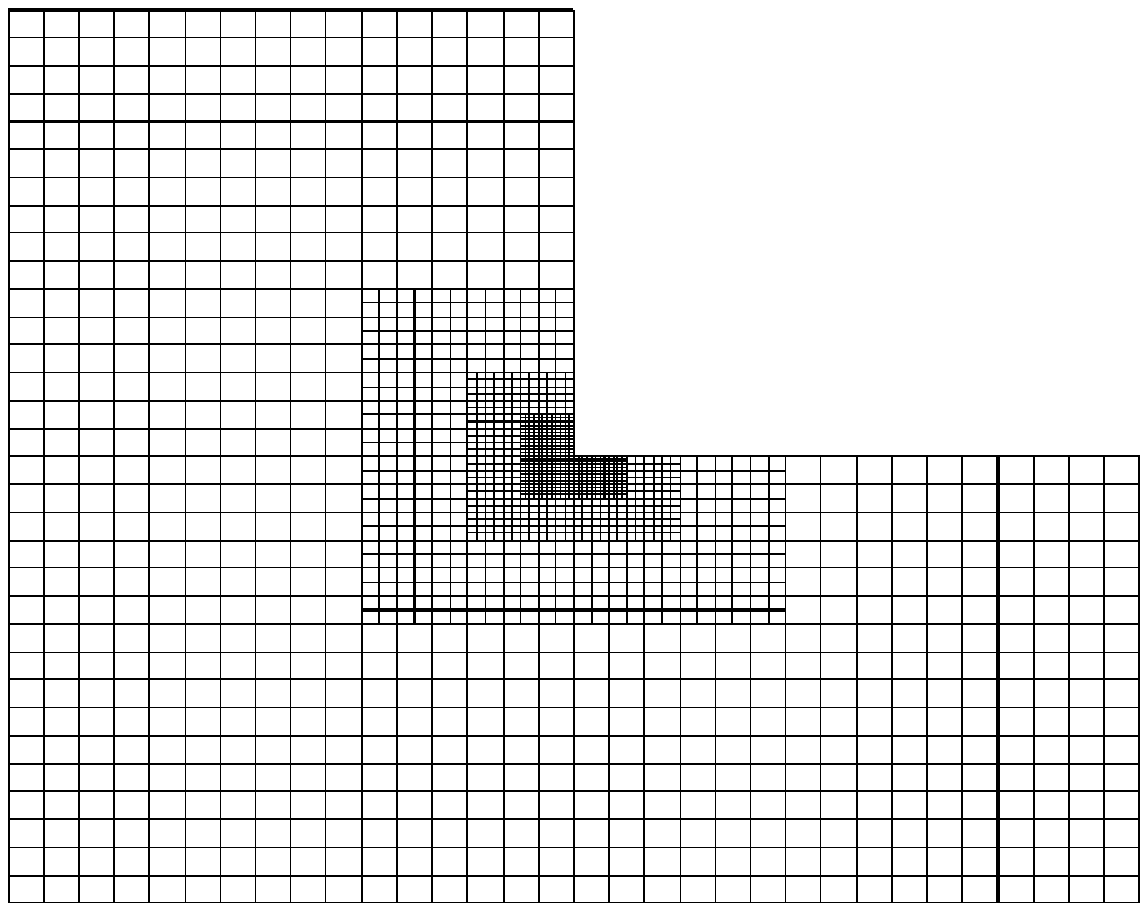}
		\end{minipage}
\caption{Sample meshes: inicial mesh $\CT_{h}^{6}$ with $N=16$ (top left),
	$\CT_{h}^{6,1}$ (top right),  $\CT_{h}^{6,2}$ (bottom left) and $\CT_{h}^{6,3}$ (bottom right).}
		\label{FIG:LCuadRef}
	\end{center}
\end{figure}

%%%\begin{figure}[H]
%%%	\begin{center}
%%%		\begin{minipage}{15cm}
%%%			\centering\includegraphics[height=4.5cm, width=4.5cm]{tablasymallas/LCuadRefInicial.eps}
%%%			\centering\includegraphics[height=4.5cm, width=4.5cm]{tablasymallas/LCuadRef1.eps}
%%%                          \centering\includegraphics[height=4.5cm, width=4.5cm]{tablasymallas/LCuadRef2.eps}
%%%                          \centering\includegraphics[height=4.5cm, width=4.5cm]{tablasymallas/LCuadRef3.eps}
%%%                        %   \centering\includegraphics[height=4.5cm, width=4.5cm]{tablasymallas/VorVor.eps}
%%%                         %   \centering\includegraphics[height=4.5cm, width=4.5cm]{tablasymallas/thao5.eps}
%%%                           % \centering\includegraphics[height=4.5cm, width=4.5cm]{tablasymallas/thao3T.eps}
%%%               \end{minipage}
%%%%		\caption{Sample meshes on an L-shaped domain. From left to right: Initial mesh, mesh 2 and mesh 3.}
%%%\caption{Sample meshes: inicial mesh $\CT_{h}^{6}$ with $N=16$ (top left),
%%%$\CT_{h}^{6,1}$ (top right),  $\CT_{h}^{6,2}$ (bottom left) and $\CT_{h}^{6,3}$ (bottom right).}
%%%		\label{FIG:meshes2}
%%%	\end{center}
%%%\end{figure}

\begin{table}[H]
\begin{center}
\caption{Test 3. Test with an L-shaped domain. Number of degrees of freedom,
computed lowest eigenvalue and error, by using uniform square
meshes and polygonal meshes with hanging nodes.}
\vspace{0.3cm}
\begin{tabular}{|c|c|c|c|c|c|}
\hline
$\CT_h$   & Dofs&$\l_{h}^{1}$ &Error  \\
\hline
  &  833  &  0.78073215782 &  0.00628166703\\
 $\CT_{h}^{6}$ & 3201  & 0.77689137854&   0.00244088775\\
  & 12545 &0.77539520174 &  0.00094471094\\\hline
    ref. & &   0.77445049080 & -- \\
   \hline
$ \CT_{h}^{6,0}$   &833  & 0.78073215782&  0.00628166703\\
 $\CT_{h}^{6,1} $  &1181 &0.77728198716&  0.00283149637\\
$ \CT_{h}^{6,2} $  &1529  &0.77598279448& 0.00153230369\\
 $\CT_{h}^{6,3} $  &1877  &0.77548305066 & 0.00103255987\\
 $\CT_{h}^{6,4}$  & 2232& 0.77528749982&   0.00083700903\\\hline
    ref. & & 0.77445049080 & -- \\
\hline
%$\CT_h$   & $\l_{h}^{i}$ & $N=16$ & $N=28$ & $N=60$ & $N=132$ &
%Order & Extrap. \\
%\hline
%          & $\l_{h}^{1}$ &0.5209  &  0.5164  &0.5142 & 0.5134  &   1.57 &  0.5132  \\
%$\CT_h^7$ & $\l_{h}^{2}$ &1.2786 & 1.2641 & 1.2577 &  1.2554  &  1.66  &  1.2549\\
%         & $\l_{h}^{3}$&2.5688 & 2.5311 &2.5162  &  2.5115  &   1.85 &   2.5107\\
%           & $\l_{h}^{4}$&3.2078 & 3.1593  &3.1465 &  3.1439 &  2.53  & 3.1438\\
%\hline
 \end{tabular}
\label{tablaL}
\end{center}
\end{table}

Table~\ref{tablaL} reports the lowest Steklov eigenvalue
computed on an L-shaped domain with the method analyzed in this paper
with different polygonal meshes. The table also includes
the corresponding 'Errors' which have been obtained against a reference
value 'ref.' which corresponds to extrapolated values obtained
with a least square fitting on finer uniform meshes.

%Now, because of the exact value of the smallest eigenvalue is not known,
%we report a reference value obtained with the virtual
%element method studied in this paper by using the meshes $\CT_h^6$. 

%Next, we compare the lowest eigenvalue obtained by using
%uniform quadrilateral meshes with those of $\CT_h^{6,\ell}$.

It can be seen from Table~\ref{tablaL} that
%It is possible to observe that
the reported errors are similar
in the last row of each mesh; however,
the dofs in the case of corner-refined meshes are much less
than the case of uniform meshes.
Therefore, we conclude that the possibility of using 
small edges in the polygons of the mesh, 
allow us easier refinements near edges and/or corners
of the domain to handle solutions with
corner singularities.

%more general
%meshes allow us easier refinements near edges and/or corners
%of the domain. Hence, the method has the potential of being
%competitive also in the presence of non-smooth solutions.

%-----------------------------------------------------------------------
\section*{Acknowledgments}
FL was partially supported by
CONICYT-Chile through FONDECYT Postdoctorado project 3190204 (Chile).
DM was partially supported by CONICYT-Chile through FONDECYT project
1180913 (Chile) and by CONICYT-Chile through the project AFB170001
of the PIA Program: Concurso Apoyo a Centros Cient\'ificos y
Tecnol\'ogicos de Excelencia con Financiamiento Basal. GR  was supported by
CONICYT-Chile through FONDECYT project 11170534 (Chile).
IV was partially supported by BASAL project CMM,
Universidad de Chile (Chile).

%-----------------------------------------------------------------------

\bibliographystyle{amsplain}

\begin{thebibliography}{99}

%\bibitem{ahn} 
%\textsc{H. J. Ahn},   
%\textit{Vibrations of a pendulum consisting of a bob
%suspended from a wire: the method of integral equations}, 
%Quart. Appl. Math., 39 (1981), pp. 109--117.


%\bibitem{AMVM2AN18} 
%\textsc{P.F. Antonietti, L. Mascotto and M. Verani}, 
%\textit{A multigrid algorithm for the p-version of the virtual element method},
%ESAIM Math. Model. Numer. Anal., 52 (2018), pp. 337--364.


\bibitem{AN2020} 
\textsc{D. Adak and S. Natarajan},
\textit{Virtual element method for a nonlocal
elliptic problem of Kirchhoff type on polygonal meshes},
Comput. Math. Appl., 79, (2020), pp. 2856--2871.


%\bibitem{ABMS2020} 
%\textsc{V. Anaya, M. Bendahmane, D. Mora and M. Sep\'ulveda},
%\textit{A virtual element method for a nonlocal FitzHugh-Nagumo
%model of cardiac electrophysiology},
%IMA J. Numer. Anal., 40, (2020), pp. 1544--1576.


%\bibitem{ABMV2014} 
%\textsc{P. F. Antonietti, L. Beir\~ao da Veiga, D. Mora and M. Verani},
%\textit{A stream virtual element formulation of the Stokes problem on polygonal meshes},
%SIAM J. Numer. Anal., 52, (2014), pp. 386--404.

\bibitem{ABSV2016} 
\textsc{P. F. Antonietti, L. Beir\~ao da Veiga, S. Scacchi and M. Verani},
\textit{A $C^1$ virtual element method for the Cahn--Hilliard
equation with polygonal meshes},
SIAM J. Numer. Anal., 54, (2016), pp. 36--56.


%\bibitem{AMSLP2018} 
%\textsc{E. Artioli, S. de Miranda, C. Lovadina and L. Patruno},
%\textit{A family of virtual element methods for plane elasticity
%problems based on the Hellinger-Reissner principle},
%Comput. Methods Appl. Mech. Engrg., {\bf 340}, (2018), pp. 978--999. 

\bibitem{A_M2AN2004} 
\textsc{M. G. Armentano}, 
\textit{The effect of reduced integration in the
Steklov eigenvalue problem},
ESAIM Math. Model. Numer. Anal., 38, (2004), pp. 27--36.

\bibitem{AP_APNUM2009} 
\textsc{M. G. Armentano and C. Padra}, 
\textit{A posteriori error estimates for the Steklov eigenvalue
problem},
Appl. Numer. Math., 58, (2008), pp. 593--601.

\bibitem{BO} 
\textsc{I. Babu\v{s}ka and J. Osborn}, 
\textit{Eigenvalue problems}, 
in \textit{Handbook of Numerical Analysis}, Vol. II, 
P.G. Ciarlet and J.L. Lions, eds., 
North-Holland, Amsterdam, 1991, pp. 641--787.


\bibitem{BRS03} 
\textsc{A. Berm\'udez, R. Rodr\'iguez and D. Santamarina},
\textit{Finite element computation of sloshing modes in containers with 
elastic baffle plates},
Internat. J. Numer. Methods Engrg., 56, (2003), pp. 447--467. 

\bibitem{BBCMMR2013} 
\textsc{L. Beir\~ao da Veiga, F. Brezzi, A. Cangiani, G. Manzini, L. D.
Marini and A. Russo},
\textit{Basic principles of virtual element methods},
Math. Models Methods Appl. Sci., 23, (2013), pp. 199--214.


%\bibitem{BBMR2014} 
%\textsc{L. Beir\~ao da Veiga, F. Brezzi, L. D. Marini and A. Russo},
%\textit{The Hitchhiker's guide to the virtual element method},
%Math. Models Methods Appl. Sci., 24 (2014), pp. 1541--1573.

\bibitem{BDR2017} 
\textsc{L. Beir\~ao da Veiga, F. Dassi and A. Russo},
\textit{High-order virtual element method on polyhedral meshes},
Comput. Math. Appl., 74, (2017), pp. 1110--1122.



\bibitem{BLMbook2014} 
\textsc{L. Beir\~ao da Veiga, K. Lipnikov and G. Manzini},
\textit{The Mimetic Finite Difference Method for Elliptic Problems},
Springer, MS\&A, vol. {\bf 11}, 2014.

\bibitem{BLM2015} 
\textsc{L. Beir\~ao da Veiga, C. Lovadina and D. Mora},
\textit{A virtual element method for elastic and
inelastic problems on polytope meshes},
Comput. Methods Appl. Mech. Engrg., 295, (2015) pp. 327--346.

\bibitem{BLR17} 
\textsc{L. Beir\~ao da Veiga, C. Lovadina and A. Russo},
\textit{Stability analysis for the virtual element method},
Math. Models Methods Appl. Sci., 27, (2017), pp. 2557--2594.

\bibitem{BLV-M2AN} 
\textsc{L. Beir\~ao da Veiga, C. Lovadina  and G. Vacca},
\textit{Divergence free virtual elements for the Stokes
problem on polygonal meshes},
ESAIM Math. Model. Numer. Anal., 51, (2017), pp. 509--535.



%\bibitem{BeiraoVEMAcoustic2017}
%{\sc L. Beir\~ao da Veiga, D. Mora, G. Rivera, and R. Rodr\'iguez},
%{\it A virtual element method for the acoustic vibration problem},
%Numer. Math., 136, (2017), pp. 725--763.

  
\bibitem{BM20} 
\textsc{L. Beir\~ao da Veiga and G. Vacca},
\textit{Sharper error estimates for virtual
elements and a bubble-enriched version},
arXiv:2005.12009 [math.NA].
  
  

\bibitem{BBBPS2016} 
\textsc{M. F. Benedetto, S. Berrone, A. Borio, S. Pieraccini and S. Scial\`o},
\textit{Order preserving SUPG stabilization for the virtual element
formulation of advection--diffusion problems},
Comput. Methods Appl. Mech. Engrg., 311, (2016), pp. 18--40.
  

\bibitem{BrOs1972} 
\textsc{J. H. Bramble and J. E. Osborn}, 
\textit{Approximation of Steklov eigenvalues of non-selfadjoint second
order elliptic operators}, 
in \textit{The Mathematical Foundations of the Finite Element Method
with Applications to Partial Differential Equations}, 
A. K. Aziz, ed., Academic Press, New York, 1972, pp. 387--408.

\bibitem{BS-2008} 
\textsc{S. C. Brenner and R. L. Scott},
\textit{The Mathematical Theory of Finite Element Methods},
Springer, New York, 2008.

\bibitem{BrS}
\textsc{S. C. Brenner and L. Y. Sung},
\textit{Virtual element methods on meshes with small edges or faces},
Math. Models Methods Appl. Sci., 28, (2018), pp. 1291--1336.

%\bibitem{BDGG}
%\textsc{D. Boffi, R. G. Dur\'an, F. Gardini, L. Gastaldi},
%\textit{A posteriori error analysis for nonconforming approximation of multiple eigenvalues}, 
% Math. Methods Appl. Sci.,  40, 2,  (2017), pp. 350?369

% \bibitem{BM20} 
%\textsc{R. B\"urger, S. Kumar, D. Mora, R. Ruiz-Baier, N. Verma},
%\textit{Virtual element methods for the three-field
%formulation of time-dependent linear poroelasticity},
%arXiv:1912.06029 [math.NA].


\bibitem{CG2017} 
\textsc{E. C\'aceres and G. N. Gatica},
\textit{A mixed virtual element method for the
pseudostress-velocity formulation of the Stokes problem},
IMA J. Numer. Anal., 37, (2017), pp. 296--331.

\bibitem{CM79} 
\textsc{J. Canavati and A. Minsoni},
\textit{A discontinuous Steklov problem with an application to water waves},
J. Math. Anal. Appl., 69 (1979), pp. 540--558.

\bibitem{CGH14} 
\textsc{A. Cangiani, E. H. Georgoulis and P. Houston},
\textit{$hp$-version discontinuous Galerkin methods
on polygonal and polyhedral meshes},
Math. Models Methods Appl. Sci., 24, (2014), pp. 2009--2041.


\bibitem{CGPS2017}
\textsc{A. Cangiani, E. H. Georgoulis, T. Pryer and O.J. Sutton},
\textit{A posteriori error estimates for the virtual element method},
Numer. Math., 137, (2017), pp. 857-893.



%\bibitem{CMS2016} 
%\textsc{A. Cangiani, G. Manzini and O.J. Sutton},
%\textit{Conforming and nonconforming virtual element methods for elliptic problems},
%IMA J. Numer. Anal., {\bf 37}, (2017), pp. 1317--1354.


\bibitem{CGMMV} 
\textsc{O. \v{C}ert\'ik,  F. Gardini, G. Manzini, L. Mascotto  and G. Vacca},
\textit{The p- and hp-versions of the virtual element
method for elliptic eigenvalue problems},
Comput. Math. Appl., 79, (2020), pp. 2035--2056.

%\bibitem{CGMV} 
%\textsc{ O. \v{C}ert\'ik,   F. Gardini, G. Manzini, and G. Vacca},
%\textit{The Virtual Element Method for Eigenvalue Problems with Potential Terms on Polytopic Meshes},
%Appl. Math., 63,  (2018), pp. 333--365.





\bibitem{CY96} 
\textsc{Y. S. Choun and C. B. Yun},
\textit{Sloshing characteristics in rectangular tanks with a submerged block},
Comput. Struct., 61, (1996), pp. 401--413.


\bibitem{DrAA2011} 
\textsc{A. Dello Russo and A. Alonso},
\textit{A posteriori error estimates for nonconforming approximations of
Steklov eigenvalue problems},
Comput. Math. Appl., 62, (2011), pp. 4100--4117.




\bibitem{DDbook2020} 
\textsc{D. Di Pietro and J. Droniou},
\textit{The Hybrid High-Order Method for Polytopal Meshes - Design, Analysis and Applications},
Springer, MS\&A, vol. {\bf 19}, 2020.



\bibitem{GM-ima2011} 
\textsc{E. M. Garau and P. Morin}, 
\textit{Convergence and quasi-optimality of adaptive FEM for Steklov
eigenvalue problems},
IMA J. Numer. Anal., 31, (2011), pp. 914--946.

\bibitem{GMV} 
\textsc{F. Gardini, G. Manzini,  and G. Vacca},
\textit{The nonconforming virtual element method for eigenvalue problems},
ESAIM Math. Model. Numer. Anal., 53, (2019), pp. 749--774.





\bibitem{GV} 
\textsc{F. Gardini  and G. Vacca},
\textit{Virtual element method for second-order elliptic eigenvalue problems},
IMA J. Numer. Anal. 38, (2018), pp. 2026-2054.




\bibitem{GR} 
\textsc{V. Girault and P. A. Raviart},
\textit{Finite Element Methods for Navier-Stokes Equations},
Springer-Verlag, Berlin, 1986.

\bibitem{G} 
\textsc{P. Grisvard}, 
\textit{Elliptic Problems in Non-Smooth Domains}, 
Pitman, Boston, 1985.

\bibitem{K} 
\textsc{T. Kato}, 
\textit{Perturbation Theory for Linear Operators},
Springer Verlag, Berlin, 1995.

\bibitem{LLX_apmath2013} 
\textsc{Q. Li, Q. Lin and H. Xie},
\textit{Nonconforming finite element approximations of the Steklov
eigenvalue problem and its lower bound approximations}, 
Appl. Math., 58, (2013), pp. 129--151.

\bibitem{LSTJSC19} 
\textsc{J. Liu, J. Sun and T. Turner},
\textit{Spectral indicator method for a non-selfadjoint
Steklov eigenvalue problem},
J. Sci. Comput., 79, (2019), pp. 1814--1831.


\bibitem{MPP2018} 
\textsc{L. Mascotto, I. Perugia and A. Pichler},
\textit{Non-conforming harmonic virtual element method:
$h$- and  $p$- versions},
J. Sci. Comput., 77, (2018), pp. 1874--1908.

%\bibitem{MZM} 
%\textsc{J. Meng, Y. Zhang and L. Mei},
%\textit{A virtual element method for the Laplacian eigenvalue problem in mixed form}, 
%Appl. Numer. Math., 156, (2020), pp. 1--13.


\bibitem{MM} 
\textsc{J. Meng and L. Mei},
\textit{A linear virtual element method
for the Kirchhoff plate buckling problem}, 
Appl. Math. Lett., 103, (2020), 106188, 8 pp.


%\bibitem{MM2} 
%\textsc{J. Meng and L. Mei},
%\textit{Discontinuous Galerkin methods of the non-selfadjoint Steklov eigenvalue problem in inverse scattering}, 
% Appl. Math. Lett., 103 (2020), 106188, 8 pp.




\bibitem{MR2019}
{\sc D. Mora and G. Rivera},
\textit{A priori and a posteriori error estimates for a
virtual element spectral analysis for the elasticity equations},
IMA J. Numer. Anal., 40 (2020), pp. 322--357.

\bibitem{MRR1}
{\sc D. Mora, G. Rivera and R. Rodr\'iguez}, {\it A virtual element method
for the Steklov eigenvalue problem},
Math. Models Methods Appl. Sci., 25, (2015), pp. 1421--1445.

\bibitem{MRR2}
{\sc D. Mora, G. Rivera and R. Rodr\'iguez},
{\it A posteriori error estimates for a virtual elements
method for the Steklov eigenvalue problem},
Comp. Math. Appl.,  74, (2017), pp. 2172--2190.
 
 
 
 \bibitem{MV2}
{\sc D. Mora and I. Vel\'asquez}, {\it Virtual element for the
buckling problem of Kirchhoff-Love plates},
Comput. Methods Appl. Mech. Engrg., 360, (2020), 112687, 22 pp.

  \bibitem{PPR15} 
\textsc{I. Perugia, P. Pietra and A. Russo},
\textit{A plane wave virtual element method for the Helmholtz problem},
ESAIM Math. Model. Numer. Anal., 50, (2016), pp. 783--808.
  
  \bibitem{RW} 
\textsc{S. Rjasanow and S. Wei\ss er},
\textit{Higher order BEM-based FEM on polygonal meshes},
SIAM J. Numer. Anal., 50, (2012), pp. 2357--2378.
  
\bibitem{ST04} 
\textsc{N. Sukumar and A. Tabarraei},
\textit{Conforming polygonal finite elements},
Internat. J. Numer. Methods Engrg., 61, (2004), pp. 2045--2066.
  
  
\bibitem{YLL_apnum2009} 
\textsc{Y. Yang, Q. Li and S. Li},
\textit{Nonconforming finite element approximations of the Steklov
eigenvalue problem}, 
Appl. Numer. Math., 59, (2009), pp. 2388--2401.

\bibitem{XIMA13} 
\textsc{H. Xie},
\textit{A type of multilevel method for the Steklov eigenvalue problem},
IMA J. Numer. Anal., 34 (2014), pp. 592--608.


%\bibitem{malla} 
%\textsc{C. Talischi, G. H. Paulino, A. Pereira and I. F. M. Menezes},
%\textit{PolyMesher: a general-purpose mesh generator
%for polygonal elements written in Matlab},
%Struct. Multidisc. Optim., 45, (2012), pp. 309--328.


\bibitem{WRR} 
\textsc{P. Wriggers, W. T. Rust and B. D. Reddy},
\textit{A virtual element method for contact},
Comput. Mech., 58, (2016), pp. 1039--1050.


\end{thebibliography}

\end{document}